\newcommand{\changelocaltocdepth}[1]{%
  \addtocontents{toc}{\protect\setcounter{tocdepth}{#1}}%
  \setcounter{tocdepth}{#1}%
}
\def \({\left(}
\def \){\right)}
\def \[{\left[}
\def \]{\right]}
\newcommand{\bPhi}{\bm{\Phi}}
\newcommand{\bQ}{{\textbf {Q}}}
\newcommand{\bO}{{\textbf {O}}}
\newcommand{\bU}{{\textbf {U}}}
\newcommand{\bV}{{\textbf {V}}}
\newcommand{\bY}{{\textbf {Y}}}
\newcommand{\bM}{{\textbf {M}}}
\newcommand{\bW}{{\textbf {W}}}
\newcommand{\bZ}{{\textbf {Z}}}
\newcommand{\bA}{{\textbf {A}}}
\newcommand{\bB}{{\textbf {B}}}
\newcommand{\bX}{{\textbf {X}}}
\newcommand{\bx}{{\textbf {x}}}
\newcommand{\bGamma}{{\boldsymbol{\Gamma}}}
\newcommand{\bzeta}{{\boldsymbol{\zeta}}}
\newcommand{\bz}{{\textbf {z}}}
\newcommand{\bS}{{\textbf {S}}}
\newcommand{\ba}{{\textbf {a}}}
\newcommand{\be}{\begin{equation}}
\newcommand{\ee}{\end{equation}}
\newcommand\smallO{
  \mathchoice
    {{\scriptstyle\mathcal{O}}}
    {{\scriptstyle\mathcal{O}}}
    {{\scriptscriptstyle\mathcal{O}}}
    {\scalebox{.7}{$\scriptscriptstyle\mathcal{O}$}}
  }
\newcommand{\bea}{\begin{align}}
\newcommand{\eea}{\end{align}}
\newcommand{\norm}[1]{\left\lVert#1\right\rVert}
\newtheorem{theorem}{Theorem}[section]
\newtheorem{definition}[theorem]{Definition}
\newtheorem{conjecture}[theorem]{Conjecture}
\newtheorem{lemma}[theorem]{Lemma}
\newtheorem{proposition}[theorem]{Proposition}
\newtheorem{remark}[theorem]{Remark}
\newtheorem{corollary}[theorem]{Corollary}
\DeclareMathAlphabet{\varmathbb}{U}{bbold}{m}{n}
\newcommand{\EE}{\mathbb{E}}
\newcommand{\bbR}{\mathbb{R}}
\newcommand{\bbK}{\mathbb{K}}
\newcommand{\bbN}{\mathbb{N}}
\newcommand{\bbC}{\mathbb{C}}
\newcommand{\out}{{\rm out}}
\newcommand{\opt}{{\rm opt}}
\newcommand{\WR}{{\rm WR}}
\newcommand{\MMSE}{{\rm MMSE}}
\newcommand{\IT}{{\rm IT}}
\newcommand{\FR}{{\rm FR}}
\newcommand{\alg}{{\rm Algo}}
\newcommand{\MSE}{{\rm MSE}}
\newcounter{dummy}
\newcommand\myitem[1][]{\item[#1]\refstepcounter{dummy}\def\@currentlabel{#1}}
\begin{document}
\setcounter{tocdepth}{2}

\title{Phase retrieval in high dimensions:\\Statistical and computational phase transitions}
\author{Antoine Maillard$^{\star,\diamond}$, Bruno Loureiro$^{\dagger,\oplus}$, Florent Krzakala$^{\star,\oplus}$, Lenka Zdeborov\'a$^{\dagger,\otimes}$}
\date{}
\maketitle
{\let\thefootnote\relax\footnote{
\!\!\!\!\!\!\!\!\!\!\!\!\!\!
$\star$ Laboratoire de Physique de l'\'Ecole Normale Sup\'erieure, PSL University, 
CNRS, Sorbonne Universit\'es, Paris, France.\\
$\dagger$ Institut de Physique Th\'eorique, CNRS, CEA, Universit\'e Paris-Saclay, Saclay, France.\\
$\oplus$ IdePHICS laboratory, EPFL, Switzerland. \\
$\otimes$ SPOC laboratory, EPFL, Switzerland. \\
$\diamond$ To whom correspondence shall be sent: \href{mailto:antoine.maillard@ens.fr}{antoine.maillard@ens.fr}
}}
\setcounter{footnote}{0}

\begin{abstract}	
        We consider the \emph{phase retrieval} problem of reconstructing a $n$-dimensional real or complex signal $\bX^{\star}$ from $m$ (possibly noisy) observations 
        $Y_\mu = | \sum_{i=1}^n \Phi_{\mu i} X^{\star}_i/\sqrt{n}|$, for a large class of correlated real and complex random sensing matrices $\bPhi$, in a high-dimensional setting where $m,n\to\infty$ while $\alpha = m/n=\Theta(1)$. 
        First, we derive sharp asymptotics for the lowest possible estimation error achievable statistically 
        and we unveil the existence of sharp phase transitions for the weak- and full-recovery thresholds as a function of the singular values of the matrix $\bPhi$. 
        This is achieved by providing a rigorous proof of a result first obtained by the replica method from statistical mechanics.
        In particular, the information-theoretic transition to perfect recovery for full-rank matrices appears at $\alpha=1$ (real case) and  $\alpha=2$ (complex case).        
        Secondly, we analyze the performance of the best-known polynomial time algorithm for this problem --- approximate message-passing--- establishing the existence of a statistical-to-algorithmic gap depending, again, on the spectral properties of $\bPhi$. 
        Our work  provides an extensive classification of the statistical and algorithmic thresholds in high-dimensional phase retrieval for a broad class of random matrices.
\end{abstract}

\section{Introduction}\label{sec:introduction}
Consider the reconstruction problem of a real or complex signal %
from $m$ observations of its modulus
\begin{align}
    Y_\mu=\Big|\frac{1}{\sqrt{n}} \sum_{i=1}^n \Phi_{\mu i} X^\star_i\Big|, & \qquad \mu=1,\cdots, m,
    \label{eq:intro:task}
\end{align}
\noindent where the $m\times n$ \emph{sensing} matrix $\bPhi \in \bbK^{m \times n}$ is known, with 
$\bX^\star \in \bbK^n$ ($\mathbb{K}\!\in\!\{\bbR,\bbC\}$).
More generally, measurements can be a noisy function of the modulus, for example by an additive Gaussian noise.
This inverse problem, known in the literature under the umbrella of \emph{phase retrieval}, is relevant to a series of signal processing \cite{fienup1982phase,unser1988maximum,dremeau2015reference} and statistical estimation \cite{candes2015phase,candes2015bphase,jaganathan2015phase,waldspurger2015phase} tasks. It appears in setups in optics and crystallography where detectors can often only measure information about the amplitude of signals, thus losing the information about its phase. It is also a challenging example of a non-convex problem and non-convex optimization with a complex loss landscape \cite{netrapalli2013phase,sun2018geometric,NIPS2018_8127}. Here we are interested in understanding the fundamental limitations of phase retrieval. We focus on the following questions:
\begin{enumerate}[label=\roman*),itemsep=0mm,nosep]
    \item What is the \emph{lowest possible error} one can get in estimating the signal $\bX^{\star}$?
    \item What is the \emph{minimal} number of measurements needed to produce an estimator positively correlated with the signal (that is with non-trivial error in the $n,m \to \infty$ limit)?
    \item How to \emph{efficiently} reconstruct $\bX^{*}$ {\it in practice} with a polynomial time algorithm?
\end{enumerate}
 We provide a sharp answer to these questions for a large set of random sensing matrices $\bPhi$ that hold with high probability in the \emph{high-dimensional limit} where $m,n\!\to\!\infty$ keeping the rate $\alpha \!=\!m/n$ fixed.  
 
\paragraph{Main contributions and related work ---}  There has been an extensive amount of work on phase retrieval with random matrices.
The performance of the Bayes-optimal estimator has been heuristically derived for real orthogonally invariant matrices $\bPhi$ and real signals drawn from generic but separable  distributions
\cite{kabashima2008inference,takahashi2020macroscopic}. Results for the i.i.d.\ (real) Gaussian matrix case were rigorously proven in \cite{barbier2019optimal}, where the algorithmic gap is also studied. This analysis was later non-rigorously extended to the case of non-separable prior distributions \cite{aubin2019exact}. The weak-recovery transition discussed here was studied in detail in \cite{Mondelli_2018,luo2019optimal} for i.i.d.\ Gaussian matrices, while the case of unitary-column matrices was discussed in \cite{ma2017orthogonal,ma2019spectral,dudeja2020analysis}. Our analysis extends these results by considering arbitrary matrices with orthogonal or unitary invariance properties, encapsulating all the cases described above. Message passing algorithms, in particular the generalized vector-approximate message-passing (G-VAMP), have been studied in \cite{rangan2017vector, schniter2016vector}. In the present setting these algorithms are conjectured to be optimal among all polynomial-time ones. To test the performance of the G-VAMP algorithm, we used the TrAMP library \cite{baker2020tramp} that provides an open-source implementation.
In the present work we derive sharp asymptotics for the lowest possible estimation error achievable statistically and algorithmically,  locate the phase transitions for weak- and full-recovery as a function of the singular values of the matrix $\bPhi$ and also discuss the existence of a statistical-to-algorithmic gap. Our main contributions are:
\begin{itemize}[leftmargin=*,nosep]
    \item  We extend the results of \cite{takahashi2020macroscopic} to the complex case, 
by using the heuristic replica method from statistical physics to derive a unified single-letter formula for the performance of the Bayes-optimal estimator 
    under a separable signal distribution $P_{0}$, and for $\bPhi$ taken from a right-orthogonally (unitarily in the complex case) invariant ensemble with arbitrary spectrum. 
    \item We rigorously prove the aforementioned formula in two particular cases. First, when the distribution $P_0$ is Gaussian (real or complex) and $\bPhi = \bW \bB$ is the product of a Gaussian matrix $\bW$ with an arbitrary matrix $\bB$. Second, for a Gaussian matrix $\bPhi$ (real or complex) with any separable distribution $P_0$. These are non-trivial extensions of the the proofs of \cite{barbier2019optimal,barbier2018mutual,aubin2018committee,barbier2019adaptive}.
    \item In the $n,m \to \infty$ limit, with $\alpha = m/n = \Theta(1)$, we identify (as a function of the singular values distribution of $\bPhi$) the \emph{algorithmic weak-recovery} threshold $\alpha_\mathrm{WR,\alg}$ above which better-than-random inference reconstruction of $\bX^\star$ is  possible in polynomial time. 
    \item We establish the \emph{information-theoretic full recovery} threshold $\alpha_{\mathrm{FR,IT}}$ above which full reconstruction of $\bX^\star$ (meaning that the recovery is perfect up to the possible rank deficiency of $\bPhi$) is statistically possible, as a function of the singular values distribution of $\bPhi$.
    \item We provide a measure of the intrinsic algorithmic hardness of phase retrieval by studying the performance of the G-VAMP algorithm, which can be rigorously tracked for orthogonally (unitarily) invariant $\bPhi$ \cite{rangan2017vector, schniter2016vector}. 
    We use this rigorous analysis to numerically establish the existence or absence of a statistical-to-algorithmic gap for reconstruction in the following cases $\bPhi\in\{\text{real/complex Gaussian, orthogonal/unitary, product of complex Gaussians}\}$, for which such an analysis was, to the best of our knowledge, lacking.
\end{itemize}
Our findings for the statistical and algorithmic thresholds are summarized in Table \ref{table:results}, for different real and complex ensembles of $\bPhi$.
Entries in bold emphasize new results obtained in this manuscript, filling a gap between the different previous works in the phase retrieval literature.

Throughout the manuscript we adopt the following notation. Let $\beta \in \{1,2\}$. We denote $\bbK = \bbR$ if $\beta = 1$ and $\bbK = \bbC$ if $\beta = 2$. $\mathcal{U}_\beta(n)$ denotes the orthogonal (respectively unitary) group. For $m \geq n$, a matrix $\bA \in \bbK^{m \times n}$ is said to be \emph{column-orthogonal (unitary)} if $\bA^\dagger \bA = \mathbbm{1}_n$.
For $x,y \in \bbK$, we define a `dot product' as 
$x \cdot y \equiv xy$ if $\bbK = \bbR$ and $x \cdot y \equiv \mathrm{Re}[\overline{x}y]$ if $\bbK = \bbC$.
In particular $x \cdot x = |x|^2$.
The Gaussian measure $\mathcal{N}_\beta(0,1)$ is defined as
${\cal D}_\beta z \equiv (2 \pi/\beta)^{-\beta/2} \ \exp(-\beta |z|^2 / 2) \ \mathrm{d}z$ and
$\text{D}_{\text{KL}}$ is the Kullback–Leibler divergence.  
$\nu$ will denote the asymptotic spectral density of $\bPhi^\dagger \bPhi / n$ and we designate $\langle f(\lambda)\rangle_\nu \equiv \int \nu(\mathrm{d}\lambda) f(\lambda)$ the linear statistics of $\nu$.
\begin{table}[t]
    \centering
\begin{tabular}{|c|c|c|c|}
  \hline
   Matrix ensemble and value of $\beta$ & $\alpha_{ \mathrm{WR}, \alg}$ & $\alpha_{\mathrm{FR,IT}}$ & $\alpha_{\mathrm{FR,Algo}}$ \\
  \hline
  Real Gaussian $\bPhi$ ($\beta = 1$) & $0.5$ \cite{Mondelli_2018,luo2019optimal} & $1$ \cite{candes2006near} & $\simeq 1.12$ \cite{barbier2019optimal} \\
  Complex Gaussian $\bPhi$ ($\beta = 2$)& $1$ \cite{Mondelli_2018,luo2019optimal} & $\bm{2}$ & $\bm{\simeq 2.027}$ \\
  Real column-orthogonal $\bPhi$ ($\beta = 1$)& $\bm{1.5}$ & $1$ \cite{candes2006near} & $\bm{\simeq 1.584}$ \\
  Complex column-unitary $\bPhi$ ($\beta = 2$)& $2$ \cite{ma2017orthogonal,ma2019spectral} & $\bm{2}$ & $ \bm{\simeq 2.265}$ \\ 
  $\bPhi = \bW_1 \bW_2$ ($\beta = 1$, aspect ratio $\gamma$) & $\gamma/(2(1+\gamma))$ \cite{aubin2019exact} & $\min(1,\gamma)$ \cite{candes2006near} & Thm.~\ref{thm:main_product_gaussian_rotinv} \cite{aubin2019exact} \\
  $\bPhi = \bW_1 \bW_2$ ($\beta = 2$, aspect ratio $\gamma$) & $\bm{\gamma/(1+\gamma)}$ & $\bm{\min(2,2\gamma)}$ & \textbf{Thm.~\ref{thm:main_product_gaussian_rotinv}} \\
  $\bPhi$, $\beta \in \{1,2\}$, $\text{rk}[\bPhi^\dagger \bPhi]/n = r$ & \textbf{Eq.~\eqref{eq:wr_threshold_phase_retrieval}}  & $\bm{\beta} \bm{r}$ & \textbf{Conj.~\ref{conjecture:replicas_general}} \\
  Gauss. $\bPhi$, $\beta \in \{1,2\}$, symm. $P_0$, $P_\mathrm{out}$ & Eq.~\eqref{eq:wr_threshold_gaussian_matrix} \cite{Mondelli_2018,luo2019optimal} & \textbf{Thm.~\ref{thm:main_product_gaussian_rotinv}} & \textbf{Thm.~\ref{thm:main_product_gaussian_rotinv}} \\
  $\bPhi$, $\beta \in \{1,2\}$, symm. $P_0$, $P_\mathrm{out}$ & \textbf{Eq.~\eqref{eq:wr_threshold_general}} & \textbf{Conj.~\ref{conjecture:replicas_general}}  & \textbf{Conj.~\ref{conjecture:replicas_general}} \\
  \hline
\end{tabular}
\vspace{0.3cm}
    \caption{
    Values of the algorithmic weak recovery, information-theoretic full recovery, and algorithmic full recovery thresholds for several random matrix ensembles. When the ensemble of $\bPhi$ is not specified, we consider any right-orthogonally (unitarily) invariant ensemble with well-defined asymptotic spectral density.
    The last two lines are given for any symmetric (cf eq.~\eqref{eq:def_symmetry}) prior $P_0$ and channel $P_\mathrm{out}$, while all other results are for Gaussian $P_0$ and a noiseless phase retrieval channel. We reference results of this manuscript when the value is not given by a closed-form expression, but can be computed from the formulas herein.
    In some particular ensembles, we have numerically analyzed these equations in Section~\ref{sec:numerical}.
    The new results obtained in our work are written in bold style, and we give references to papers in which the previously known thresholds were computed. 
    }
    \label{table:results}
\end{table}
\vspace{-0.3cm}
\paragraph{Some consequences of our results ---} 
We list here some interesting (and often surprising) consequences of our analysis. Since our rigorous results concern a subclass of orthogonally invariant matrices, proving and/or interpreting these statements more generally is an interesting future direction.
\begin{itemize}[leftmargin=*,nosep]
    \item 
 One sees from eq.~\eqref{eq:wr_threshold_general} that maximizing $\alpha_{\WR, \alg}$ implies maximizing $\langle \lambda \rangle_\nu^2 / \langle \lambda^2 \rangle_\nu$.
The highest ratio is reached when $\nu$ is a delta distribution: for any symmetric channel and prior (see~\eqref{eq:def_symmetry}) the 
  ensemble that maximizes $\alpha_{\WR,\alg}$ is thus the one of uniformly-sampled
column-orthogonal ($\beta\!=\!1$) or column-unitary ($\beta\!=\!2$) matrices. Conversely, $\alpha_\mathrm{WR,\alg}$ can be made arbitrarily small using a product of many Gaussian matrices, both in the real and complex cases. 
\item In complex noiseless phase retrieval the information-theoretic weak-recovery threshold for column-unitary matrices is located at $\alpha_{\mathrm{WR}, \IT} = 2$ \cite{ma2019spectral}.
Our results (Table~\ref{table:results}) imply that this corresponds to an ``all-or-nothing'' transition located precisely at $\alpha\!=\!2$.
Moreover, the derivations of $\alpha_{\WR,\alg}$ and $\alpha_{\FR,\IT}$ in Sections~\ref{sec:weak_recovery},\ref{sec:numerical} show that for any complex matrix 
$\alpha_{\mathrm{WR}, \alg}\!=\!2 \langle \lambda \rangle_\nu^2 / \langle \lambda^2 \rangle_\nu\!\leq\!\alpha_\mathrm{FR,IT}\!=\!2 (1-\nu(\{0\}))$, with the equality only being attained for $\nu$ a delta distribution.
Uniformly sampled column-unitary matrices are thus the only right-unitarily invariant complex matrices which present an "all-or-nothing" transition in complex noiseless phase retrieval (for a Gaussian prior).
To the best of our knowledge, this is a first establishment of such a transition in a ``dense'' problem (as opposed to a sparse setting \cite{gamarnik2017high,reeves2019all}). Investigating further the existence of these transitions, e.g.\ as a function of the prior, is left for future work.
\item Consider again noiseless phase retrieval with Gaussian prior. For real orthogonal matrices, one has $\alpha_\mathrm{WR,\alg}\!-\! \alpha_\mathrm{FR,IT}\!=\!0.5\!>\!0$. Since $\alpha_\mathrm{WR,\alg}$ is a smooth function of the eigenvalue density $\nu$, we expect that the inequality holds for many real random matrix ensembles. However, in the complex case, by our previous point, $\alpha_\mathrm{WR,\alg}\!\leq\!\alpha_\mathrm{FR,IT}$. The gap thus only occurs in the real setting.
\end{itemize}
\section{Analysis of information-theoretically optimal estimation}\label{sec:replica_result_and_proof}

The phase reconstruction task introduced in eq.~\eqref{eq:intro:task} belongs to the large class of \emph{generalized linear estimation problems}. In this section, we provide a Bayesian analysis of the statistically optimal estimator $\hat{\bX}_{\opt}\in\mathbb{K}^{n}$ for this general class of problems. In the sections that follow, we draw the consequences for the case of the phase reconstruction problem we are interested in in this manuscript.
\\
In the generalized linear model, the goal is to reconstruct a 
\emph{signal} $\bX^{\star} \in \bbK^n$, with components drawn i.i.d.\ from a fixed prior distribution $P_0$ over $\mathbb{K}$,  from the \emph{observations} $\bY \in \bbR^m$ generated as:
\begin{align}\label{eq:alternative_pout}
    Y_\mu = \varphi_\mathrm{out}\Big(\frac{1}{\sqrt{n}} \sum_{i=1}^n \Phi_{\mu i} X_i^{\star}, A_\mu \Big), \quad 1 \leq \mu \leq m,
\end{align}
\noindent where $(A_\mu)_{\mu=1}^m \in \bbK^m$ are i.i.d.\ random variables with (known) distribution $P_A$ accounting for a possible noise, $\varphi_\mathrm{out}$ is the observation channel and $\bPhi$ is a random matrix with elements in $\bbK$. We let $P_{\out}(\cdot|z)$ denote the probability density function associated to the stochastic function $\varphi_{\out}(z,A)$. Further, we assume that $P_{0}$ has a second moment given by $\rho \equiv \EE[|x|^2] > 0$.
Note that the phase reconstruction problem introduced in eq.~\eqref{eq:intro:task} corresponds to a likelihood $P_{\rm out}(y|z)$ that only depends on $z$ through $|z|^2$.
For instance, for Gaussian additive noise it is explicitly given by $P_{\out}(y|z) =\mathcal{N}_{1}(y;|z|^2,\Delta)$, 
while the noiseless case corresponds to the limit $\Delta\!\downarrow\!0$ : $P_\mathrm{out}(y|z) = \delta(y-|z|^2)$.
In this work, we consider a large class of random matrices $\bPhi$ distributed as $\bPhi \overset{d}{=} \bU \bS \bV^\dagger$, with arbitrary $\bU \in \mathcal{U}_\beta(m)$, $\bV$ drawn uniformly from ${\cal U}_\beta(n)$, and $\bS$ the pseudo-diagonal of singular values of $\bPhi$.
 We assume that the spectral measure of $\bS^\intercal \bS / n$ almost surely converges (in the weak sense)
\footnote{We actually assume the following, which is (slightly) stronger: the convergence should happen 
at a rate at least $n^{1+\epsilon}$ for an $\epsilon > 0$. This condition was not precised in the replica calculation of \cite{takahashi2020macroscopic} for real matrices. In practice, in classical orthogonally (unitarily)-invariant random matrix ensembles, we often have $\epsilon = 1$.}
to a probability measure $\nu$ with compact support $\mathrm{supp}(\nu) \subset \bbR_+$.
Crucially, we assume that the statistician knows how the observations were generated - i.e.\ she has access to $P_{0}, P_{\text{out}}$ and the distribution of $\bPhi$, therefore reducing the problem to the reconstruction of the specific realization of $\bX^{\star}$. 
In this setting, commonly known as \emph{Bayes-optimal}, the statistically optimal estimator $\hat{\bX}$ minimizing the mean-squared error $\text{mse}(\hat{\bX}) \equiv ||\hat{\bX}-\bX^{\star}||^2_{2}$ is simply given by the posterior mean $\hat{\bX}_{\mathrm{opt}} = \mathbb{E}[\bx|\bY]$, where the posterior distribution is explicitly given by:
\begin{align}
    P(\mathrm{d}\bx | \bY) &\equiv \frac{1}{{\cal Z}_n(\bY)} \prod_{i=1}^n P_0(\mathrm{d}x_i) \, \prod_{\mu=1}^m P_\mathrm{out}\Big(Y_\mu \Big| \frac{1}{\sqrt{n}} \sum_{i=1}^n \Phi_{\mu i} x_i\Big).
    \label{eq:posterior}
\end{align}
Exact sampling from the posterior is intractable for large values of $n,m\in\mathbb{N}^\star$. However, certain information theoretical quantities are accessible analytically precisely in this limit. Indeed, our first set of results concerns a rigorous evaluation of the mutual information $I(\bX^{\star};\bY) \equiv \text{D}_{\text{KL}}(P_{X,Y}|P_{0}\otimes P_{Y})$ between the signal $\bX^{\star}$ and the observations $\bY$ for the generalized linear model in the high-dimensional limit of $n,m \to \infty$ with $m/n \to \alpha > 0$ fixed. This quantity fully characterizes the asymptotic performance of the Bayes-optimal estimator $\hat{\bX}_{\opt}$ in high dimensions via the I-MMSE theorem \cite{Guo_2005}.
\paragraph{Asymptotic mutual information and minimum mean-squared error---} The mutual information between the observations and the hidden variables can be decomposed into two terms:
\begin{align}\label{eq:def_mutual_info}
    I(\bX^{\star};\bY | \bPhi) = H(\bY|\bPhi)-H(\bY | \bX^\star,\bPhi).
\end{align}
The entropy $H(\bY|\bX^\star,\bPhi) = \EE \ln P(\bY | \bX^\star,\bPhi) = -m \EE \ln P_\mathrm{out}(Y_1 | (\bPhi \bX^\star)_1/\sqrt{n})$ is easily computed in the high-dimensional limit for a given channel $P_{\mathrm{out}}$:
\begin{align}\label{eq:conditional_entropy}
 \lim_{n \to \infty} -\frac{1}{n} H(\bY|\bX^\star,\bPhi) &
= \alpha \int_\bbR \mathrm{d}y \int_\bbK \mathcal{D}_\beta \xi \ P_\mathrm{out}(y|\sqrt{Q_z}\xi) \ln P_\mathrm{out}(y|\sqrt{Q_z}\xi),
\end{align}
with $Q_z \equiv \rho \langle \lambda \rangle_\nu / \alpha$.
Indeed, as $n \to \infty$, the law of $(\bPhi \bX^\star)_1 / \sqrt{n}$
asymptotically approaches $\mathcal{N}_\beta(0,Q_z)$ by the central limit theorem.
The challenge in computing the mutual information therefore reduces to the evaluation of the \emph{free entropy} $H(\bY | \bPhi) = \mathbb{E}\ln\mathcal{Z}_{n}(\bY)$, related to the log-normalization of the posterior. Our first result is a single-letter formula for the asymptotic free entropy density of right-orthogonally (unitarily) invariant sensing matrices:
\begin{conjecture}\label{conjecture:replicas_general}
Under the assumptions above,
the asymptotic free entropy density for the posterior distribution defined in eq.~\eqref{eq:posterior} with right-orthogonally (unitarily) invariant sensing matrix $\bPhi$ is:
\begin{align}
  \lim\limits_{n\to\infty}\frac{1}{n}\mathbb{E}_{\bY,\bPhi}\ln\mathcal{Z}_{n}(\bY)= \sup_{q_x \in [0,\rho]} \sup_{q_z \in [0,Q_z]} [I_0(q_x) + \alpha I_\mathrm{out}(q_z) + I_\mathrm{int}(q_x,q_z) ], 
  \label{eq:replica:freeen}
\end{align}
\noindent 
\begin{align*}
\text{where~~}   & I_0(q_x) \equiv \inf_{\hat{q}_x \geq 0} \Big[-\frac{\beta \hat{q}_x q_x}{2} +\mathbb{E}_{\xi} \mathcal{Z}_{0}(\sqrt{\hat{q}_{x}}\xi, \hat{q}_{x})\log{\mathcal{Z}_{0}(\sqrt{\hat{q}_{x}}\xi, \hat{q}_{x})}\Big], \\
    &I_{\out}(q_z) \equiv \inf_{\hat{q}_z \geq 0} \Bigg[ -\frac{\beta \hat{q}_z q_z}{2} - \frac{\beta}{2} \ln (\hat{Q}_z + \hat{q}_z) + \frac{\beta \hat{q}_z}{2 \hat{Q}_z}  \notag \\  \hspace{3mm} +  \mathbb{E}_{\xi}& \int_\bbR \mathrm{d}y~ \mathcal{Z}_{\out}\Big(y;\sqrt{\frac{\hat{q}_z}{\hat{Q}_z(\hat{Q}_z + \hat{q}_z)}}\xi, \frac{1}{\hat{Q}_z + \hat{q}_z}\Big)\log{\mathcal{Z}_{\out}\Big(y;\sqrt{\frac{\hat{q}_z}{\hat{Q}_z(\hat{Q}_z + \hat{q}_z)}}\xi, \frac{1}{\hat{Q}_z + \hat{q}_z} \Big)} \Bigg], \\
   & I_\mathrm{int}(q_x,q_z) \equiv \inf_{\gamma_x ,\gamma_z \geq 0} \Big[\frac{\beta}{2} (\rho - q_x) \gamma_x + \frac{\alpha \beta}{2} (Q_z - q_z) \gamma_z - \frac{\beta}{2} \langle \ln (\rho^{-1} + \gamma_x + \lambda \gamma_z) \rangle_\nu \Big] \\
    & \hspace{1cm} - \frac{\beta}{2} \ln(\rho-q_x) - \frac{ \beta q_x}{2 \rho} - \frac{\alpha \beta}{2} \ln(Q_z - q_z) - \frac{\alpha \beta q_z}{2 Q_z}. \nonumber
\end{align*}
We defined $Q_z \equiv \rho \langle \lambda \rangle_\nu/\alpha$ and $\hat{Q}_z \equiv 1/Q_z$, $\xi\sim\mathcal{N}_{\beta}(0,1)$
and the following auxiliary functions:
\begin{align}
    \mathcal{Z}_{0}(b, a) \equiv \mathbb{E}_{z}\big[P_{0}(z)e^{-\frac{\beta}{2}a|z|^2+\beta b\cdot z}\big],&& \mathcal{Z}_{\out}(y;\omega, v) &\equiv \mathbb{E}_{z}\big[ P_{\out}(y\Big|\sqrt{v}z+\omega)\big],\label{eq:auxiliary}
\end{align}
\noindent with $z\sim\mathcal{N}_{\beta}(0,1)$.
Moreover, the asymptotic minimum mean squared error, achieved by the Bayes-optimal estimator, is equal to $\rho - q_x^\star$, with $q_x^\star$ the solution of the above extremization problem;
\begin{align}\label{eq:mmse}
   \lim_{n \to \infty} \mathrm{MMSE} &= \lim_{n \to \infty} \frac{1}{n} \EE \lVert \bX^\star - \hat{\bX}_\mathrm{opt} \rVert^2 = \rho - q_x^\star. 
\end{align}
\end{conjecture}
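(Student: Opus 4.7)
The plan is to attack the conjecture via the adaptive interpolation method of Barbier--Macris \cite{barbier2019optimal,barbier2019adaptive}, restricting attention to the two tractable sub-cases highlighted in the introduction: (a) Gaussian sensing matrix $\bPhi$ (real or complex) with arbitrary separable prior $P_0$ on $\bbK$, and (b) $\bPhi = \bW\bB$ with $\bW$ i.i.d.\ Gaussian, $\bB$ arbitrary, and $P_0$ Gaussian. A non-rigorous derivation by the replica method, generalizing \cite{takahashi2020macroscopic} to the complex case and to arbitrary right-orthogonally (unitarily) invariant $\bPhi$, will serve as the guide for the variational formula and the extremization structure in eq.~\eqref{eq:replica:freeen}.

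First I would establish concentration of $\frac{1}{n}\ln\mathcal{Z}_n(\bY)$ around its mean, with fluctuations of order $n^{-1/2}$, and verify the Nishimori identity for the posterior in eq.~\eqref{eq:posterior} (equivalence in law of a sample from the posterior and the true signal under the joint expectation in $\bY$). In case (a), concentration follows from a Gaussian Poincar\'e inequality applied jointly to $\bPhi$, the noise $\bA$, and the signal $\bX^\star$; in case (b) one conditions on $\bB$ and applies the same tool to $\bW,\bA,\bX^\star$. These steps reduce the problem to computing $\lim_n \frac{1}{n}\EE\ln\mathcal{Z}_n$.

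Second I would set up an interpolating Hamiltonian indexed by $t\in[0,1]$, linking the GLM posterior at $t=0$ to two decoupled scalar inference channels at $t=1$: a signal-side Gaussian channel of effective precision $\hat q_x$ on each $X_i^\star$ (associated with the scalar free entropy $\mathcal{Z}_0$ of eq.~\eqref{eq:auxiliary}), and an output-side Gaussian channel of precision $\hat Q_z+\hat q_z$ on each $Z_\mu=(\bPhi\bX^\star)_\mu/\sqrt{n}$ (associated with $\mathcal{Z}_\mathrm{out}$). Differentiating the interpolating free entropy in $t$ and using the Nishimori identity recasts $\partial_t$ as an overlap fluctuation; an adaptive choice of the interpolation path in the spirit of \cite{barbier2019adaptive} makes these terms non-negative, yielding a Guerra upper bound, with a matching lower bound from the converse construction of \cite{barbier2019optimal,aubin2019exact}. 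The $\sup_{q_x,q_z}\inf_{\hat q_x,\hat q_z,\gamma_x,\gamma_z}$ structure then arises by Legendre duality on the scalar channel free entropies and on the spectral log-determinant. In case (b), I would exploit the Gaussianity of both $\bX^\star$ and $\bW$ to view the problem as an i.i.d.-Gaussian-matrix GLM with a non-separable Gaussian prior of covariance spectrum $\nu$, to which the non-separable-prior machinery of \cite{aubin2019exact} applies; the trace $\langle \ln(\rho^{-1}+\gamma_x+\lambda\gamma_z)\rangle_\nu$ in $I_\mathrm{int}$ emerges as the normalized log-determinant of the Gaussian-prior precision.

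The main obstacle is the spectral interaction term $I_\mathrm{int}$, which has no counterpart in the i.i.d.-Gaussian phase retrieval literature. In cases (a) and (b) it is produced by a tractable Gaussian calculation, because the relevant log-determinant is an explicit random-matrix object. For a general right-orthogonally (unitarily) invariant $\bPhi$, the analogous step would require a Harish-Chandra--Itzykson--Zuber / spherical-integral large-deviation analysis over the Haar group $\mathcal{U}_\beta(n)$, controlling rank-one deformations of rotationally-invariant ensembles; this goes beyond adaptive interpolation and is what keeps the general statement at the level of a conjecture outside cases (a)--(b). A complementary achievability strategy would be to match the upper bound with the state-evolution fixed point of G-VAMP \cite{rangan2017vector,schniter2016vector}, whose solutions coincide with the replica extremizers; turning the G-VAMP tracking into a rigorous lower bound for arbitrary rotationally-invariant $\bPhi$ is, to the best of my understanding, the key missing ingredient for the full conjecture.
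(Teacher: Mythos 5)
Your proposal correctly mirrors the paper's two-level strategy: the replica method (Appendix~\ref{sec:app:replicas}) is how the paper \emph{derives} the conjectured variational formula for general right-orthogonally (unitarily) invariant $\bPhi$, adaptive interpolation \`a la Barbier--Macris proves it rigorously in the two highlighted sub-cases (this is Theorem~\ref{thm:main_product_gaussian_rotinv}, proven in Appendix~\ref{sec:app_proof_theorem}), and the absence of a rigorous large-deviation analysis of spherical (HCIZ-type) integrals over $\mathcal{U}_\beta(n)$ is exactly what keeps the statement a conjecture outside those sub-cases. The one substantive departure is your treatment of case~(b): you cast it as an i.i.d.\ Gaussian GLM with a \emph{non-separable} Gaussian prior of covariance spectrum $\nu$ and plan to invoke the non-separable-prior machinery of \cite{aubin2019exact}, but that reference is an explicitly non-rigorous replica extension and so cannot close the gap to a theorem. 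The paper instead writes the SVD $\bB/\sqrt n=\bU\bS\bV^\dagger$ and uses the orthogonal/unitary invariance of the Gaussian matrix $\bW$ (to absorb $\bU$) together with the rotational invariance of the Gaussian prior $P_0$ (to absorb $\bV$) to reduce to an i.i.d.\ Gaussian GLM with a \emph{separable} but non-i.i.d.\ prior $P_0^{(\bS)}$, $Z_k^\star=S_kX_k^\star$, which is exactly the input that the rigorous interpolation toolbox of \cite{barbier2019optimal,aubin2018committee} is built for; the log-det $\langle\ln(\rho^{-1}+\gamma_x+\lambda\gamma_z)\rangle_\nu$ then arises as the scalar free entropy of the resulting per-coordinate Gaussian denoising channels rather than as a joint log-determinant. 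Your non-separable route would be more general if a rigorous non-separable adaptive interpolation existed, but the paper's SVD reduction is precisely what makes the proof go through with currently available tools, at the cost of requiring Gaussianity of $P_0$ in case~(b).
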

This formula, derived in Appendix \ref{sec:app:replicas} using the heuristic (hence the \emph{conjecture}) replica method from statistical physics \cite{mezard1987spin}, holds for any separable signal distribution $P_{0}$ and for any choice of likelihood $P_{\out}$. It extends the formula from \cite{takahashi2020macroscopic} to complex signals $\bX^\star$ and sensing matrices $\bPhi$. In particular, it also holds in the case of complex matrices $\bPhi$ and real signal $\bX^{\star}$, by adding a constraint on the imaginary part of $\bX^{\star}$ in $P_{0}$. It also encompasses the case of sparse signals, which is of wide interest in the compressive sensing literature \cite{donoho2006compressed, donoho2009message, krzakala2014variational, krzakala2012probabilistic, schniter2014compressive}.
Proving Conjecture~\ref{conjecture:replicas_general} is a challenging open problem. We provide a significant step by proving Conjecture~\ref{conjecture:replicas_general} for a broad class of likelihoods $P_\mathrm{out}$ and in two settings: a restricted signal distribution $P_{0}$ and a broad class of real and complex likelihoods and sensing matrices $\bPhi$, or a broad class of prior distribution $P_0$ and (real or complex) Gaussian $\bPhi$.
\begin{theorem}\label{thm:main_product_gaussian_rotinv}
    Let us denote
  \begin{enumerate}[label=(h\arabic*),leftmargin=*,nosep]
    \myitem[$(H0)$] \label{hyp:channel}
    $\varphi_\mathrm{out}: \bbK^2 \to \bbR$ is $\mathcal{C}^2$, and $(z,a) \mapsto (\varphi_\mathrm{out}(z,a),\partial_z \varphi_\mathrm{out}(z,a),\partial^2_z \varphi_\mathrm{out}(z,a))$ is bounded. 
    \item \label{hyp:prior}
    $P_0$ is a centered Gaussian distribution, without loss of generality $P_0 = \mathcal{N}_\beta(0,1)$.
    \item \label{hyp:phi_product}
    $\bPhi$ is distributed as $\bPhi \overset{d}{=} \bW \bB/ \sqrt{p}$, with $\bW \in \bbK^{m \times p}$ an i.i.d.\ standard Gaussian matrix, and $\bB \in \bbK^{p \times n}$ an arbitrary matrix (random or deterministic), independent of $\bW$. 
    Moreover, as $n \to \infty$, $p / n \to \delta >0$.
    \item \label{hyp:convergence_spectrum_B}
    The empirical spectral distribution of $\bB^\dagger \bB / n$ weakly converges (a.s.) to a compactly-supported measure $\nu_B \neq \delta_0$.
    Moreover, there is $\lambda_\mathrm{max} \geq 0$ such that 
    a.s.\ $\lambda_\mathrm{max}(\bB^\dagger \bB / n) \to_{n \to \infty} \lambda_\mathrm{max}$.
    \myitem[$(h'1)$]\label{hyp:any_p0_gaussian_phi}$P_0$ has a finite second moment, and $\Phi_{\mu i} \overset{\mathrm{i.i.d}}{\sim} \mathcal{N}_\beta(0,1)$.
  \end{enumerate}
  Assume that all \ref{hyp:channel},\ref{hyp:prior},\ref{hyp:phi_product},\ref{hyp:convergence_spectrum_B} or that all \ref{hyp:channel},\ref{hyp:any_p0_gaussian_phi} stand.
  Then Conjecture~\ref{conjecture:replicas_general} holds with $\nu$ the asymptotic eigenvalue distribution of $\bPhi^\dagger \bPhi/ n$\footnote{The rigorous statement on the limit of the MMSE requires adding a side information channel with arbitrarily small signal, cf Appendix~\ref{subsec:app_proof_mmse}.}.
\end{theorem}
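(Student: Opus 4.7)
The plan is to adapt the adaptive interpolation method of Barbier--Macris and its refinement in Barbier--Krzakala--Macris--Miolane--Zdeborov\'a to the two settings covered by the theorem, with non-trivial modifications to handle the complex case, the $\mathcal{C}^2$ channel hypothesis \ref{hyp:channel} (in place of the bounded likelihood usually assumed), and the structured matrix $\bPhi = \bW \bB / \sqrt{p}$.

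\textbf{Case \ref{hyp:channel} + \ref{hyp:any_p0_gaussian_phi} (i.i.d.\ Gaussian $\bPhi$, general prior).} Here I would follow the proof of \cite{barbier2019optimal} essentially line by line, with only two adjustments. First, the output channel in phase retrieval is a function of $\varphi_\mathrm{out}(z,A)$ rather than a bounded density, so the standard concentration lemmas must be re-derived from hypothesis \ref{hyp:channel} (bounded derivatives up to order two give uniform control on the relevant free-energy increments). Second, every step has to be written for $\beta\in\{1,2\}$, tracking the factors of $\beta/2$ that appear in the Gaussian measures and Nishimori identities. The heart of the proof is the construction of an interpolating Hamiltonian between the original model at $t=0$ and two decoupled scalar channels at $t=1$, with interpolating parameters chosen by the adaptive ODE so that Nishimori plus overlap concentration close both the Guerra upper bound and the Aizenman--Sims--Starr-style lower bound, yielding that the limiting free entropy equals the sup over $(q_x,q_z)$ of $I_0(q_x)+\alpha I_\mathrm{out}(q_z)+I_\mathrm{int}(q_x,q_z)$ with $\nu$ the Mar\v{c}enko--Pastur law (for which $I_\mathrm{int}$ reduces to the usual mutual-information form).

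\textbf{Case \ref{hyp:channel}--\ref{hyp:convergence_spectrum_B} (Gaussian prior, $\bPhi = \bW\bB/\sqrt{p}$).} The key reduction exploits \ref{hyp:prior}: writing $\bPhi \bX^\star/\sqrt{n} = \bW (\bB \bX^\star)/\sqrt{p n}$ and setting $\tilde{\bX}^\star \equiv \bB\bX^\star/\sqrt{n}$, one obtains a GLM with \emph{i.i.d.\ Gaussian} sensing matrix $\bW/\sqrt{p}$ and a Gaussian ``effective signal'' $\tilde{\bX}^\star$ of covariance $\bB\bB^\dagger/n$. Using the SVD $\bB = \bU_B \bxigma_B \bV_B^\dagger$ and the rotational invariance of $\mathcal{N}_\beta(0,\mathbbm{1})$, this reduces to a GLM on the left-singular basis with i.i.d.\ Gaussian sensing and an independent (but site-dependent) Gaussian prior of variance $\sigma_i^2/n$ per coordinate, with $\{\sigma_i^2\}$ distributed (empirically) according to $\nu_B$. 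I would then re-run the adaptive interpolation on this reduced problem, promoting the scalar overlap $q_x$ to a function $q_x(\lambda)$ indexed by the singular values of $\bB$; by convexity-in-$\lambda$ arguments and the Nishimori identity, at the saddle point $q_x(\lambda)$ collapses to the form $\lambda \gamma_x / (\rho^{-1}+\gamma_x+\lambda\gamma_z)$ that produces exactly the minimization over $\gamma_x,\gamma_z$ appearing in $I_\mathrm{int}$. The concentration hypotheses \ref{hyp:convergence_spectrum_B} (including the convergence of the maximal eigenvalue) are what guarantee that linear statistics $\langle \cdot \rangle_{\nu_B}$ and $\langle \cdot \rangle_\nu$ are genuine limits of empirical traces, so that the interpolation sum-rules have a deterministic limit.

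\textbf{Matching to Conjecture~\ref{conjecture:replicas_general} and MMSE.} The last step is to verify that the reduced single-letter formula obtained on the $\bB$-side coincides with the one written in terms of $\nu = \mathrm{spec}(\bPhi^\dagger\bPhi/n)$. This is a free-probability computation: $\nu$ is the multiplicative free convolution of $\nu_B$ with a Mar\v{c}enko--Pastur law of parameter $n/p$, and after optimising out the site-dependent overlap one recovers the log-determinant $\langle \ln(\rho^{-1}+\gamma_x+\lambda\gamma_z)\rangle_\nu$ directly (this is the point where the formula rearranges from a form indexed by $\nu_B$ into one indexed by $\nu$). The statement on the MMSE follows from the I-MMSE theorem applied to a perturbed model with a vanishing Gaussian side channel on $\bX^\star$, which also provides the overlap concentration used above.

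\textbf{Main obstacle.} The most delicate step is case \ref{hyp:channel}--\ref{hyp:convergence_spectrum_B}: running the adaptive interpolation with a site-dependent prior and proving the corresponding overlap concentration uniformly in the spectrum of $\bB$. Classical concentration proofs rely on $P_0$ being a fixed product measure; here one must control fluctuations of a vector-valued overlap indexed by $\lambda \in \mathrm{supp}(\nu_B)$, which requires strengthening the sub-Gaussian bounds of \cite{barbier2019adaptive} to uniform bounds in $\lambda$ and using the convergence of $\lambda_\mathrm{max}(\bB^\dagger \bB/n)$ in hypothesis \ref{hyp:convergence_spectrum_B} to rule out outliers. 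The second subtlety is algebraic: identifying the reduced variational formula with the $\nu$-based form of Conjecture~\ref{conjecture:replicas_general}, which is a free-probability exercise but requires some care with the $\alpha/\delta$ accounting when $p \neq m,n$.
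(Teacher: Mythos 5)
Your first case (\ref{hyp:channel}+\ref{hyp:any_p0_gaussian_phi}) and the MMSE step match the paper exactly, and your reduction of the structured-matrix case is identical to the paper's: after the SVD of $\bB/\sqrt n$ and the rotational invariance of the Gaussian prior and of $\bW$, both arrive at a GLM with an $m\times p$ i.i.d.\ Gaussian sensing matrix and an \emph{effective signal} $\bZ^\star\in\bbK^p$ whose coordinates are independent centered Gaussians with variances given by the squared singular values of $\bB/\sqrt n$ (this is the prior $P_0^{(\bS)}$ in the paper; your ``variance $\sigma_i^2/n$'' should read $\sigma_i^2$, a minor slip). Where you then diverge from the paper is the central and, in fact, the harder way around: you propose promoting $q_x$ to a spectrally-indexed \emph{function} $q_x(\lambda)$ and running an adaptive interpolation with a functional overlap, acknowledging yourself that proving overlap concentration uniformly in $\lambda$ is the ``main obstacle.'' The paper avoids this entirely. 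Because $P_0^{(\bS)}$ is still a product measure over coordinates (just not identically distributed, and with uniformly bounded supports by \ref{hyp:convergence_spectrum_B}), the adaptive interpolation is run with a \emph{scalar} Gaussian side channel of SNR $R_1(t,\epsilon)=\epsilon_1+(\int_0^t r_\epsilon)\,\mathbbm 1_\beta$ and the \emph{scalar} overlap $Q=(\bZ^\star)^\dagger\bz/p$; the existing concentration machinery of \cite{barbier2019optimal,barbier2019overlap} applies verbatim. The site-dependence only enters at $t=1$, where the free entropy is an empirical spectral average converging (by \ref{hyp:convergence_spectrum_B}) to $\EE_{\nu_B}\ln(1+rX)$. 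The replacement of the $\nu$-based formula of Conjecture~\ref{conjecture:replicas_general} by the $\nu_B$-based variational principle in terms of $(q,\hat q)$ is done \emph{offline}, as a purely algebraic rewriting (Proposition~\ref{prop:replicas_product_structure} together with the R-transform/Stieltjes identity of Lemma~\ref{lemma:technical_phi_structure}, exploiting the fact that $\nu$ is the free multiplicative convolution of $\nu_B$ with a Mar\v cenko--Pastur law). So your instinct about the free-probability matching is right, but you placed the spectral decomposition inside the interpolation rather than in this separate algebraic step, which is what creates the functional-overlap difficulty. In short: you have the right reduction but the wrong object to interpolate with; switching to the scalar overlap $Q$ for the reduced $\bZ^\star$-problem removes the obstacle you identify and is what makes the proof go through.
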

\noindent
The proof is based on the adaptive interpolation method\footnote{In Theorem~\ref{thm:main_product_gaussian_rotinv}, we rely on some Gaussianity, either in the prior or in the data matrix. This is a not specific to our setting, but rather a fundamental limitation of the adaptive interpolation method used for the proof.} \cite{barbier2019adaptive}, and is provided in Appendix~\ref{sec:app_proof_theorem}.
In particular, Theorem~\ref{thm:main_product_gaussian_rotinv} allows to rigorously compute the asymptotic minimum mean-squared error (MMSE) achieved by the Bayes-optimal estimator.
Theorem~\ref{thm:main_product_gaussian_rotinv} extends the rigorous results of \cite{barbier2019optimal} to a larger class of sensing matrices and to the complex case, including both real orthogonally invariant matrices  and the products of i.i.d.\ Gaussian matrices, heuristically studied respectively in \cite{takahashi2020macroscopic} and \cite{aubin2019exact}. 
\begin{remark}\label{remark:relaxing_hypotheses}
  Following the arguments of \cite{barbier2019optimal,aubin2018committee}, hypothesis \ref{hyp:channel} can be relaxed to continuity a.e.\ and the existence of moments of $\varphi_\mathrm{out}$,
  so that our theorem also covers noiseless phase retrieval.
\end{remark}
This single-letter formula  reduces the high-dimensional computation of the MMSE to a simple low-dimensional extremization problem.
The MMSE as a function of the sample complexity $\alpha$ can be readily computed from eqs.~\eqref{eq:replica:freeen} and \eqref{eq:mmse} for a given signal distribution $P_{0}$ (determining $I_{0}$), likelihood $P_{\text{out}}$ (determining $I_{\text{out}}$) and spectral density $\nu$ (determining $I_{\text{int}}$).
\paragraph{Statistical vs algorithmic performance ---} Conjecture~\ref{conjecture:replicas_general} and Theorem~\ref{thm:main_product_gaussian_rotinv} show that the global maximum of the potential in eq.~\eqref{eq:replica:freeen} describes the performance of the statistically optimal estimator $\hat{\bX}_{\opt}$ for generalized linear estimation. 
Interestingly, eq.~\eqref{eq:replica:freeen} also contains rich information about the algorithmic aspects of this problem. Indeed, it has been shown that the performance of the G-VAMP algorithm, the best-known polynomial time algorithm for this problem,  corresponds precisely to the MSE achieved by running gradient descent on the potential in eq.~\eqref{eq:replica:freeen} from the trivial initial condition $q_{x}=q_{z}=0$ \cite{rangan2017vector, schniter2016vector}. In the sections that follow, we exploit this result to derive the thresholds characterizing the statistical and algorithmic limitations of signal estimation.
We adopt the subscript $\IT$ for the thresholds related to the Bayes-optimal estimator and $\alg$ for the G-VAMP ones\footnote{Even though we do not provide a proof for the optimality of-GVAMP,
we chose such notation in accordance with the previous literature on this topic, in which this optimality is often assumed.}.

\section{Weak-recovery transition}
\label{sec:weak_recovery}
A natural question to ask is: what is the minimum sample complexity $\alpha_{\WR, \alg}\geq 0$ such that for all $\alpha \geq \alpha_{\WR,\alg}$ we can algorithmically reconstruct $\bX^{\star}$ better than a trivial random draw from the known signal distribution $P_{0}$? Also known as the \emph{algorithmic weak-recovery} threshold, $\alpha_{\WR,\alg}$ can also be characterized in terms of the MSE achieved by G-VAMP:
\begin{align*}
    \alpha_{\WR,\alg} \equiv \underset{\alpha \geq 0}{\text{argmin}}\{\MSE_\mathrm{GVAMP}(\alpha) < \rho\}.
\end{align*}
In this section, we establish sufficient conditions for the existence of the algorithmic weak-recovery threshold $\alpha_{\WR, \alg}\geq 0$, and we derive an analytical expression for this threshold.

\paragraph{G-VAMP State Evolution  ---}
Recalling that $q_{x}\in [0,\rho]$, from eq.~\eqref{eq:mmse} it is easy to see that the weak-recovery threshold is the smallest sample complexity $\alpha$ such that the potential of eq.~\eqref{eq:replica:freeen} has no longer a local maximum in $q_x = 0$. In opposition, the region for which the MSE is maximal ($\MSE = \rho$) corresponds to the existence of a \emph{trivial} maximum in eq.~\eqref{eq:replica:freeen} with $q_{x}=q_{z}=0$. The extrema of the potential in eq.~\eqref{eq:replica:freeen} can be characterized by the solutions of the following \emph{State Evolution} (SE) equations, obtained by looking at the zero-gradient points:
\begin{subnumcases}{\label{eq:se_general}}
    q_x = \mathbb{E}_{\xi} \mathcal{Z}_{0}|f_{0}|^2, & $q_z =\frac{1}{\hat{Q}_z + \hat{q}_z} \big[\frac{\hat{q}_z}{\hat{Q}_z} + \mathbb{E}_{\xi}\int \mathrm{d}y~\mathcal{Z}_{\out} |f_{\out}|^2\big]$, \\
    \hat{q}_x = \frac{q_x}{\rho(\rho-q_x)} - \gamma_x , &
    $\hat{q}_z = \frac{q_z}{Q_z(Q_z-q_z)} - \gamma_z$, \\
    \rho-q_x = \Big\langle \frac{1}{\rho^{-1} + \gamma_x + \lambda \gamma_z}\Big\rangle_\nu, &
    \label{eq:self_consistent_gamma}
    $\alpha(Q_z-q_z) = \Big\langle \frac{\lambda}{\rho^{-1} + \gamma_x + \lambda \gamma_z}\Big\rangle_\nu$.
\end{subnumcases}
\noindent where $f_{0}(b,a) = \partial_{b}\log\mathcal{Z}_{0}(b,a)$ and $f_{\out}(y;\omega,v) = \partial_{\omega}\log\mathcal{Z}_{\out}(y;\omega,v)$ are evaluated at $(b, a) = (\sqrt{\hat{q}_{x}}\xi, \hat{q}_{x})$ and $(\omega, v)=\big(\sqrt{\frac{\hat{q}_z}{\hat{Q}_z(\hat{Q}_z + \hat{q}_z)}}\xi, \frac{1}{\hat{Q}_z + \hat{q}_z}\big)$ respectively.
Note in particular that eq.~\eqref{eq:self_consistent_gamma} has to be solved over $(\gamma_x,\gamma_z)$ in order to be iterated.
Since the algorithmic performance is characterized by precisely maximizing eq.~\eqref{eq:replica:freeen} starting from the trivial point, the algorithmic weak-recovery threshold $\alpha_{\WR, \alg}$ can be analytically computed from a local stability analysis of this point. Note that in general $\alpha_{\WR,\IT} \neq \alpha_{\WR,\alg}$ since $q_{x}=q_{z}=0$ can be just a local maximum of eq.~\eqref{eq:replica:freeen}.

\paragraph{Existence and location of the weak-recovery threshold ---}
It is easy to verify that the state evolution equations~\eqref{eq:se_general}
admit a trivial fixed point in which $q_x = q_z = \hat{q}_x = \hat{q}_z = \gamma_x = \gamma_z = 0$ when
 $P_0$ and $P_\mathrm{out}$ are \emph{symmetric}, that is when for any $y \in \bbR$ and $x_1,x_2,z_1,z_2 \in \bbK$:
\begin{align}\label{eq:def_symmetry}
    |x_1| = |x_2| \Rightarrow P_0(x_1) = P_0(x_2) \hspace{1cm} \mathrm{and} \hspace{0.5cm} |z_1| = |z_2| \Rightarrow P_\mathrm{out}(y|z_1) = P_\mathrm{out}(y|z_2).
\end{align}
In particular, this symmetry condition holds for the phase retrieval likelihood and for Gaussian signals considered here.
When it exists, the trivial extremizer $q_{x}=q_{z}=0$ can be a (local) maximum or a minimum, corresponding to whether the trivial fixed point of the state evolution equations is stable or unstable. The weak-recovery threshold can therefore be determined by looking at the Jacobian around the trivial fixed point. The details of the stability analysis are given in Appendix \ref{sec:app:weakrecovery}. The result is that a linear instability of the trivial fixed point appears at $\alpha = \alpha_{\WR, \alg}$ satisfying the equation:
\begin{align}\label{eq:wr_threshold_general}
    \alpha_{\WR, \alg} = \frac{\langle \lambda \rangle_\nu^2}{\langle \lambda^2 \rangle_\nu}\Big(1+\Big[\int_\bbR \mathrm{d}y \frac{\Big|\int_\bbK \mathcal{D}_\beta z \ (|z|^2-1) \  P_\mathrm{out}\big(y\big|\sqrt{\frac{\rho \langle \lambda \rangle_\nu}{\alpha_{\WR,\alg}}} z\big)\Big|^2}{\int_\bbK \mathcal{D}_\beta z \ P_\mathrm{out}\big(y\big|\sqrt{\frac{\rho \langle \lambda \rangle_\nu}{\alpha_{\WR, \alg}}} z\big)}\Big]^{-1}\Big).
\end{align}
Note that the integrand and the averages $\langle \cdot \rangle_\nu$ depend on $\alpha_{\WR,\alg}$, so that this is an implicit equation on $\alpha_{\WR, \alg}$.
Eq.~\eqref{eq:wr_threshold_general} is the most generic formula for the weak recovery threshold for any data matrix $\bPhi$ and phase retrieval channel $P_\mathrm{out}$.
As emphasized in the following examples, it generalizes in particular several previously known formulas for different channels and random matrix ensembles.
\paragraph{Gaussian sensing matrix --- }
For Gaussian i.i.d.\ matrices, $\langle \lambda \rangle_\nu = \alpha$ and $\langle \lambda^2 \rangle_\nu = \alpha^2+\alpha$, so that
\begin{align}\label{eq:wr_threshold_gaussian_matrix}
    \alpha_{\WR, \alg} &= \Big[\int_\bbR \mathrm{d}y \frac{|\int_\bbK \mathcal{D}_\beta z \ (|z|^2-1) \  P_\mathrm{out}(y|\sqrt{\rho} z)|^2}{\int_\bbK \mathcal{D}_\beta z \ P_\mathrm{out}(y|\sqrt{\rho} z)}\Big]^{-1},
\end{align}
a result which was previously derived in \cite{Mondelli_2018} in the real and complex cases.

\paragraph{Noiseless phase retrieval --- }
In the noiseless phase retrieval problem, one has $P_\mathrm{out}(y|z) = \delta(y-|z|^2)$. In particular, one can easily check that this implies:
\begin{align}\label{eq:wr_threshold_phase_retrieval}
    \alpha_{\WR, \alg} &= \Big(1+\frac{\beta}{2}\Big) \frac{\langle \lambda \rangle_\nu^2}{\langle \lambda^2 \rangle_\nu}.
\end{align}
This last formula allows to retrieve and generalize many results previously derived in the literature. For instance, for a Gaussian i.i.d.\ matrix, we find $\alpha_{\WR, \alg} = \beta/2$ , which was derived in \cite{Mondelli_2018,luo2019optimal}. 
For an orthogonal or unitary column matrix, $\alpha_{\WR, \alg} = 1 + (\beta/2)$, which was already known for $\beta = 2$ \cite{Mondelli_2018} (but not for $\beta = 1$). For the product of $p$ i.i.d.\ Gaussian matrices with sizes $k_0,\cdots,k_p$, with $k_0 = m$ and $k_p = n$, and $\gamma_l \equiv n/k_l$ for $0\leq l<p$, we have $\alpha_{\WR, \alg} = (\beta/2) [1+\sum_{l=1}^p \gamma_l]^{-1}$, which generalizes the previously-known real case \cite{aubin2019exact}. We emphasize that eq.~\eqref{eq:wr_threshold_phase_retrieval} encapsulates all these results and goes beyond by considering an arbitrary spectrum for the sensing matrix, while eq.~\eqref{eq:wr_threshold_general} also considers 
arbitrary channels $P_\mathrm{out}$.

\paragraph{The weak-recovery IT transition --- } So far, we only considered the \emph{algorithmic} weak-recovery threshold.
Extending our analysis to the \emph{information-theoretic} treshold $\alpha_{\WR,\IT}$ is an interesting open direction, which requires understanding the appearance of a global maximum in the replica-symmetric potential of eq.~\eqref{eq:replica:freeen}, but not necessarily continuously from the $q_x\!=q_z\!=\!0$ solution.
At the moment, we are not able to carry such an analysis, which is left for future work.
\begin{figure}
\centering
\includegraphics[scale=0.45]{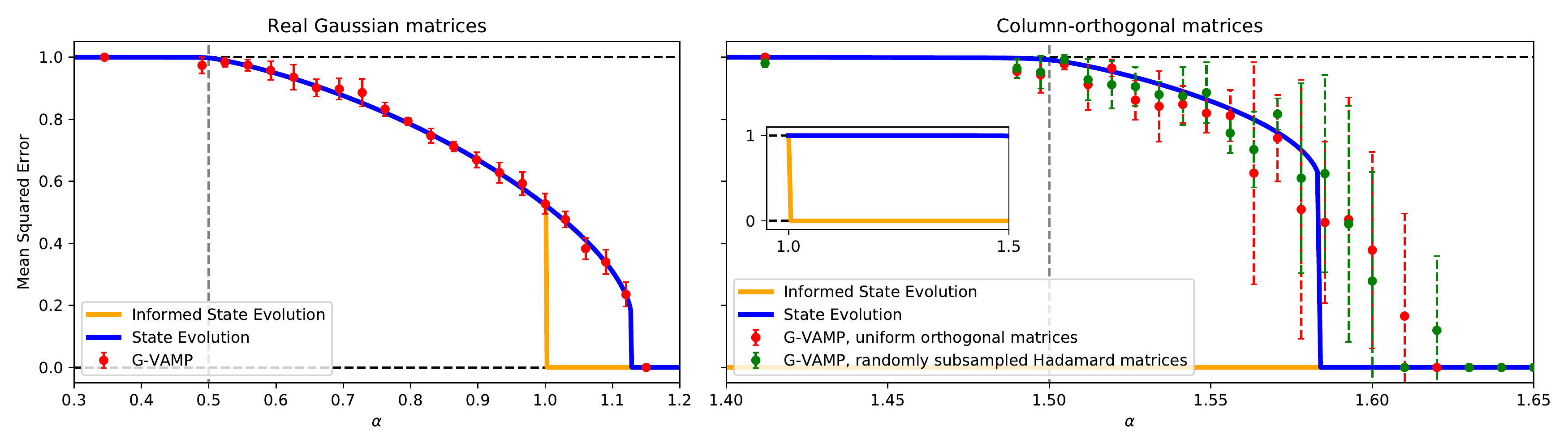}
\caption{
Comparison of MSE achieved by the Bayes-optimal estimator and the G-VAMP algorithm, for an i.i.d.\ real Gaussian (left) and a column-orthogonal (right) sensing matrix $\bPhi$ (i.e.\ $\bPhi^\intercal \bPhi /n = \mathds{1}_n$), with a real Gaussian prior.
Dots correspond to finite size simulations of G-VAMP (the mean and std are taken over $5$ instances, with $n = 8000$ in the Gaussian case and $m = 8192$ in the orthogonal case), while full lines are obtained from the state evolution equations. 
The vertical grey dashed lines denote the algorithmic weak recovery threshold $\alpha_{\WR,\alg}$. 
Note the presence of a statistical-to-algorithmic gap in both ensembles, and that for column-orthogonal matrices $\alpha_\mathrm{WR,\alg} > \alpha_\mathrm{FR,IT}$. 
}
\label{fig:mse_orthogonal}
\end{figure}
\section{Statistical and algorithmic analysis of 
noiseless phase retrieval}\label{sec:numerical}
While our results hold for any generalized estimation problem of the type introduced in Section~\ref{sec:replica_result_and_proof} we now focus especially on noiseless phase retrieval.
We fix $P_{\text{out}}(y|z)\!=\! \delta(y-|z|^2)$ and take $P_{0}\!=\! \mathcal{N}_\beta(0,1)$. We can indeed consider $\rho\!=\!1$, as the scaling is irrelevant under a noiseless channel. 

\paragraph{Full-recovery threshold for Gaussian signals ---}
We now turn our attention to the information-theoretical \emph{full-recovery} threshold $\alpha_{\FR,\IT}$. For high number of samples $\alpha\gg 1$, we expect the MMSE to plateau at a minimum achievable reconstruction error $\MMSE_{0} \equiv \inf_{\alpha}\MMSE(\alpha)$, which is a function of the statistics of $\bPhi$. In this case, we define the information-theoretical full-recovery threshold $\alpha_{\FR, \IT}$ as the smallest sample complexity such that $\MMSE_{0}$ is attained.
In Appendix~\ref{sec:app_perfect_recovery} we show that the full-recovery can be \emph{perfect} ($\MMSE_{0} = 0$) or \emph{partial} ($\MMSE_{0} > 0$) depending on the rank of $\bPhi$. Indeed, we show that:
\begin{align}\label{eq:full_recovery_it}
  \alpha_{\FR, \IT} &\equiv \beta(1-\nu(\{0\})).
\end{align}
Informally, $\nu(\{0\})$, the fraction of zeros in the spectrum of $\bPhi^\dagger \bPhi /n$, is the fraction of the signal ``lost'' by the sensing matrix.
The stationary point of eq.~\eqref{eq:replica:freeen} that corresponds to full recovery satisfies $\mathrm{MMSE}_0 = \nu(\{0\})$, while the reconstruction of the vector $\bPhi \bx$ is perfect.
The effect of rank deficiency is illustrated in  Fig.~\ref{fig:mse_complex_product_gaussians}-left, with the case of $\bPhi$ given by a product of two Gaussian matrices.
We emphasize that $\alpha_{\FR, \IT}$ is in general not well-defined for an arbitrary channel, which is why we only derived eq.~\eqref{eq:full_recovery_it} in the noiseless case.

\paragraph{Evaluation of the thresholds and comparison to simulations ---} 
Algorithmic weak-recovery and information-theoretical full-recovery thresholds can be readily obtained from eqs.~\eqref{eq:wr_threshold_phase_retrieval},\eqref{eq:full_recovery_it}.  Below, we solve the state evolution equations~\eqref{eq:se_general} for different real and complex ensembles of sensing matrix $\bPhi$, and compare it to numerical simulations of G-VAMP.

\paragraph{Real case ---} The case of a real signal $\bX^\star\in\mathbb{R}^{n}$ has been previously studied in the literature for particular ensembles of real-valued sensing matrix $\bPhi$. A formula analogous to eq.~\eqref{eq:replica:freeen} has been heuristically derived for real orthogonally invariant matrices $\bPhi$ and real signals drawn from generic but separable $P_{0}$ \cite{takahashi2020macroscopic}, and the specific i.i.d.\ Gaussian matrix case was rigorously proven in \cite{barbier2019optimal}. The heuristic analysis was later extended to non-separable signal distributions $P_{0}$ \cite{aubin2019exact}. 
In Fig.~\ref{fig:mse_orthogonal}, we illustrate the case of real Gaussian and real column-orthogonal sensing matrix $\bPhi$, the latter not having been investigated previously in the literature. We compute the MMSE by solving the State Evolution equations starting from an \emph{informed} solution (close to full recovery). 
The minimal mean-squared error achievable with the G-VAMP algorithm is computed using the State Evolution equations starting from the \emph{uninformed} $q_z = 0$ solution. 
We compare these predictions with numerical simulations of the G-VAMP algorithm on Gaussian matrices and uniformly sampled orthogonal matrices, as well 
as randomly subsampled Hadamard matrices. The simulations are in very good agreement with the prediction, and
our results on Hadamard matrices suggest that the curves of Fig.~\ref{fig:mse_orthogonal}-right are valid for more general ensembles than uniformly sampled orthogonal matrices, and that one can allow some controlled structure in the matrix without harming the performance of the algorithm.

\paragraph{Complex case ---} Previous works on complex signals $\bX^\star\in\mathbb{C}^{n}$ have (to the best of our knowledge) focused solely on the study of the weak recovery threshold $\alpha_{\rm WR}$ (statistical or algorithmic), which was located for i.i.d.\ complex Gaussian matrices \cite{Mondelli_2018,luo2019optimal} and uniformly sampled column-unitary matrices \cite{ma2017orthogonal, dudeja2020analysis}. 
We begin by extending the aforementioned results by identifying the full recovery threshold $\alpha_{\mathrm{FR,IT}}$ in these cases, and comparing the performance of the G-VAMP algorithm to the SE solution.  
Fig.~\ref{fig:mse_complex} illustrates our results for these two ensembles.
The algorithmic full-recovery threshold $\alpha_{\mathrm{FR,\alg}}$ is found numerically from the state evolution equations
and is in good agreement with finite size simulations.
The existence of a statistical-to-algorithmic gap $\Delta = \alpha_{\FR,\alg} - \alpha_{\FR,\IT}\geq 0$ reflects the intrinsic hardness of phase retrieval in the real and complex case. However, it is interesting to note that even though full-recovery in the complex case requires more data than in the real case, the size of the statistical-to-algorithmic gap in the complex ensembles is smaller than in their real counterparts.
\begin{figure}
  \centering
\includegraphics[scale=0.45]{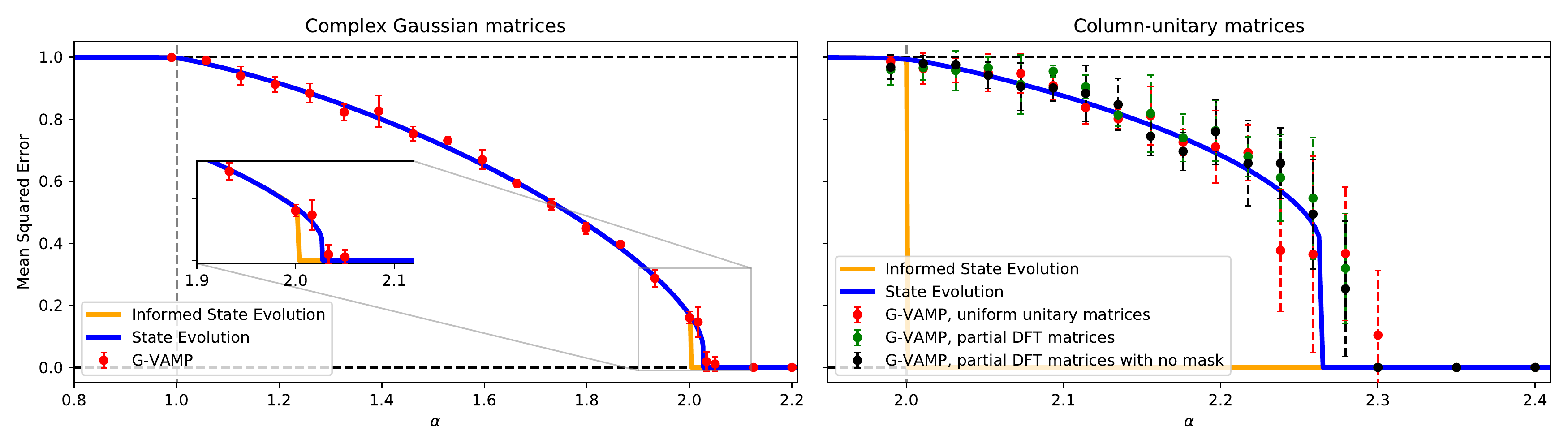}
\caption{Comparison of MSE achieved by the Bayes-optimal estimator and G-VAMP algorithm for phase retrieval, for the case of an i.i.d.\ complex Gaussian (left) and a column-unitary (right) sensing matrix $\bPhi$ (i.e.\ $\bPhi^\dagger \bPhi / n = \mathds{1}_n$), with a complex Gaussian prior. 
Dots correspond to finite size simulations of G-VAMP (with $n = 5000$, the mean and std are taken over $5$ independent instances), while full lines are obtained from the state evolution equations. Note the presence of a statistical-to-algorithmic gap in both ensembles. }
\label{fig:mse_complex}
\end{figure}
In Fig.~\ref{fig:mse_complex_product_gaussians} we analyze the case of a product of two i.i.d.\ standard Gaussian matrices $\bPhi = \bW_1 \bW_2$, with $\bW_{1}\in\mathbb{C}^{m\times p}$ and $\bW_{2}\in\mathbb{C}^{p\times n}$ for different aspect ratios $\gamma \equiv p/n$. 
We can identify the presence of a threshold
$\alpha_{\WR,\alg} = \gamma / (1+\gamma)$ (computed in Section~\ref{sec:weak_recovery}) that delimits the possibility of weak recovery both information-theoretically and in polynomial time.
The information-theoretic full-recovery is achieved at $\alpha_\mathrm{FR,IT} = \min(2,2\gamma)$, in agreement with eq.~\eqref{eq:full_recovery_it}.
Consistently with the real case results of \cite{aubin2019exact}, the full recovery algorithmic threshold is very close to the information-theoretic one, and precisely equal for $\gamma=1$, although the gap is too small to be visible in the left and right parts of Fig.~\ref{fig:mse_complex_product_gaussians}. Therefore, the performance of G-VAMP is exactly given by the Bayes-optimal estimator, apart for $\gamma \neq 1$ in a very small range $(\alpha_{\FR,\IT},\alpha_{\FR,\alg})$, whose size is of order $10^{-3}$ for $\gamma \in \{0.5,1.5\}$. As $\gamma \to \infty$, one recovers the statistical-to-algorithmic gap present in the complex Gaussian case, which is again very small (around $0.027$, cf Table~\ref{table:results}). Although this hard phase is very small, we therefore postulate its existence for all $\gamma \neq 1$, generalizing the real case results of \cite{aubin2019exact}.
\begin{figure}[ht!]
\centering
\includegraphics[scale=0.435]{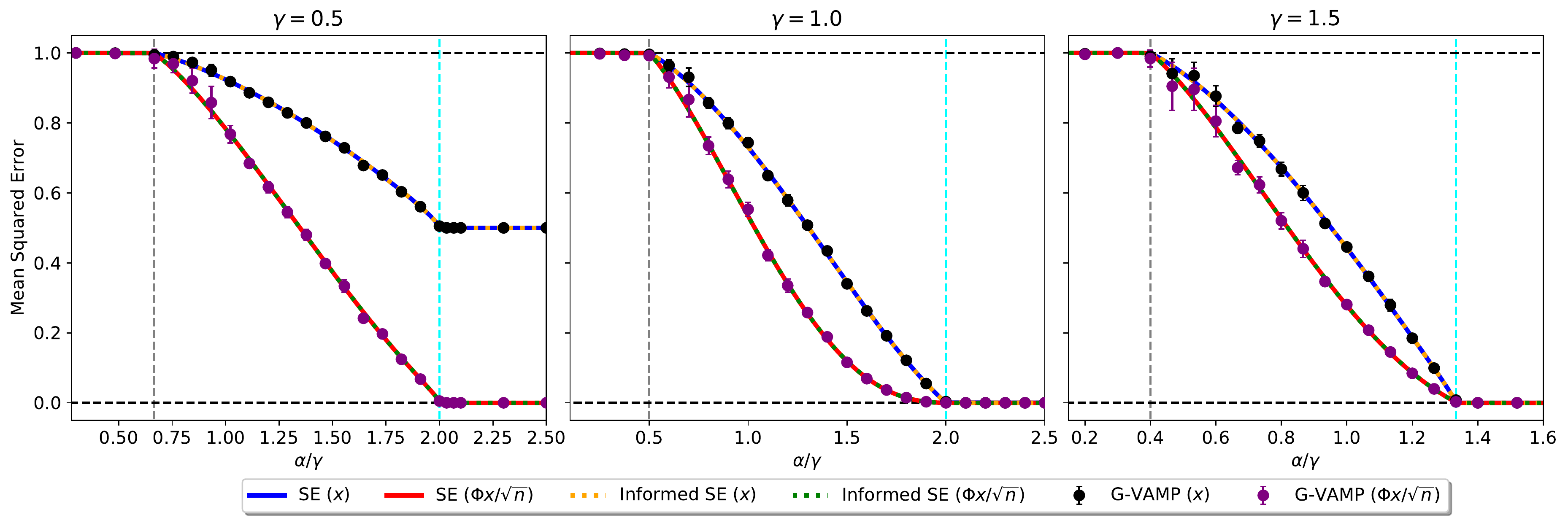}
\caption{Mean squared error as a function of the measurement rate $\alpha$, for a sensing matrix $\bPhi = \bW_{1}\bW_{2}$ a product of two complex i.i.d.\ standard Gaussian matrices $\bW_{1}\in\bbC^{m\times p}$, $\bW_{2}\in\bbC^{p\times n}$ with aspect ratios $\gamma = p/n \in \{0.5,1.0,1.5\}$. Red curves denote the recovery on $\bPhi\bX^{\star}/\sqrt{n}$ and blue curves on $\bX^{\star}$.
Cyan dashed lines denote the full reconstruction threshold $\alpha_{\FR,\IT}$.
The G-VAMP experiments were performed with $n = 5000$, and the mean and std are taken over $5$ instances.
}
\label{fig:mse_complex_product_gaussians}
\end{figure}

\paragraph{Application to images ---}
Importantly, while the knowledge of the distribution of the true signal is required for our theoretical analysis, 
the G-VAMP algorithm is also well-defined beyond this scope, e.g.\ it can be used to infer natural images with Fourier matrices. 
Using a Gaussian prior to infer the image can then actually be seen as the minimal assumption on the underlying signal, as it amounts to simply fix its norm: our theory can thus predict the performance of this G-VAMP algorithm for any signal, structured or not. 
We conducted a simple experiment on a natural image with a randomly subsampled DFT matrix $\bPhi$, described in Fig.~\ref{fig:real_image}.
Although we are far from a Bayes-optimal setting, the achieved MSE is very close to values of Fig.~\ref{fig:mse_complex}
of the paper, for all values of $\alpha$. 
In particular, we achieve perfect recovery for $\alpha\!\geq\!2.3$, just above $\alpha_{\FR,\alg}\!\simeq\!2.27$ which was derived for random unitary matrices, i.i.d.\ data and in the Bayes-optimal setting. 
\begin{figure}
\centering
\includegraphics[width=0.95\textwidth]{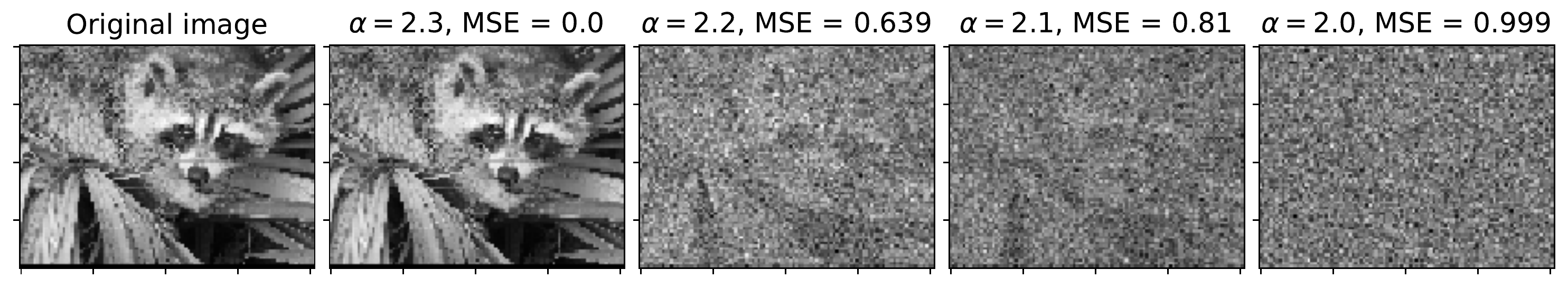}
\caption{Performance of the G-VAMP algorithm for noiseless phase retrieval. We wish to recover a 77x102 image (on the left), and we use a complex Gaussian prior to infer the signal. The data matrix $\bPhi$ is a randomly subsampled DFT matrix.}
    \label{fig:real_image}
\end{figure}

\newpage 
\section*{Acknowledgments}
The authors would like to thank Yoshiyuki Kabashima for insightful discussions on the replica computations with orthogonally invariant matrices, and Yue M.\ Lu for fruitful discussions at the beginning of this work.
Additional funding is acknowledged by AM from ``Chaire de
recherche sur les mod\`eles et sciences des donn\'ees'', Fondation CFM pour la Recherche-ENS.  This work is supported by the ERC under the European Union’s Horizon 2020 Research and Innovation Program
714608-SMiLe, as well as by the French Agence Nationale de la Recherche under grant ANR-17-CE23-0023-01
PAIL and ANR-19-P3IA-0001 PRAIRIE.

\bibliographystyle{alpha}
\bibliography{refs}

\changelocaltocdepth{1}
\appendix
\newpage
\begin{center}
 \LARGE SUPPLEMENTARY MATERIAL
\end{center}
\noindent
Many notations and definitions used throughout this supplementary material are given in Sections~\ref{subsec:app_definitions},\ref{subsec:app_derivatives}.
The Python code that produced the numerical data used in Figures~\ref{fig:mse_orthogonal},\ref{fig:mse_complex},\ref{fig:mse_complex_product_gaussians}, as well as the data itself, are given in the following \href{https://github.com/sphinxteam/PhaseRetrieval_demo}{Github repository} \cite{github_demo_repo}, and is dependent on the open-source TrAMP library \cite{baker2020tramp}. We provide in particular an ``example'' notebook which contains a detailed presentation of the functions necessary to generate both the state evolution and the G-VAMP data for the complex Gaussian matrix case.

\section{The replica computation of the free entropy}
\label{sec:app:replicas}

In this section, which has a more pedagogical purpose, we perform the replica calculation that gives Conjecture~\ref{conjecture:replicas_general}.
This calculation for real matrices was already performed in \cite{takahashi2020macroscopic},
and as we will see it generalizes to complex valued signal and matrices.
Note that we restricted ourselves to a Bayes-optimal inference problem, while the setting of \cite{takahashi2020macroscopic}
includes possibly mismatched models\footnote{For a mismatched model, the replica symmetry assumption, discussed below, is generically not valid.}. 

\subsection{Setting}

We let $n,m \to \infty$ with $m/n \to \alpha > 0$. We assume that we have access to a prior distribution $P_0$ on $\bbK$ and a channel distribution 
$P_{\rm out}(y|z)$, of ``observations'' $y \in \bbR$ conditioned by a latent variable $z \in \bbK$. 
We are given data $\bY \in \bbR^m$ generated as:
\begin{align*}
    Y_\mu \sim P_\mathrm{out}\Big(\cdot \Big| \frac{1}{\sqrt{n}} \sum_{i=1}^n \Phi_{\mu i} X^\star_i\Big),
\end{align*}
in which $X^\star_i \overset{\mathrm{i.i.d.}}{\sim} P_0$ (with $\EE|X^\star|^2 = \rho > 0$), and $\bPhi \in \bbK^{m \times n}$ is a matrix that is both left and right orthogonally (respectively unitarily) invariant, 
meaning that for all $\bO,\bU \in \mathcal{U}_\beta(m) \times \mathcal{U}_\beta(n)$, $\bPhi \overset{d}{=} \bO \bPhi \bU$. Compared to Conjecture~\ref{conjecture:replicas_general}, we added a left-invariance hypothesis. However the analysis of G-VAMP \cite{rangan2017vector,schniter2016vector} shows that this left invariance is actually not needed for the result, and thus we state Conjecture~\ref{conjecture:replicas_general} for matrices that are only right-invariant, but we use the left invariance to simplify the following (heuristic) calculation.
Moreover, we assume that the asymptotic eigenvalue distribution of $\bPhi^\dagger \bPhi / n$ is well-defined and we denote it $\nu$, and that the eigenvalue distribution of $\bPhi^\dagger \bPhi / n$ has large deviations in a scale at least $n^{1+\eta}$ for an $\eta > 0$.
The partition function is:
\begin{align*}
    \mathcal{Z}_n(\bY) &\equiv \int_{\bbK^n} \prod_{i=1}^n P_0(\mathrm{d}x_i) \ \prod_{\mu=1}^m P_{\mathrm{out}}\Big(Y_\mu\Big|\frac{1}{\sqrt{n}} \sum_{i=1}^n \Phi_{\mu i} x_i\Big).
\end{align*}
The replica trick \cite{mezard1987spin} consists in computing the $p$-th moment of the partition function for arbitrary integer $p$, 
before extending this expression analytically to any $p > 0$ and using the formula:
\begin{align*}
    \lim_{n \to \infty} \frac{1}{n} \EE_{\bPhi,\bY} \ln \mathcal{Z}_n(\bY) &= \lim_{p \downarrow 0} \lim_{n \to \infty} \frac{1}{np} \ln \EE_{\bPhi,\bY}[\mathcal{Z}_n(\bY)^p].
\end{align*}
This method is obviously non-rigorous given the inversion of limits $p \downarrow 0$ and $n \to \infty$,
as well as the analytic continuation to arbitrary $p > 0$ of the $p$-th moment. 
However, it has achieved tremendous success in the study of spin glasses and inference problems, see e.g.\ \cite{zdeborova2016statistical}.

\subsection{Computing the \texorpdfstring{$p$}{p}-th moment of the partition function}

Thanks to Bayes-optimality, we can easily write the average of $\mathcal{Z}_n(\bY)^p$ as an average over $p+1$ replicas of the system, 
by considering $\bX^\star$ as the replica of index $0$. We obtain for any $p \geq 1$:
\begin{align}\label{eq:pthmoment_1}
    \EE[\mathcal{Z}_n(\bY)^p] &= \EE_{\bPhi} \int_{\bbR^m} \mathrm{d}\bY \prod_{a=0}^p \Big\{\Big[\int_\bbK \prod_{i=1}^n P_0(\mathrm{d}x_i^a)\int_\bbK \prod_{\mu=1}^m \mathrm{d}z_\mu^a P_\mathrm{out}(Y_\mu|z_\mu^a) \Big]\delta\Big(\bz^a - \frac{\bPhi \bx^a}{\sqrt{n}}\Big)\Big\}.
\end{align}
The first step is to decompose eq.~\eqref{eq:pthmoment_1} into three terms, corresponding to the prior $P_0$, the channel $P_\mathrm{out}$, and the ``delta'' term. 
Note that the matrix $\bPhi$ only appears in the last ``delta'' term.
By left and right orthogonal (resp.\ unitary) invariance of $\bPhi$, the quantity
\begin{align*}
    \EE_{\bPhi} \Big[\prod_{a=0}^p\delta\Big(\bz^a - \frac{1}{\sqrt{n}} \bPhi \bx^a\Big)\Big]
\end{align*}
is determined by the value of the \emph{overlaps} $\bQ^z \equiv \{(\bz^a)^\dagger \bz^b / m\}_{a,b=0}^p$ and $\bQ^x \equiv \{(\bx^a)^\dagger \bx^b / n\}_{a,b=0}^p$, which are positive symmetric (Hermitian in the complex case) matrices. 
As is standard in such replica calculations, we will constraint the terms in eq.~\eqref{eq:pthmoment_1} by the value of these overlaps, 
before performing a Laplace method on the resulting function of the overlaps.
By $A_n \simeq B_n$, we will mean equivalence at leading exponential order, that is $ (\ln A_n)/n = (\ln B_n)/n + \smallO_n(1)$.
We introduce in eq.~\eqref{eq:pthmoment_1} the term:
\begin{align*}
    1 \simeq \int \prod_{0 \leq a \leq b \leq p} \mathrm{d}Q^x_{ab}\ \mathrm{d}Q^z_{ab} \Big[\prod_{a \leq b} \delta(n Q^x_{ab} - (\bx^a)^\dagger  \bx^b)\Big]\Big[\prod_{a \leq b} \delta(m Q^z_{ab} - (\bz^a)^\dagger \bz^b)\Big].
\end{align*}
We can use a Fourier transformation of the delta terms, which allows in the end to transform eq.\eqref{eq:pthmoment_1} into the product of three independent terms.
Performing the saddle-point on $\bQ^x,\bQ^z$, we obtain the corresponding result:
\begin{align*}
    \lim_{n \to \infty} \frac{1}{n} \ln \EE_{\bY,\bPhi} [\mathcal{Z}_n(\bY)^p] &= \sup_{\bQ^x,\bQ^z} [I_0(p,\bQ^x) + \alpha I_\mathrm{out}(p,\bQ^z) + I_\mathrm{int}(p,\bQ^x,\bQ^z)],
\end{align*}
in which the supremum is made over positive symmetric (Hermitian) matrices, and $I_0,I_\mathrm{out}$ and $I_\mathrm{int}$ are functions whose calculation will be detailed below.

\subsubsection{The prior term \texorpdfstring{$I_0(p,\bQ^x)$}{}}

We have by the Laplace method after Fourier transformation of the delta terms:
\begin{align*}
    I_0(p,\bQ^x) &\simeq \frac{1}{n} \ln \int \prod_{0 \leq a \leq b \leq p} \mathrm{d}\hat{Q}^x_{ab} \int_\bbK \prod_{a=0}^p \prod_{i=1}^n P_0(\mathrm{d}x_i^a) e^{-\frac{\beta}{2} \sum_{a,b =0 }^p \hat{Q}^x_{ab} (\sum_i \overline{x^a_i} x^b_i - n Q^x_{ab} )}, \\
    &\simeq \inf_{\hat{\bQ^x}} \Big[\frac{\beta}{2} \sum_{a,b} Q^x_{ab} \hat{Q}^x_{ab} + \ln \int_\bbK \prod_{a=0}^p P_0(\mathrm{d}x^a) e^{-\frac{\beta}{2} \sum_{a,b} \hat{Q}^x_{ab} \overline{x^a} x^b}\Big].
\end{align*}
The infimum is again over positive symmetric (Hermitian) matrices.
We also made use of the fact that the prior $P_0$ is i.i.d.\ over the elements of $\bx$.
A very important assumption of our calculation is replica symmetry. It amounts to assume that all the $(p+1)$ replicas 
are equivalent, and that this symmetry is not broken by the system at the solution of the Laplace method. 
Replica symmetry and replica symmetry breaking are a very rich field of study in statistical physics \cite{mezard1987spin}.
It has been argued that for an inference problem in the Bayes-optimal setting (as is the present case), replica symmetry is never broken \cite{zdeborova2016statistical}.
We can therefore assume a replica symmetric form of $\bQ^x,\hat{\bQ}^x$ at the point at which the saddle point is reached, that we write as:
\begin{align}\label{eq:rs_Qx}
    \bQ^x &= \begin{pmatrix}
        Q_x & q_x & \cdots & q_x \\
        q_x & Q_x & \cdots & q_x \\
        \vdots & \vdots & \ddots & \vdots \\
        q_x & q_x & \cdots & Q_x
    \end{pmatrix}, \hspace{1cm}
    \hat{\bQ}^x = \begin{pmatrix}
        \hat{Q}_x & -\hat{q}_x & \cdots & -\hat{q}_x \\
        -\hat{q}_x & \hat{Q}_x & \cdots & -\hat{q}_x \\
        \vdots & \vdots & \ddots & \vdots \\
        -\hat{q}_x & -\hat{q}_x & \cdots & \hat{Q}_x
    \end{pmatrix}.
\end{align}
Note that for $\beta \in \{1,2\}$, we have $Q_x,q_x,\hat{Q}_x,\hat{q}_x \in \bbR$.
After a simple Gaussian transformation of the squared term using the general identity for $x \in \bbK$:
\begin{align*}
    \exp\Big(\frac{\beta}{2} |x|^2\Big) &= \int_\bbK \mathcal{D}_\beta \xi \ \exp(\beta x \cdot \xi),
\end{align*}
we reach the final expression:
\begin{align}\label{eq:prior_term}
     &I_0(p,Q_x,q_x) =  \\
     &\inf_{\hat{Q}_x,\hat{q}_x} \Big\{\frac{\beta(p+1)}{2} Q_x \hat{Q}_x - \frac{\beta p(p+1)}{2} q_x \hat{q}_x + \ln \int_\bbK \mathcal{D}_\beta\xi \Big[\int_\bbK P_0(\mathrm{d}x) e^{-\frac{\beta(\hat{Q}_x + \hat{q}_x)}{2} |x|^2 + \beta \sqrt{\hat{q}_x} x \cdot \xi }\Big]^{p+1}\Big\}.
     \nonumber
\end{align}

\subsubsection{The channel term \texorpdfstring{$I_\mathrm{out}(p,\bQ^z)$}{}}

This term is very similar to the prior term detailed in the previous section.
We use completely similar replica symmetric assumptions for the overlaps $\bQ^z$ to the ones on $\bQ^x$ described in eq.~\eqref{eq:rs_Qx}.
We reach:
\begin{align}\label{eq:channel_term}
    I_\mathrm{out}(p,Q_z,q_z) &= \inf_{\hat{Q}_z,\hat{q}_z} \Big\{\frac{\beta(p+1)}{2} Q_z \hat{Q}_z - \frac{\beta p(p+1)}{2} q_z \hat{q}_z + \frac{\beta(p+1)}{2} \ln (2 \pi/ (\beta \hat{Q}_z)) \Big. \\
    &\Big. + \ln \int_\bbR \mathrm{d}y \int_\bbK \mathcal{D}_\beta\xi \Big[\int_\bbK \mathrm{d}z\Big(\frac{2 \pi}{\beta \hat{Q}_z}\Big)^{-\beta/2} P_\mathrm{out}(y|z) \ e^{-\beta\frac{\hat{Q}_z + \hat{q}_z}{2} |z|^2 + \beta \sqrt{\hat{q}_z} z \cdot \xi }\Big]^{p+1}\Big\}. \nonumber
\end{align}
We normalized the integrals so that in the limit $p \to 0$, the term inside the logarithm goes to $1$, which will be a useful remark.

\subsubsection{The delta term \texorpdfstring{$I_\mathrm{int}(p,\bQ^x,\bQ^z)$}{}}\label{subsubsec:Ic_computation}
We now turn to the computation of the delta term:
\begin{align}\label{eq:2}
   I_\mathrm{int}(p,\bQ^x,\bQ^z) &\equiv \lim_{n \to \infty}\frac{1}{n} \ln \EE_{\bPhi} \Big[\prod_{a=0}^p\delta\Big(\bz^a - \frac{1}{\sqrt{n}} \bPhi \bx^a\Big)\Big],
\end{align}
assuming that $\bQ^x,\bQ^z$ are known. 
Computing this term is central in this replica calculation.
We use, as is done in \cite{takahashi2020macroscopic}, the identity:
\begin{align}\label{eq:3}
   \frac{1}{n} \ln \EE_{\bPhi} \Big[\prod_{a=0}^p\delta\Big(\bz^a - \frac{1}{\sqrt{n}} \bPhi \bx^a\Big)\Big] &= \lim_{\epsilon \downarrow 0} \frac{1}{n} \ln \EE_{\bPhi} \Big[\frac{\exp\big\{-\frac{\beta}{2\epsilon} \sum_a\big\lVert\bz^a - \frac{1}{\sqrt{n}} \bPhi \bx^a \big\rVert^2\big\}}{(2\pi \epsilon / \beta)^{\frac{\beta m(p+1)}{2}}}\Big],
\end{align}
and we invert the $n \to \infty$ and the $\epsilon \to 0$ limit.
Let us rewrite the 
right-hand-side of eq.~\eqref{eq:3}. Since $\bPhi$ is orthogonally (resp.\ unitarily) invariant, we can write this term as:
\begin{align}\label{eq:4}
\EE \Big[\frac{\exp\big\{-\frac{\beta}{2\epsilon} \sum_a\big\lVert\bz^a - \frac{1}{\sqrt{n}} \bPhi \bx^a \big\rVert^2\big\}}{(2\pi \epsilon / \beta)^{\frac{\beta m(p+1)}{2}}}\Big]
    &= \EE \Big[\frac{\exp\Big\{-\frac{\beta}{2\epsilon} \sum_a\big\lVert\bO\bz^a - \frac{1}{\sqrt{n}} \bPhi \bU \bx^a \big\rVert^2\Big\}}{(2\pi \epsilon / \beta)^{\frac{\beta m(p+1)}{2}}}\Big],
\end{align}
in which the average on the right hand side is made over $(\bPhi,\bO,\bU)$, with $(\bO,\bU)$ uniformly sampled over the orthogonal groups $\mathcal{U}_\beta(m),\mathcal{U}_\beta(n)$.
Note that since the overlap matrices $\bQ^z,\bQ^x$ are fixed, one can show that when $\bU$ is uniformly distributed over $\mathcal{U}_\beta(n)$, the set of vectors $\{\bU \bx^a\}_{a=0}^p$ 
is uniformly distributed over the set of $(p+1)$ vectors in $\bbK^n$ with overlap matrix $\bQ^x$. There is a completely similar result for $\bz$ as well.
The consequence is that we can replace in eq.~\eqref{eq:4} the average over $\bO,\bU$ by an average over the vectors satisfying this constraint:
\begin{align}\label{eq:5}
    &I_\mathrm{int}(p,\bQ^x,\bQ^z) \\
    & \simeq \frac{1}{n} \ln \EE_{\bPhi} \frac{\int_\bbK \prod_{a} \mathrm{d}\bx^a \ \mathrm{d}\bz^a \Big[\prod_{a\leq b} \delta(n Q_{ab}^x - (\bx^a)^\dagger \bx^b) \delta(m Q_{ab}^z - (\bz^a)^\dagger \bz^b)\Big] \frac{e^{-\frac{\beta}{2\epsilon} \sum_a \lVert\bz^a - \frac{1}{\sqrt{n}} \bPhi \bx^a \rVert^2}}{(2 \pi \epsilon / \beta )^{\beta m(p+1)/2}}}{\int_\bbK \prod_{a} \mathrm{d}\bx^a \ \mathrm{d}\bz^a \Big[\prod_{a\leq b} \delta(n Q_{ab}^x - (\bx^a)^\dagger \bx^b) \delta(m Q_{ab}^z - (\bz^a)^\dagger \bz^b)\Big] } \nonumber.
\end{align}
The numerator and the denominator correspond to two terms, that we denote $I_\mathrm{int}(p,\bQ^x,\bQ^z) = I_c^{(n)}(p,\bQ^x,\bQ^z) - I_c^{(d)}(p,\bQ^x,\bQ^z)$.
We can introduce the Fourier-transform of the delta distribution to compute both terms, as in the previous sections.
Let us start with the denominator. It reduces after Fourier-transformation to a Gaussian integral involving a block-diagonal matrix:
\begin{align*}
    I_\mathrm{int}^{(d)}(p,\bQ^x,\bQ^z) &\simeq \frac{\beta}{2}\inf_{\bGamma^x,\bGamma^z} \Big[\mathrm{Tr}[\bQ^x \bGamma^x] + \alpha \mathrm{Tr}[\bQ^z \bGamma^z] + (\alpha+1)(p+1) \ln \frac{2\pi}{\beta} \\ 
    & \hspace{1cm}- \ln \det \bGamma^x - \alpha \ln \det \bGamma^z \Big],
\end{align*}
with symmetric (Hermitian) positive matrices $\bGamma^x,\bGamma^z$ of size $(p+1)$. The infimum is readily solved by $\bGamma^x = (\bQ^x)^{-1}$ and $\bGamma^z = (\bQ^z)^{-1}$, which yields:
\begin{align}\label{eq:Icd}
    I_\mathrm{int}^{(d)}(p,\bQ^x,\bQ^z) &\simeq \frac{\beta(\alpha+1)(p+1)}{2}(1+ \ln \frac{2\pi}{\beta}) + \frac{\beta}{2} \ln \det \bQ^x + \frac{\alpha \beta}{2} \ln \det \bQ^z.
\end{align}
Let us now compute the numerator with the same technique. We obtain:
\begin{align}\label{eq:Icn}
    I_\mathrm{int}^{(n)}(p,\bQ^x,\bQ^z) &\simeq \frac{\beta(p+1)}{2} \ln \frac{2 \pi}{\beta \epsilon^\alpha} + \frac{\beta}{2} \inf_{\bGamma^x,\bGamma^z} \Big[\mathrm{Tr}[\bQ^x \bGamma^x] + \alpha\mathrm{Tr}[\bQ^z \bGamma^z] - \frac{1}{n}\ln \det \bM_n \Big],
\end{align}
with a Hermitian matrix $\bM_n$ having a block structure, that we write here in the tensor product form:
\begin{align}
    \bM_n &\equiv \begin{pmatrix}
       (\bGamma^z + \frac{1}{\epsilon} \mathbbm{1}_{p+1}) \otimes \mathbbm{1}_m & \frac{1}{\epsilon} \mathbbm{1}_{p+1} \otimes \frac{\bPhi}{\sqrt{n}} \\
       \frac{1}{\epsilon} \mathbbm{1}_{p+1} \otimes \frac{\bPhi^\dagger}{\sqrt{n}} & \bGamma^x \otimes \mathbbm{1}_n + \frac{1}{\epsilon} \mathbbm{1}_{p+1} \otimes \frac{\bPhi^\dagger \bPhi}{n}
    \end{pmatrix}.
\end{align}
Using the block-matrix determinant calculation:
\begin{align*}
    \det \begin{pmatrix}
        A & B \\
        C & D
    \end{pmatrix} &= \det A \times \det(D - C A^{-1} B),
\end{align*}
we reach:
\begin{align*}
    &\frac{1}{n} \ln \det \bM_n =  \alpha \ln \det\Big(\bGamma^z + \frac{1}{\epsilon} \mathbbm{1}_{p+1}\Big) \\ 
    & \nonumber \hspace{2cm} + \frac{1}{n} \ln \det \Big(\bGamma^x \otimes \mathbbm{1}_n + \frac{1}{\epsilon} \mathbbm{1}_{p+1} \otimes \frac{\bPhi^\dagger \bPhi}{n}- \frac{1}{\epsilon^2} \Big(\bGamma^z + \frac{1}{\epsilon} \mathbbm{1}_{p+1}\Big)^{-1} \otimes \frac{\bPhi^\dagger \bPhi}{n} \Big), \\ 
    &= (\alpha-1)\ln \det\Big(\bGamma^z + \frac{1}{\epsilon} \mathbbm{1}_{p+1}\Big) + \frac{1}{n} \ln \det \Big(\bGamma^x \bGamma^z \otimes \mathbbm{1}_n + \frac{1}{\epsilon} \bGamma^x \otimes \mathbbm{1}_n + \frac{1}{\epsilon} \bGamma^z \otimes \frac{\bPhi^\dagger \bPhi}{n}\Big) \nonumber, \\
    &= (\alpha-1)\ln \det\Big(\bGamma^z + \frac{1}{\epsilon} \mathbbm{1}_{p+1}\Big) + \Big \langle \ln \det \Big(\bGamma^x \bGamma^z + \frac{1}{\epsilon} (\bGamma^x + \lambda\bGamma^z) \Big) \Big\rangle_\nu ,
\end{align*}
with $\lambda$ distributed according to $\nu$, the asymptotic eigenvalue distribution of $\bPhi^\dagger \bPhi/n$.
This allows to write $I_\mathrm{int}^{(n)}$ from eq.~\eqref{eq:Icn} and to take the $\epsilon \downarrow 0$ limit, keeping the terms that do not vanish:
\begin{align}\label{eq:Icn_2}
    I_\mathrm{int}^{(n)}(p,\bQ^x,\bQ^z) &\simeq \frac{\beta}{2} \inf_{\bGamma^x,\bGamma^z} [\mathrm{Tr}[\bQ^x \bGamma^x] + \alpha \mathrm{Tr}[\bQ^z \bGamma^z] - \langle \ln \det (\bGamma^x + \lambda \bGamma^z) \rangle_\nu].
\end{align}
Finally, we again consider a replica-symmetric assumption for $\bGamma^x,\bGamma^z$, in the form:
\begin{align}
    \bGamma^x = \begin{pmatrix}
        \Gamma_x & -\gamma_x & \cdots & -\gamma_x \\
        -\gamma_x & \Gamma_x & \cdots & -\gamma_x \\
        \vdots & \vdots & \ddots & \vdots \\
        -\gamma_x & -\gamma_x & \cdots & \Gamma_x
    \end{pmatrix}, \hspace{1cm}
    \bGamma^z = \begin{pmatrix}
        \Gamma_z & -\gamma_z & \cdots & -\gamma_z \\
        -\gamma_z & \Gamma_z & \cdots & -\gamma_z \\
        \vdots & \vdots & \ddots & \vdots \\
        -\gamma_z & -\gamma_z & \cdots & \Gamma_z
    \end{pmatrix}.
\end{align}
As for the overlap matrices, we have $\gamma_x,\gamma_z \in \bbR$.
Combining eqs.~\eqref{eq:Icd} and \eqref{eq:Icn_2} and using the replica symmetric assumption, we obtain:
\begin{align}\label{eq:Ic_term}
    &\frac{2}{\beta}I_\mathrm{int}(p,\bQ_x,\bQ_z) = \inf_{\Gamma_x,\gamma_x,\Gamma_z,\gamma_z} [(p+1) Q_x \Gamma_x - p(p+1) q_x \gamma_x + \alpha(p+1) Q_z \Gamma_z - \alpha p(p+1) q_z \gamma_z \nonumber \\ 
    & - p  \langle \ln (\Gamma_x + \gamma_x + \lambda \Gamma_z + \lambda \gamma_z)  \rangle_\nu - \langle \ln [\Gamma_x - p \gamma_x + \lambda (\Gamma_z- p \gamma_z)] \rangle_\nu] - (\alpha+1)(p+1)\ln 2\pi e/\beta \nonumber \\
    & + (p+1) \ln \frac{2 \pi}{\beta} - p \ln (Q_x-q_x) - \ln (Q_x + pq_x) - \alpha p \ln (Q_z-q_z) - \alpha \ln (Q_z + pq_z).
\end{align}
\paragraph{A note on quenched and annealed averages}
Note that here we did not consider the average over $\bPhi$ to compute $I_\mathrm{int}$. 
Indeed, the result only depends on the eigenvalue distribution of $\bPhi^\dagger \bPhi / n$, which (by hypothesis)
has large deviations in a scale at least $n^{1+\eta}$ with $\eta > 0$. Since we are looking at a scale exponential in $n$, 
we can thus consider that this eigenvalue distribution is equal to its limit value $\nu$. However, one must be careful that this argument
breaks down if our result starts to be sensitive to the extremal eigenvalues of $\bPhi^\dagger \bPhi / n$. Since these variables typically have 
large deviations in the scale $n$ (for instance for Wigner or Wishart matrices \cite{dean2006large}), this could invalidate our calculation. 
This phenomenon is well-known in the study of so-called ``HCIZ'' spherical integrals, cf \cite{guionnet2005fourier} for an example of a rigorous analysis.
We argue in Section~\ref{subsec:quenched_annealed} that this possible issue, not discussed in \cite{takahashi2020macroscopic}, 
never arises for physical values of the overlaps.

\subsubsection{Expressing the \texorpdfstring{$p$}{p}-th moment}

Combining the results of the three previous sections,
we finally obtain the asymptotics of the $p$-th moment of the partition function as:
\begin{align}\label{eq:pth_moment}
    \lim_{n \to \infty} \frac{1}{n} \ln \EE \mathcal{Z}_n(\bY)^p &= \sup_{\substack{Q_x,q_x \\ Q_z,q_z}} [I_0(p,Q_x,q_x) + \alpha I_\mathrm{out}(p,Q_z,q_z) + I_\mathrm{int}(p,Q_x,q_x,Q_z,q_z)],
\end{align}
in which the three terms are given by eqs.~\eqref{eq:prior_term},\eqref{eq:channel_term},\eqref{eq:Ic_term}.

\subsection{The \texorpdfstring{$p \downarrow 0$}{p to 0} limit}

One can easily see that the function described in eq.~\eqref{eq:pth_moment} is analytic in $p$.
The next step of the replica method is to analytically extend this expression to arbitrary $p > 0$, before considering the limit $p \downarrow 0$. 

\subsubsection{Consistency of the limit}\label{subsubsec:consistency_p0}

One must be careful that, when extending our expression to arbitrarily small $p > 0$, 
we satisfy the trivial condition $ \lim_{p \downarrow 0}\ln \EE Z^p = 0$. 
As we will see, this condition will yield constraints on the diagonals of the overlap matrices.
Taking the limit $p = 0$ in the three terms of eq.~\eqref{eq:pth_moment} yields:
\begin{align}
    I_0(0,Q_x,q_x) &= \inf_{\hat{Q}_x} \Big\{\frac{\beta}{2} Q_x \hat{Q}_x + \ln \int_\bbK P_0(\mathrm{d}x) e^{-\frac{\beta \hat{Q}_x}{2} |x|^2}\Big\}, \\
    I_\mathrm{out}(0,Q_z,q_z) &= \inf_{\hat{Q}_z} \Big\{\frac{\beta}{2} Q_z \hat{Q}_z + \frac{\beta}{2} \ln \Big(\frac{2 \pi}{\beta \hat{Q}_z}\Big) \Big\},\\
    I_\mathrm{int}(0,Q_x,q_x,Q_z,q_z) &= \inf_{\Gamma_x,\Gamma_z} \Big[\frac{\beta}{2} Q_x \Gamma_x + \frac{\alpha \beta}{2} Q_z \Gamma_z - \frac{\beta}{2} \langle \ln [\Gamma_x + \lambda \Gamma_z] \rangle_\nu\Big] \\ 
    & - \frac{\beta(\alpha+1)}{2} (1+\ln \frac{2\pi}{\beta})  + \frac{\beta}{2} \ln \frac{2 \pi}{\beta}- \frac{\beta}{2} \ln Q_x - \frac{\alpha \beta}{2} \ln Q_z. \nonumber
\end{align}
One can easily solve the saddle point equations on $Q_z, \hat{Q_z}$, they give $\Gamma_z = 0$ and $\hat{Q}_z = 1/Q_z$.
One can then find all the remaining variables easily: $Q_x = \rho$, $\hat{Q}_x = 0$, $\Gamma_x = \rho^{-1}$, $Q_z = \rho \langle \lambda \rangle_\nu / \alpha$, $\hat{Q}_z = 1/Q_z$, $\Gamma_z = 0$.
Plugging these parameters yields (we drop the vacuous dependency on $q_x,q_z$):
\begin{subnumcases}{\label{eq:0th_moment}}
    I_0(0,Q_x=\rho) = 0, &\\ 
    I_\mathrm{out}\Big(0,Q_z = \frac{\rho \langle \lambda \rangle_\nu}{\alpha}\Big) = \frac{\beta}{2} + \frac{\beta}{2} \ln \Big(\frac{2 \pi \rho \langle \lambda \rangle_\nu}{\beta \alpha}\Big), &\\
    I_\mathrm{int}\Big(0,Q_x = \rho,Q_z = \frac{\rho \langle \lambda \rangle_\nu}{\alpha}\Big) =- \frac{\beta\alpha}{2} \Big(1+\ln \frac{2 \pi}{\beta}\Big)  - \frac{\alpha \beta}{2} \ln \frac{\rho\langle \lambda \rangle_\nu}{\alpha}.&
\end{subnumcases}
Recall that we have 
\begin{align*}
   \lim_{p \downarrow 0} \lim_{n \to \infty} \frac{1}{n} \ln \EE \mathcal{Z}_n(\bY)^p &= I_0 + \alpha I_\mathrm{out} + I_\mathrm{int},
\end{align*}
so that we obtain from eq.~\eqref{eq:0th_moment} that indeed the limit is consistent.

\subsubsection{The replica symmetric result}\label{subsubsec:rs_result}

Using eq.~\eqref{eq:pth_moment} for the $p$-th moment and the consistency conditions we just derived, 
we obtain after using the replica trick:
\begin{align}\label{eq:final_replicas}
    \lim_{n \to \infty} \frac{1}{n} \EE \ln \mathcal{Z}_n(\bY) &= \sup_{q_x,q_z} [I_0(q_x) + \alpha I_\mathrm{out}(q_z) + I_\mathrm{int}(q_x,q_z)],
\end{align}
with the auxiliary functions:
\begin{align*}
    &I_0(q_x) = \inf_{\hat{q}_x \geq 0} \Big[-\frac{\beta\hat{q}_x q_x}{2} + \int_\bbK \mathcal{D}_\beta\xi P_0(\mathrm{d}x) e^{-\frac{\beta\hat{q}_x}{2} |x|^2 + \beta \sqrt{\hat{q}_x} x \cdot \xi} \ln \int_\bbK P_0(\mathrm{d}x) e^{-\frac{\beta\hat{q}_x}{2} |x|^2 + \beta \sqrt{\hat{q}_x} x \cdot \xi}\Big], \\
    &I_\mathrm{out}(q_z) = \inf_{\hat{q}_z \geq 0} \Big\{-\frac{\beta \hat{q}_z q_z}{2} - \frac{\beta}{2} \ln (\hat{Q}_z + \hat{q}_z) + \frac{\beta\hat{q}_z}{2 \hat{Q}_z} + \int \mathrm{d}y \mathcal{D}_\beta \xi \ J(\hat{q}_z,y,\xi) \ln J(\hat{q}_z,y,\xi) \Big\}, \\
    &I_\mathrm{int}(q_x,q_z )= \inf_{\gamma_x,\gamma_z \geq 0} \Big[\frac{\beta}{2} (\rho - q_x) \gamma_x + \frac{\alpha\beta}{2} (Q_z - q_z) \gamma_z - \frac{\beta}{2} \langle \ln (\rho^{-1} + \gamma_x + \lambda \gamma_z)  \rangle_\nu \Big] \\
    & \hspace{2cm} - \frac{\beta}{2} \ln(\rho-q_x) - \frac{\beta q_x}{2 \rho} - \frac{\alpha\beta}{2} \ln(Q_z - q_z) - \frac{\alpha \beta q_z}{2 Q_z}, \nonumber
\end{align*}
with $Q_z = \rho \langle \lambda \rangle_\nu / \alpha$ and $\hat{Q}_z = 1/Q_z$.
Moreover, the domain of the supremum is $q_x \in [0,\rho]$ and $q_z \in [0,Q_z]$. The function $J(\hat{q}_z,y,\xi)$ appearing in the expression of $I_\mathrm{out}$ is defined as:
\begin{align*}
    J(\hat{q}_z,y,\xi) &\equiv \int_\bbK \mathcal{D}_\beta z P_\mathrm{out}\Big(y\Big|\frac{z}{\sqrt{\hat{Q}_z + \hat{q}_z}} + \sqrt{\frac{\hat{q}_z}{\hat{Q}_z(\hat{Q}_z + \hat{q}_z)}}\xi\Big). 
\end{align*}
Note that compared to the calculation presented in the previous sections, we moved a term $(\beta \alpha / 2) (1+  \ln 2\pi/\beta)$ between $I_\mathrm{out}$ and $I_\mathrm{int}$, 
and we also made a few straightforward change of variables in the expression of $I_\mathrm{out}$.
This is exactly the result given in Conjecture~\ref{conjecture:replicas_general}, which ends our replica calculation.

\subsection{Concentration of the spectrum of \texorpdfstring{$\bPhi^\dagger \bPhi / n$}{FF/n} and the absence of saturation}\label{subsec:quenched_annealed}

As emphasized in the end of Section~\ref{subsubsec:Ic_computation}, 
our calculation assumed that the extremization equations on $(\gamma_x,\gamma_z)$ always admitted a solution. Moreover, 
we assumed that this solution is not sensitive to the extremal eigenvalues of $\bPhi^\dagger \bPhi / n$. If this assumption is indeed true, 
the concentration of the spectrum of $\bPhi^\dagger \bPhi / n$ was assumed to be fast enough to justify our calculation.
This important condition can be phrased by saying that for all 
physical values of $(q_x,q_z)$, we must not touch the edge of the spectrum:
\begin{align}\label{eq:condition_no_saturation}
    \frac{1}{\rho} + \gamma_x + \gamma_z \lambda_{\mathrm{min}}(\nu) > 0.
\end{align}
We justify here eq.~\eqref{eq:condition_no_saturation} for all physical values of $(q_x,q_z)$.
We will combine three arguments:
\begin{itemize}
    \item[$(i)$] Note that in the replica calculation, cf Section~\ref{subsubsec:Ic_computation}, 
        the matrix $\bGamma^z$ is assumed to be Hermitian positive in the $p \downarrow 0$ limit. Since $\Gamma_z = 0$, this implies that we must have 
        $\lambda_z \geq 0$.
    \item[$(ii)$] The saddle point equation on $q_x$ yields\footnote{This relation is valid even if $\lambda_x$ would ``saturate'' to a constant value that does not depend on $(q_x,q_z)$.}:
        \begin{align}\label{eq:fixed_point_qx}
            \hat{q}_x = \frac{q_x}{\rho(\rho-q_x)} - \gamma_x.
        \end{align}
    \item[$(iii)$] Finally, we will derive a lower bound on $q_x$. Note that, as one can see in $I_0$ from Section~\ref{subsubsec:rs_result}, $q_x$ is the 
    optimal overlap achievable in the following scalar inference problem \cite{barbier2019optimal}:
    \begin{align}
        Y_0 = \sqrt{\hat{q}_x} X^\star + Z,
    \end{align}
    in which one observes $Y_0$ and is given $P_0$ the prior distribution on $X^\star$, and the noise $Z$ is distributed according to  $\mathcal{N}_\beta(0,1)$.
    It is known that the optimal estimator is given by the average of $\EE[x|Y]$ under the posterior distribution, whose density is proportional to $P_0(x) e^{-\frac{\beta}{2}|y-\sqrt{\hat{q}_x}x|^2}$.
    If this is untractable for generic $P_0$, we can consider a suboptimal estimation by using a Gaussian prior with variance $\rho$ in the estimation procedure (so that the problem is mismatched). This yields the bound:
    \begin{align}
        q_x &\geq \int{\cal D}_\beta \xi\frac{\Big[\int_\bbK P_0(\mathrm{d}x) \ x \ e^{-\frac{\beta\hat{q}_x}{2} |x|^2 + \beta \sqrt{\hat{q}_x} x \cdot \xi}\Big] \cdot \Big[\int_\bbK \mathrm{d}x \ x \ e^{-\frac{\beta|x|^2}{2 \rho}} \ e^{-\frac{\beta \hat{q}_x}{2} |x|^2 + \beta \sqrt{\hat{q}_x} x \cdot \xi}\Big]}{
        \int_\bbK \mathrm{d}x \ e^{-\frac{\beta|x|^2}{2 \rho}} \ e^{-\frac{\beta \hat{q}_x}{2} |x|^2 + \beta \sqrt{\hat{q}_x} x \cdot \xi}}. 
    \end{align}
    This can easily be simplified by performing the Gaussian integral, and yields the bound:
    \begin{align}\label{eq:bound_qx}
        q_x \geq \frac{\rho^2 \hat{q}_x}{1+\rho\hat{q}_x}.
    \end{align}
\end{itemize}
Combining $(ii)$ and $(iii)$ gives:
\begin{align}
    q_x \geq \rho - \frac{\rho-q_x}{1 - \gamma_x (\rho-q_x)}.
\end{align}
Since $q_x \in [0,\rho]$, this implies in particular that $\gamma_x \geq 0$.
Using this along with $(i)$, this implies:
\begin{align}
    \frac{1}{\rho} + \gamma_x + \gamma_z \lambda_\mathrm{min}(\nu) \geq \frac{1}{\rho} > 0,
\end{align}
which is what we wanted to show.
\section{Derivation of the weak-recovery threshold}
\label{sec:app:weakrecovery}
We detail here the derivation of the algorithmic weak-recovery threshold $\alpha_{\WR,\alg}$. As discussed in Section~\ref{sec:weak_recovery}, the weak-recovery threshold can be identified as the sample complexity for which the trivial fixed point $q_x = q_z = \hat{q}_x = \hat{q}_z = \gamma_x = \gamma_z = 0$ of the state evolution equations becomes linearly unstable (when it no longer is a local maximum of the free entropy potential).
Consider therefore the state evolution equations, which we repeat here for convenience in a detailed form:
\begin{subnumcases}{\label{eq:app:se_general}}
    q_x = \int_\bbK \mathcal{D}_\beta \xi \frac{\big|\int_\bbK P_0(\mathrm{d}x) \ x \  e^{- \frac{\beta}{2} \hat{q}_x |x|^2 +\beta \sqrt{\hat{q}_x} x \cdot \xi}\big|^2}{\int_\bbK P_0(\mathrm{d}x) \ e^{- \frac{\beta}{2} \hat{q}_x |x|^2 +\beta \sqrt{\hat{q}_x} x \cdot \xi}},&\\
    q_z = \frac{1}{\hat{Q}_z + \hat{q}_z} \Big[\frac{\hat{q}_z}{\hat{Q}_z} + \int \mathrm{d}y \ \mathcal{D}_\beta \xi \frac{\Big|\int \mathcal{D}_\beta z \ z \  P_\mathrm{out}\Big(y\Big|\frac{z}{\sqrt{\hat{Q}_z + \hat{q}_z}} + \sqrt{\frac{\hat{q}_z}{\hat{Q}_z(\hat{Q}_z + \hat{q}_z)}}\xi\Big)\Big|^2}{\int \mathcal{D}_\beta z P_\mathrm{out}\Big(y\Big|\frac{z}{\sqrt{\hat{Q}_z + \hat{q}_z}} + \sqrt{\frac{\hat{q}_z}{\hat{Q}_z(\hat{Q}_z + \hat{q}_z)}}\xi\Big)}\Big],& \\
    \hat{q}_x = \frac{q_x}{\rho(\rho-q_x)} - \gamma_x ,& \\
    \hat{q}_z = \frac{q_z}{Q_z(Q_z-q_z)} - \gamma_z, &\\
    \rho-q_x = \Big\langle \frac{1}{\rho^{-1} + \gamma_x + \lambda \gamma_z}\Big\rangle_\nu,& \\
    \alpha(Q_z-q_z) = \Big\langle \frac{\lambda}{\rho^{-1} + \gamma_x + \lambda \gamma_z}\Big\rangle_\nu.&
\end{subnumcases}
Letting $q_x = q_z = \hat{q}_x = \hat{q}_z = \gamma_x = \gamma_z = 0$, it is clear that the equations are satisfied if the signal distribution $P_{0}$ and the likelihood $P_{\out}$ satisfy the following symmetry conditions:
\begin{align*}
    |x_1| = |x_2| \Rightarrow P_0(x_1) = P_0(x_2) \hspace{1cm} \mathrm{and} \hspace{0.5cm} |z_1| = |z_2| \Rightarrow P_\mathrm{out}(y|z_1) = P_\mathrm{out}(y|z_2).
\end{align*}
Assuming these conditions hold, we are interested in studying the linear stability of this local maximum. Recalling that $Q_z = \rho \langle \lambda \rangle_\nu / \alpha$,
the first, third and fourth equations of eq.~\eqref{eq:app:se_general} can be linearized:
\begin{align}{\label{eq:app:se_wr_1}}
   \delta q_x = \rho^2 \delta \hat{q}_x, &&
   \delta \hat{q_x} = \frac{\delta q_x}{\rho^2} - \delta \gamma_x, && 
   \delta \hat{q_z} = \frac{\alpha^2 \delta q_z }{\rho^2 \langle \lambda \rangle_\nu^2} - \delta \gamma_z.
\end{align}
Now focusing on the second state evolution equation~\eqref{eq:app:se_general}, it can be linearized to give:
\begin{align}\label{eq:app:se_wr_2}
    \delta q_z &= \frac{\rho^2 \langle \lambda \rangle_\nu^2}{\alpha^2} \delta \hat{q}_z \Big(1 + \int_\bbR \mathrm{d}y \frac{\Big|\int_\bbK \mathcal{D}_\beta z \ (|z|^2-1) \  P_\mathrm{out}\big(y\big|\sqrt{\frac{\rho \langle \lambda \rangle_\nu}{\alpha}} z\big)\Big|^2}{\int_\bbK \mathcal{D}_\beta z \ P_\mathrm{out}\big(y\big|\sqrt{\frac{\rho \langle \lambda \rangle_\nu}{\alpha}} z\big)}\Big).
\end{align}
Finally, it remains to compute the infinitesimal variation for $\delta \gamma_x, \delta \gamma_z$:
\begin{subnumcases}{\label{eq:app:se_wr_3}}
   \delta \gamma_x = \frac{\langle \lambda^2 \rangle_\nu}{\rho^2[\langle \lambda^2 \rangle_\nu - \langle \lambda \rangle_\nu^2]} \delta q_x - \frac{\alpha \langle \lambda \rangle_\nu}{\rho^2[\langle \lambda^2 \rangle_\nu - \langle \lambda \rangle_\nu^2]} \delta q_z,& \\ 
   \delta \gamma_z = -\frac{\langle \lambda \rangle_\nu}{\rho^2[\langle \lambda^2 \rangle_\nu - \langle \lambda \rangle_\nu^2]} \delta q_x + \frac{\alpha}{\rho^2[\langle \lambda^2 \rangle_\nu - \langle \lambda \rangle_\nu^2]} \delta q_z.&
\end{subnumcases}
Combining eqs.~\eqref{eq:app:se_wr_1},\eqref{eq:app:se_wr_2},\eqref{eq:app:se_wr_3}, we can simplify the system to a closed set equations with only $(\delta q_x,\delta \hat{q}_x,\delta q_z,\delta \hat{q}_z)$. Given the usual heuristics of the replica method and its link with the state evolution equations of message-passing algorithms \cite{takahashi2020macroscopic,zdeborova2016statistical,krzakala2012probabilistic}, 
one can conjecture that the simplest iteration scheme corresponds to the state evolution of the G-VAMP message passing algorithm:
\begin{subnumcases}{\label{eq:wr_linearized_final}}
   \delta q_x^{t+1} = \rho^2 \delta \hat{q}_x^t ,& \\ 
   \delta q_z^{t+1} = \frac{\rho^2 \langle \lambda \rangle_\nu^2}{\alpha^2} \delta \hat{q}_z^t \Big(1 + \int_\bbR \mathrm{d}y \frac{\Big|\int_\bbK \mathcal{D}_\beta z \ (|z|^2-1) \  P_\mathrm{out}\big(y\big|\sqrt{\frac{\rho \langle \lambda \rangle_\nu}{\alpha}} z\big)\Big|^2}{\int_\bbK \mathcal{D}_\beta z \ P_\mathrm{out}\big(y\big|\sqrt{\frac{\rho \langle \lambda \rangle_\nu}{\alpha}} z\big)}\Big),& \\
   \delta \hat{q_x}^t = -\frac{\langle \lambda \rangle_\nu^2}{\rho^2[\langle \lambda^2 \rangle_\nu - \langle \lambda \rangle_\nu^2]} \delta q_x^t + \frac{\alpha \langle \lambda \rangle_\nu}{\rho^2[\langle \lambda^2 \rangle_\nu - \langle \lambda \rangle_\nu^2]} \delta q_z^t,& \\ 
   \delta \hat{q_z}^t =  \frac{\langle\lambda\rangle_\nu}{\rho^2[\langle \lambda^2 \rangle_\nu - \langle \lambda \rangle_\nu^2]} \delta q_x^t + [\frac{\alpha^2}{\rho^2 \langle \lambda \rangle_\nu^2} - \frac{\alpha}{\rho^2[\langle \lambda^2 \rangle_\nu - \langle \lambda \rangle_\nu^2]}] \delta q_z^t.&
\end{subnumcases}
From these equations, one can easily see that a linear instability of the trivial fixed points appears at $\alpha = \alpha_{\WR,\alg}$ satisfying the equation:
\begin{align}\label{eq:app:wr_threshold_general}
    \alpha_{\WR,\alg} = \frac{\langle \lambda \rangle_\nu^2}{\langle \lambda^2 \rangle_\nu}\Big(1+\Big[\int_\bbR \mathrm{d}y \frac{\Big|\int_\bbK \mathcal{D}_\beta z \ (|z|^2-1) \  P_\mathrm{out}\big(y\big|\sqrt{\frac{\rho \langle \lambda \rangle_\nu}{\alpha_{\WR,\alg}}} z\big)\Big|^2}{\int_\bbK \mathcal{D}_\beta z \ P_\mathrm{out}\big(y\big|\sqrt{\frac{\rho \langle \lambda \rangle_\nu}{\alpha_{\WR,\alg}}} z\big)}\Big]^{-1}\Big).
\end{align}
Indeed at $\alpha = \alpha_{\WR,\alg}$, the modulus of all the eigenvalues of the size-$4$ matrix of the linear system~\eqref{eq:wr_linearized_final} cross $1$.
\section{The full recovery transition}\label{sec:app_perfect_recovery}

In this section, we assume a Gaussian standard prior $P_0 = \mathcal{N}_\beta(0,1)$ and a noiseless phase retrieval channel,
and we show that information-theoretic full recovery is achieved exactly at $\alpha = \alpha_{\FR,\IT} \equiv \beta(1-\nu(\{0\}))$.
We can assume without loss of generality that $\langle \lambda \rangle_\nu = \alpha$, as this amounts to a simple rescaling of $\bPhi$, irrelevant under the noiseless channel.
This implies in particular that $Q_z = \hat{Q}_z = 1$.

\subsection{The state evolution equations}

Since we assumed a Gaussian prior, we have, with $P_\mathrm{out}(y|z) = \delta(y-|z|^2)$:
\begin{subnumcases}{\label{eq:se_pr_gaussian_prior}}
\label{eq:se_pr_gaussian_prior_qz}
    q_z = \frac{1}{1 + \hat{q}_z} \Big[\hat{q}_z + \int \mathrm{d}y\int_\bbK \mathcal{D}_\beta \xi \frac{\Big|\int_\bbK \mathcal{D}_\beta z \ z \  P_\mathrm{out}\Big(y\Big|\frac{z}{\sqrt{1 + \hat{q}_z}} + \sqrt{\frac{\hat{q}_z}{1 + \hat{q}_z}}\xi\Big)\Big|^2}{\int_\bbK \mathcal{D}_\beta z P_\mathrm{out}\Big(y\Big|\frac{z}{\sqrt{1 + \hat{q}_z}} + \sqrt{\frac{\hat{q}_z}{1 + \hat{q}_z}}\xi\Big)}\Big],& \\
\label{eq:se_pr_gaussian_prior_qhx}
    \hat{q}_x = \frac{q_x}{1-q_x},& \\
\label{eq:se_pr_gaussian_prior_qhz}
    \hat{q}_z = \frac{q_z}{1-q_z} - \gamma_z, &\\
\label{eq:se_pr_gaussian_prior_qx}
    q_x = \alpha \gamma_z (1-q_z), & \\
\label{eq:se_pr_gaussian_prior_gammaz}
    \alpha(1-q_z) = \Big\langle \frac{\lambda}{1 + \lambda \gamma_z}\Big\rangle_\nu.&
\end{subnumcases}
Comparing these equations to Conjecture~\ref{conjecture:replicas_general}, 
one can see that we imposed $\gamma_x = 0$, a straightforward consequence of the Gaussian prior (see Section~\ref{sec:app_equivalence_replicas} where this calculation is detailed for a different purpose).

\subsection{Noisy phase retrieval with small variance}

We wish to show that the free entropy of the full recovery solution is the global maximum of the free entropy potential 
for $\alpha > \alpha_\mathrm{IT}$, while it is never the case for $\alpha < \alpha_\mathrm{IT}$.
However, under a noiseless channel, the free entropy potential might diverge in this point, which 
indicates towards a regularization procedure.
Therefore we consider a noisy Gaussian channel with noise $\Delta > 0$:
\begin{align}
    P_\mathrm{out}(y|z) = \frac{1}{\sqrt{2 \pi \Delta}} \exp \Big\{-\frac{1}{2\Delta}(y-|z|^2)^2\Big\}.
\end{align}
We will compute the limit, as $\Delta \downarrow 0$, of the free entropy of the ``almost perfect'' recovery fixed point.
We look for a solution close to the point which corresponds to the best possible recovery:
\begin{subnumcases}{}
    q_x = 1 - \nu(\{0\}), &\\
    q_z = 1.&
\end{subnumcases}
Indeed it is easy to see that $q_x \leq 1 -\nu(\{0\})$ since $\mathrm{rk}[\bPhi^\dagger \bPhi] \sim n (1-\nu(\{0\}))$.
We are thus looking for a fixed point of the state evolution equations~\eqref{eq:se_pr_gaussian_prior}
 that satisfies:
 \begin{subnumcases}{}
    q_x = 1 - \nu(\{0\}) + \smallO_\Delta(1), & \\
    q_z = 1 + \smallO_\Delta(1), & \\ 
    \hat{q}_x^{-1} = \nu(\{0\}) / (1-\nu(\{0\})) + \smallO_\Delta(1), &\\
    \hat{q}_z^{-1} = \smallO_\Delta(1).&
 \end{subnumcases}
 Let us now precise the asymptotics of these quantities as $\Delta \downarrow 0$.
By eq.~\eqref{eq:se_pr_gaussian_prior_qx}, we find easily:
\begin{align}
    \gamma_z \sim \frac{1-\nu(\{0\})}{\alpha(1-q_z)}.
\end{align}
Then from eq.~\eqref{eq:se_pr_gaussian_prior_qhz}, we also have:
\begin{align}\label{eq:qhz_scaling}
    \hat{q}_z \sim \frac{\alpha-1+\nu(\{0\})}{\alpha(1-q_z)}.
\end{align}
Note that if $\alpha \leq 1$, then necessarily $\nu(\{0\}) \geq 1-\alpha$, so that the quantity in the numerator is always positive.
We now turn to eq.~\eqref{eq:se_pr_gaussian_prior_qz}.
We assume the scaling $\hat{q}_z^{-1} = c \Delta + \smallO_\Delta(\Delta)$.
We have by Gaussian integration by parts and using the specific form of $P_\mathrm{out}$:
\begin{align*}
     \int \mathrm{d}y \mathcal{D}_\beta \xi & \frac{\Big|\int \mathcal{D}_\beta z \ z \  P_\mathrm{out}\Big(y\Big|\frac{z}{\sqrt{1 + \hat{q}_z}} + \sqrt{\frac{\hat{q}_z}{1 + \hat{q}_z}}\xi\Big)\Big|^2}{\int \mathcal{D}_\beta z P_\mathrm{out}\Big(y\Big|\frac{z}{\sqrt{1 + \hat{q}_z}} + \sqrt{\frac{\hat{q}_z}{1 + \hat{q}_z}}\xi\Big)} \\
     &= \frac{1}{(1+\hat{q}_z)} \int \mathrm{d}y \mathcal{D}_\beta \xi \frac{\Big|\int \mathcal{D}_\beta z \  P_\mathrm{out}'\Big(y\Big|\frac{z}{\sqrt{1 + \hat{q}_z}} + \sqrt{\frac{\hat{q}_z}{1 + \hat{q}_z}}\xi\Big)\Big|^2}{\int \mathcal{D}_\beta z P_\mathrm{out}\Big(y\Big|\frac{z}{\sqrt{1 + \hat{q}_z}} + \sqrt{\frac{\hat{q}_z}{1 + \hat{q}_z}}\xi\Big)} \sim \frac{4}{\Delta(1+\hat{q}_z)} \sim 4c.
\end{align*}
Gaussian integration by parts and our conventions for derivatives of real functions of complex variables are summarized in Section~\ref{subsec:app_derivatives}.
This yields that $1-q_z = \Delta c(1-4c) + \smallO_\Delta(1)$.
Combining this result with eq.~\eqref{eq:qhz_scaling}, we have
\begin{align*}
    c (1-4c)&= c \Big[\frac{\alpha-1+\nu(\{0\})}{\alpha}\Big].
\end{align*}
This implies $c = (1-\nu(\{0\}))/(4 \alpha)$, and we finally obtain the leading order asymptotics of $q_z,\hat{q_z},\gamma_z$ as $\Delta \to 0$:
\begin{subnumcases}{\label{eq:final_asymptotics}}
   \hat{q}_z = \frac{4 \alpha}{(1-\nu(\{0\}))\Delta} + \smallO_\Delta\(\Delta^{-1}\), & \\ 
   1- q_z = \frac{(1-\nu(\{0\})(\alpha -1 + \nu(\{0\}))}{4 \alpha ^2} \Delta + \smallO_\Delta(\Delta), & \\
   \gamma_z = \frac{4 \alpha}{\Delta(\alpha-1+\nu(\{0\}))}+ \smallO_\Delta(\Delta^{-1}) . &
\end{subnumcases}
Let us now compute the asymptotics of the three auxiliary functions $I_0,I_\mathrm{out}$ and $I_\mathrm{int}$ of Conjecture~\ref{conjecture:replicas_general}:
\begin{align*}
    I_0(q_x) &= \frac{\beta}{2} [q_x + \ln(1-q_x)], \\
    I_\mathrm{out}(q_z) &= -\frac{\beta\hat{q}_z q_z}{2} - \frac{\beta}{2} \ln (1 + \hat{q}_z) + \frac{\beta \hat{q}_z}{2} + \int \mathrm{d}y \mathcal{D}\xi \ J(\hat{q}_z,y,\xi) \ln J(\hat{q}_z,y,\xi), \\
    J(\hat{q}_z,y,\xi) &\equiv \int \mathcal{D}z P_\mathrm{out}\Big(y\Big|\frac{z}{\sqrt{1 + \hat{q}_z}} + \sqrt{\frac{\hat{q}_z}{1 + \hat{q}_z}}\xi\Big), \nonumber \\
    I_\mathrm{int}(q_x,q_z) &= \frac{\beta}{2}[\alpha (1 - q_z) \gamma_z - \langle \ln (1 + \lambda \gamma_z) \rangle_\nu - \ln(1-q_x) - q_x - \alpha \ln(1 - q_z) - \alpha q_z ].
\end{align*}
Using eq.~\eqref{eq:final_asymptotics} and the specific form of the channel, we reach:
\begin{align*}
    I_0(q_x) + I_\mathrm{int}(q_x,q_z) &\sim - \frac{\beta(\alpha-1+\nu(\{0\}))}{2} \ln \Delta, \\
    I_\mathrm{out}(q_z) &\sim \frac{(\beta-1)}{2} \ln \Delta.
\end{align*}
Therefore when considering the total free entropy we have
\begin{align*}
    I_0(q_x) + I_\mathrm{int}(q_x,q_z) + \alpha I_\mathrm{out}(q_z) &\sim \frac{\alpha(\beta-1) - \beta(\alpha-1+\nu(\{0\}))}{2} \ln \Delta , \\
    & \sim \frac{\beta(1-\nu(\{0\}))-\alpha}{2} \ln \Delta.
\end{align*}
This implies that the full recovery point has a free entropy of $- \infty$ for $\alpha < \alpha_{\FR,\IT} \equiv \beta(1-\nu(\{0\}))$, and $+ \infty$ for $\alpha > \alpha_{\FR,\IT}$. 
Thus this point is always the global maximum of the free entropy for $\alpha > \alpha_{\FR,\IT}$, while it 
is never the case for $\alpha < \alpha_{\FR,\IT}$, which ends our argument.
\section{Proof of Theorem~\ref{thm:main_product_gaussian_rotinv}}\label{sec:app_proof_theorem}

In all this section, we provide the proof of Theorem~\ref{thm:main_product_gaussian_rotinv} under \ref{hyp:channel},\ref{hyp:prior},\ref{hyp:phi_product},\ref{hyp:convergence_spectrum_B}, and we will work under these hypotheses. In Section~\ref{subsec:app_gaussian_matrix_proof}, we show how the proof can be extended to hypotheses~\ref{hyp:channel},\ref{hyp:any_p0_gaussian_phi}.
\\
First, we simplify the conjectured expression of the free entropy of Conjecture~\ref{conjecture:replicas_general} using the particular form of the prior $P_0$ and of the sensing matrix $\bPhi$.
Finally, using \ref{hyp:prior},\ref{hyp:phi_product},\ref{hyp:convergence_spectrum_B} and a proof similar to the one of \cite{barbier2019optimal,aubin2018committee}, we give a rigorous 
derivation of this simplified expression.
Note that with respect to the analysis of \cite{barbier2019optimal,aubin2018committee}, there are two main novelties in our setting:
\begin{itemize}
  \item[$(i)$] The sensing matrix $\bPhi$ is not i.i.d.\ but has a well-controlled structure, see \ref{hyp:phi_product}.
  \item[$(ii)$] The variables can be complex numbers. We will argue that the arguments generalize to this case. The physical reason of this generalization is that even in the complex setting, the overlap will concentrate 
  on a real positive number, as a consequence of Bayes-optimality.
\end{itemize}
First, we note that we can simplify the replica conjecture under the considered hypotheses:
\begin{proposition}\label{prop:replicas_product_structure}
  Under \ref{hyp:channel},\ref{hyp:prior},\ref{hyp:phi_product},\ref{hyp:convergence_spectrum_B}, the replica conjecture~\ref{conjecture:replicas_general} for the free entropy $f_n \equiv \frac{1}{n} \EE \ln \mathcal{Z}_n(\bY)$ is equivalent to:
  \begin{align}\label{eq:Phi_noniid}
    \lim_{n \to \infty} f_n &= \sup_{\hat{q}\geq 0} \inf_{q \in [0,Q_z]}\Big[\frac{\beta\hat{q}}{2} (\EE_{\nu_B}[X] - \delta q) - \frac{\beta}{2} \EE_{\nu_B} \ln (1+\hat{q}X) + \alpha \Psi_{\mathrm{out}}(q)\Big],
  \end{align}
  with $Q_z = \EE_{\nu_B}[X]/\delta$ and $\Psi_{\mathrm{out}}$ defined in terms of the auxiliary functions introduced in eq.~\eqref{eq:auxiliary}:
  \begin{align*}
    \Psi_{\mathrm{out}}(q) \equiv \mathbb{E}_{\xi}\int_\bbR \mathrm{d} y ~ \mathcal{Z}_{\out}(y;\sqrt{q}\xi, Q_z - q) \ln \mathcal{Z}_{\out}(y;\sqrt{q}\xi, Q_z - q).
  \end{align*}
\end{proposition}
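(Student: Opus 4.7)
The plan is to derive \eqref{eq:Phi_noniid} directly from Conjecture~\ref{conjecture:replicas_general} by performing the simplifications afforded by the hypotheses \ref{hyp:prior} and \ref{hyp:phi_product}. I would proceed in three steps.

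\textbf{Step 1 (Gaussian prior).} With $P_0 = \mathcal{N}_\beta(0,1)$ (so $\rho = 1$), direct Gaussian integration gives $\mathcal{Z}_0(b,a) = (1+a)^{-\beta/2}\exp(\beta|b|^2/(2(1+a)))$. Substituting this into the definition of $I_0$ and optimising over $\hat q_x$ yields the Nishimori-type relation $\hat q_x = q_x/(1-q_x)$ and the closed form $I_0(q_x) = \tfrac{\beta}{2}[q_x + \ln(1-q_x)]$. Crucially, these pieces cancel exactly against the matching $q_x$- and $\ln(1-q_x)$-terms appearing in $I_\mathrm{int}(q_x,q_z)$, so $I_0 + I_\mathrm{int}$ depends on $q_x$ only through the linear expression $\tfrac{\beta}{2}(1-q_x)\gamma_x$. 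Hence the saddle-point over $q_x$ forces $\gamma_x = 0$, and the variables $q_x, \hat q_x$ and $\gamma_x$ all drop out of the problem. This matches the observation noted later in the paper that a Gaussian prior enforces $\gamma_x = 0$ at the saddle point.

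\textbf{Step 2 (spectrum change).} After Step~1 the remaining dependence on $\nu$ sits in $\langle \ln(1+\gamma_z\lambda)\rangle_\nu$. Using $\bPhi^\dagger\bPhi/n = \bB^\dagger\bW^\dagger\bW\bB/(np)$ together with $\det(I+AB)=\det(I+BA)$ and the Gaussian-integral representation $\det(I_n + \gamma_z\bPhi^\dagger\bPhi/n)^{-\beta/2} = \int \mathcal{D}_\beta \bu\,\exp(-\tfrac{\beta\gamma_z}{2np}\lVert\bW\bB\bu\rVert^2)$, I would integrate out the iid Gaussian factor $\bW$ explicitly. A scalar Laplace argument on the resulting expression (equivalent to the Tulino--Verdu Shannon-transform identity for Gaussian-times-deterministic products) rewrites the spectral term together with the accompanying linear contribution $\tfrac{\alpha\beta}{2}(Q_z-q_z)\gamma_z$ in the variational form $\sup_{\hat q \ge 0}[\tfrac{\beta\hat q}{2}(\EE_{\nu_B}[X]-\delta q_z) - \tfrac{\beta}{2}\EE_{\nu_B}\ln(1+\hat q X)]$, in which $\gamma_z$ has been eliminated in favour of the scalar $\hat q$ of \eqref{eq:Phi_noniid}. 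The normalisation identity $Q_z = \rho\langle\lambda\rangle_\nu/\alpha = \EE_{\nu_B}[X]/\delta$ follows from $\EE[\bW^\dagger\bW/p] = (m/p) I_p$ and $\bPhi^\dagger\bPhi/n = \bB^\dagger\bW^\dagger\bW\bB/(np)$.

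\textbf{Step 3 and obstacle.} What remains, the pieces $\alpha I_\mathrm{out}(q_z) - \tfrac{\alpha\beta}{2}\ln(Q_z-q_z) - \tfrac{\alpha\beta q_z}{2 Q_z}$, must be shown to equal $\alpha\Psi_\mathrm{out}(q)$ with $q = q_z$. This follows by explicitly solving the inner optimization over $\hat q_z$ inside $I_\mathrm{out}$ and performing the change of variables $Q_z - q_z = 1/(\hat Q_z+\hat q_z)$, $\hat q_z/(\hat Q_z(\hat Q_z+\hat q_z)) = q_z$, which recasts the arguments of $\mathcal{Z}_\mathrm{out}$ exactly into $(\sqrt{q_z}\xi, Q_z-q_z)$ and absorbs all leftover additive constants. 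Collecting everything yields \eqref{eq:Phi_noniid}. The main obstacle is Step~2: the dual rewriting of a spectral average against $\nu$ into a scalar variational formula involving only $\nu_B$. This step relies crucially on the Gaussianity of $\bW$, not merely rotational invariance, and demands careful bookkeeping of prefactors so that the $\sup_{\hat q}\inf_{q}$ saddle-structure matches the claimed form exactly; an alternative, equivalent route is to replace the model by the two-layer one $\bz^\star = \bB\bx^\star/\sqrt{p}$, $Y_\mu = \varphi_\mathrm{out}((\bW\bz^\star/\sqrt{n})_\mu, A_\mu)$, and diagonalise the effective Gaussian prior $\mathcal{N}_\beta(0,\bB\bB^\dagger/p)$ using the rotational invariance of $\bW$, which makes the appearance of $\nu_B$ manifest. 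Step~3 is by comparison purely algebraic.
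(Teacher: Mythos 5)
Your decomposition mirrors the paper's: (i) use the Gaussian prior to collapse $I_0 + I_\mathrm{int}$ and kill the $q_x,\hat q_x,\gamma_x$ variables; (ii) convert the $\nu$-average of $\ln(1+\lambda\gamma_z)$ into a variational formula over $\nu_B$, using the product structure $\bPhi=\bW\bB/\sqrt p$; (iii) solve the inner $\hat q_z$-optimization in $I_\mathrm{out}$ and change variables to recover $\Psi_\mathrm{out}$. Steps (i) and (iii) follow the paper essentially verbatim (cf.\ eqs.~\eqref{eq:Iint_gaussian_prior}--\eqref{eq:f_gammaz}). In step (ii), however, you take a genuinely different route from the paper's Lemma~\ref{lemma:technical_phi_structure}. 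You propose to write $\det(I_n+\gamma_z\bPhi^\dagger\bPhi/n)^{-\beta/2}$ as a Gaussian integral over $\bu$, integrate $\bW$ out explicitly (valid only because $\bW$ is i.i.d.\ Gaussian), and then perform a scalar Laplace argument on $\lVert\bB\bu\rVert^2/(np)$; the paper instead invokes the log-potential representation~\eqref{eq:log_potential} via the $R$-transform together with the Marchenko--Pastur $R$-transform~\eqref{eq:R_transform_product} for products. These are two standard ways of arriving at the Tulino--Verd\'u--type Shannon-transform identity, and both legitimately isolate the Gaussianity of $\bW$ as the crucial ingredient; the $R$-transform version is shorter at this level but hides the concentration used to equate quenched and annealed averages, which your route would have to make explicit (the annealed Gaussian integral over $\bW$ computes $\frac1n\ln\EE_\bW\det^{-\beta/2}$, which equals $-\frac{\beta}{2n}\EE_\bW\ln\det$ only up to fluctuations that must be controlled).

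One point you gloss over, and which the paper spends nontrivial effort on: the reasoning "the saddle over $q_x$ forces $\gamma_x=0$" and the subsequent identification of $\hat q_z$, $\gamma_z$, and the new variable $\hat q$ all rely on inverting the order of $\sup$ and $\inf$ in the four-variable potential. These inversions are not automatic; the paper justifies each one via the convexity-based swap Lemma~\ref{lemma:supinf_barbier} and its two-variable Corollary~\ref{corollary:supinf_two_variables}, and the statement of Proposition~\ref{prop:replicas_product_structure} is precisely the $\sup_{\hat q}\inf_q$ \emph{minimax} form, not a joint stationary point. Your sketch treats these as ordinary saddle-point manipulations, which is the right intuition but would need the same convexity/Lipschitz arguments to be made into an actual identity between the two variational formulas, rather than a claim about their common stationary points.
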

\noindent
Proposition~\ref{prop:replicas_product_structure} is proven in Section~\ref{sec:app_equivalence_replicas}.
To prove the free entropy statement of Theorem~\ref{thm:main_product_gaussian_rotinv}, we therefore just need to show:
\begin{lemma}\label{lemma:noniid_phi}
  Under the assumptions of Proposition~\ref{prop:replicas_product_structure}, the limit of the free entropy $f_n \equiv \frac{1}{n} \EE \ln \mathcal{Z}_n(\bY)$ is given by eq.~\eqref{eq:Phi_noniid}.
\end{lemma}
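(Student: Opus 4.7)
\textbf{Proof plan for Lemma~\ref{lemma:noniid_phi}.}
The starting observation is that, because $P_0 = \mathcal{N}_\beta(0,1)$ and $\bPhi = \bW\bB/\sqrt{p}$ with $\bW$ independent of $\bB$, one can introduce the auxiliary variable $\bZ^\star \equiv \bB\bX^\star/\sqrt{n} \in \bbK^p$. Conditionally on $\bB$, $\bZ^\star$ is a centred Gaussian with covariance $\bB\bB^\dagger/n$, whose spectrum converges (weakly and in largest eigenvalue, by \ref{hyp:convergence_spectrum_B}) to the measure induced by $\nu_B$. The likelihood rewrites as $Y_\mu = \varphi_\mathrm{out}((\bW\bZ^\star)_\mu/\sqrt{p},A_\mu)$, i.e.\ a generalized linear model driven by an i.i.d.\ Gaussian $\bW$ with structured Gaussian latent $\bZ^\star$. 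This puts us in the framework of \cite{barbier2019optimal,aubin2018committee,barbier2019adaptive}, with one nontrivial modification: the prior on the latent $\bZ^\star$ is no longer separable, it is a Gaussian with covariance matrix $\bB\bB^\dagger/n$.

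The proof is then an adaptive interpolation à la \cite{barbier2019adaptive}. First, perturb the posterior by a Gaussian side-information channel of vanishing strength on both $\bX^\star$ and $\bZ^\star$: this does not change the limit of $f_n$, but by the standard arguments of \cite{barbier2019adaptive} it guarantees concentration of the overlaps $Q_x^{(n)} = \bX^\star \cdot \hat\bX/n$ and $Q_z^{(n)} = \bZ^\star \cdot \hat\bZ/p$ thanks to the Nishimori identity. Next, introduce an interpolating model indexed by $t \in [0,1]$ where the action of $\bW$ in $\varphi_\mathrm{out}$ is scaled by $\sqrt{1-t}$ and compensated by an effective noise, and a decoupled Gaussian scalar channel of cumulative strength $R(t) = \int_0^t r(s)\,\mathrm{d}s$ is added on each component of $\bZ^\star$. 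At $t=0$ one recovers the original GLM, while at $t=1$ the $\bW$-channel is replaced by $p$ decoupled Gaussian denoising problems for $\bZ^\star$, coupled to $\bX^\star$ only through the deterministic relation $\bZ^\star = \bB\bX^\star/\sqrt{n}$.

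Differentiating the interpolating free entropy in $t$ and using Gaussian integration by parts in $\bW$, one obtains a sum rule of the form
\begin{equation*}
f_n = \alpha\int_0^1 \Psi_\mathrm{out}(R(t))\,\mathrm{d}t + \Psi_{\mathrm{denoise}}(R(1);\bB) - \frac{\beta}{2}\int_0^1 r(t)\,\mathbb{E}\big[(Q_z^{(n)} - r(t))^2\big]\,\mathrm{d}t + o_n(1),
\end{equation*}
where $\Psi_{\mathrm{denoise}}(q;\bB)$ is the free entropy of the purely Gaussian problem $\bZ^\star = \bB\bX^\star/\sqrt{n}$ observed through a side channel of strength $q$, and the last, non-positive remainder accounts for the coupling. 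Choosing $r(\cdot)$ as the solution of $r(t) = \mathbb{E}[Q_z^{(n)}]$ for the interpolated model (as in \cite{barbier2019adaptive}) makes the remainder an $o_n(1)$ by the concentration of the overlap, giving the matching bound; the opposite bound follows from the Guerra-Toninelli argument by choosing $r$ to make the remainder a nonnegative perfect square. Passing to the limit, the saddle-point structure $\sup_{\hat q}\inf_q$ of eq.~\eqref{eq:Phi_noniid} arises from the Legendre-duality between the strength $\hat q$ of the effective channel on $\bX^\star$ and the overlap $q$ on $\bZ^\star$, while the $\mathbb{E}_{\nu_B}$ terms come from the explicit evaluation of $\Psi_{\mathrm{denoise}}$: since the problem is Gaussian, it diagonalises in the eigenbasis of $\bB^\dagger\bB/n$, and the limit $n\to\infty$ replaces the empirical spectrum by $\nu_B$, producing $\tfrac{\beta\hat q}{2}(\mathbb{E}_{\nu_B}[X] - \delta q) - \tfrac{\beta}{2}\mathbb{E}_{\nu_B}\ln(1+\hat q X)$.

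The main obstacle is the overlap-concentration step, because $\bZ^\star$ has a non-separable prior: the standard proofs in \cite{barbier2019optimal,barbier2019adaptive} exploit the i.i.d.\ structure of the latent to bound the fluctuations of $Q_z^{(n)}$, and here one must substitute that with a spectral argument based on hypothesis~\ref{hyp:convergence_spectrum_B}. Controlling $\lambda_\mathrm{max}(\bB^\dagger\bB/n)$ uniformly is what makes the sub-exponential fluctuations of $Q_z^{(n)}$ tractable. The complex case $\beta=2$ proceeds identically using Wirtinger calculus and the fact that, by Bayes-optimality, the overlap concentrates on a non-negative real value, so that every bilinear form becomes its real part and the argument is symbolic-identical to $\beta=1$ up to the overall factor of $\beta$.
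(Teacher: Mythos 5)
Your overall architecture coincides with the paper's: rewrite the model as a generalized linear model with i.i.d.\ Gaussian sensing matrix $\bW$ and a structured Gaussian latent $\bZ^\star$, then run an adaptive interpolation with a sum rule, overlap concentration, and matching upper/lower bounds, evaluating the $t=1$ denoising term via the spectrum $\nu_B$. Two steps of your plan, however, would not go through as written. First, you keep $\bZ^\star = \bB\bX^\star/\sqrt{n}$ with its full non-separable covariance $\bB\bB^\dagger/n$ and correctly flag overlap concentration as "the main obstacle", but you do not resolve it. The paper's resolution is a rotation you skip: writing $\bB/\sqrt{n} = \bU\bS\bV^\dagger$ and absorbing $\bU$ into the rotationally invariant $\bW$ and $\bV^\dagger$ into the isotropic Gaussian prior, the latent becomes $Z^\star_k = S_k X^\star_k$ with independent components, i.e.\ a \emph{separable} (non-identically distributed) prior $P_0^{(\bS)}$. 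This is what lets the bounded-differences/Poincar\'e concentration machinery of \cite{barbier2019optimal,barbier2019overlap} apply essentially verbatim, with \ref{hyp:convergence_spectrum_B} only needed to control $\max_k S_k^2$ and to pass the empirical spectral sums to $\nu_B$. Without this reduction, the "spectral argument" you invoke is not a known off-the-shelf substitute.

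Second, your sum rule and your strategy for the second bound are structurally wrong for a GLM. The remainder produced by differentiating the interpolating free entropy is not $-\tfrac{\beta}{2}\int r(t)\,\EE[(Q_z^{(n)}-r(t))^2]\,\mathrm{d}t$ but a \emph{product of two distinct differences},
\begin{equation*}
-\frac{1}{2\beta}\,\EE\Big\langle \Big(\frac{1}{n}\sum_{\mu=1}^m u'_{Y_{t,\mu}}(S_{t,\mu})^\dagger u'_{Y_{t,\mu}}(s_{t,\mu}) - \beta^2\delta\, r_\epsilon(t)\Big)\cdot\big(Q - q_\epsilon(t)\big)\Big\rangle_{n,t,\epsilon},
\end{equation*}
where the first factor is a channel-side overlap and the second the latent overlap; there are correspondingly \emph{two} interpolation paths ($R_1$ on the prior side, $R_2$ on the channel side), which your single $R(t)$ conflates. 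Because the two factors are different objects, no choice of $r$ turns this into a nonnegative perfect square, so the Guerra--Toninelli completion-of-squares route you propose for "the opposite bound" fails. In the paper both bounds kill the remainder the same way --- solve $R_2'(t)=\mathrm{Tr}_\beta[\EE\langle Q^{(M)}\rangle_{n,t,\epsilon}]$ so that overlap concentration plus Cauchy--Schwarz makes it $\smallO_n(1)$ --- and the two bounds differ only in the choice of $R_1$ (constant slope $r$ for the lower bound; the coupled ODE $r_\epsilon(t)=\tfrac{2\alpha}{\beta\delta}\Psi_\mathrm{out}'(q_\epsilon(t))$ followed by Jensen's inequality and convexity of $\Psi_0^{(\nu)},\Psi_\mathrm{out}$ for the upper bound). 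Relatedly, $\Psi_\mathrm{out}$ enters the sum rule evaluated at the accumulated SNR $\int_0^1 q_\epsilon$, not inside the time integral as $\int_0^1\Psi_\mathrm{out}(R(t))\,\mathrm{d}t$; Jensen is precisely what bridges the two for the upper bound.
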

\noindent 
The following of this section  is dedicated to the proof of Lemma~\ref{lemma:noniid_phi}. 
We will conclude the proof of Theorem~\ref{thm:main_product_gaussian_rotinv} in Section~\ref{subsec:app_proof_mmse} and Section~\ref{subsec:app_gaussian_matrix_proof}, 
dedicated respectively to the proof of the MMSE statement and the extension of the proof to hypotheses \ref{hyp:channel},\ref{hyp:any_p0_gaussian_phi}.
\\
The main idea of our proof is to reduce the problem of Lemma~\ref{lemma:noniid_phi}
to a Generalized Linear Model with a Gaussian sensing matrix, 
but a non-i.i.d.\ prior.
We make use of the ``SVD'' decomposition of $\bB/\sqrt{n} = \bU \bS \bV^\dagger$, with $\bU \in \mathcal{U}_\beta(p)$, $\bV \in \mathcal{U}_\beta(n)$, and 
$\bS \in \bbR^{p \times n}$ a pseudo-diagonal matrix with positive elements.
Leveraging on the fact that the prior $P_0$ is Gaussian, and that $\bW$ is an i.i.d.\ Gaussian matrix independent of $\bB$, 
one can see that our estimation problem is formally equivalent to an usual Generalized Linear Model with $m$ measurements, a signal of dimension $p$, and a Gaussian i.i.d.\ sensing matrix.
This is very close to the setup of \cite{barbier2019optimal}, a key difference being that here
the prior distribution on the data $\bZ^\star \in \bbK^{p}$ is defined as
\begin{itemize}
   \item If $\delta \leq 1$, for every $k \in \{1,\cdots,p\}$, $Z^\star_k$ is distributed as $S_k X^\star_k$ with $X^\star_k \overset{\mathrm{i.i.d.}}{\sim} P_0$.
   \item If $\delta \geq 1$, for every $k \in \{1,\cdots,n\}$, $Z^\star_k$ is distributed as $S_k X^\star_k$ with $X^\star_k \overset{\mathrm{i.i.d.}}{\sim} P_0$, while for every 
   $k \in \{n+1, \cdots,p\}$, $Z^\star_k$ is almost surely $0$.
\end{itemize}
More precisely, we can define rigorously the prior $P_0^{(\bS)}$ described above by its linear statistics. For any continuous bounded function $g : \bbK^p \to \bbR$, 
one has:
\begin{align}\label{eq:def_P0S}
    \int_{\bbK^p} P_0^{(\bS)}(\mathrm{d}\bz) g(\bz) &\equiv \int_{\bbK^n} \Big\{\prod_{i=1}^n P_0(\mathrm{d} x_i) \Big\} g(\{\mathbbm{1}[k \leq n] S_k x_k\}_{k=1}^p).
\end{align}
Hypothesis~\ref{hyp:prior} implies that we will consider $P_0 = \mathcal{N}_\beta(0,1)$.
In the following of the section, we give the detailed sketch of the proof of Lemma~\ref{lemma:noniid_phi}. 
Some facts and lemmas will be a generalization or a consequence of the works of \cite{barbier2019optimal} and \cite{aubin2018committee}, and we will refer to them when necessary.

\subsection{Interpolating estimation problem}\label{subsec:app_interpolating_model}

Recall that $Q_z \equiv \rho \langle \lambda \rangle_\nu / \alpha = \EE_{\nu_B}[X]/\delta$, and the definition of $\Psi_\mathrm{out}$ in Proposition~\ref{prop:replicas_product_structure}.
We define as well:
\begin{align}
    r_\mathrm{max} &\equiv \sup_{q \in [0,Q_z]} \Psi_\mathrm{out}(q), \\
    \Psi_0^{(\nu)}(r) &\equiv \frac{\beta}{2} \big[r \EE_{\nu_B}[X] - \EE_{\nu_B} \ln (1+r X) \big] , \hspace{1cm} 0 \leq r \leq r_\mathrm{max}.
\end{align}
Since $\nu_B \neq \delta_0$ by hypothesis, we can easily check that $\Psi_{0}^{(\nu)}$ is strictly convex, $\mathcal{C}^2$ and non-decreasing on $[0,r_\mathrm{max}]$ .
By Proposition~18 of \cite{barbier2019optimal}, which directly generalizes to the complex case, we know as well that 
$\Psi_\mathrm{out}$ is convex, $\mathcal{C}^2$, 
and non-decreasing on $[0,Q_z]$, and thus $r_\mathrm{max} = \Psi_\mathrm{out}(Q_z)$.
Let us fix an arbitrary sequence $s_n > 0$ that goes to $0$ as $n$ goes to infinity. 
We fix $\epsilon_2 \in [s_n,2s_n]$, and $\epsilon_1 \in \mathcal{D}_n^\beta$, with 
\begin{align*}
    \mathcal{D}_n^\beta &\equiv \{\lambda \in \mathcal{S}_\beta(\bbR) \, : \, \forall l \in \{1,\beta\}, \lambda_{ll} \in (2 \beta s_n,(2\beta+1)s_n), \, \forall l\neq l' \in \{1,\beta\}, \lambda_{ll'} \in (s_n,2s_n)\}.
\end{align*}
$\mathcal{D}_n^\beta$ is composed of strictly diagonally dominant matrices with positive entries, which implies that $\mathcal{D}_n \subset \mathcal{S}_\beta^+(\bbR)$.
Let $q_\epsilon : [0,1] \to [0,Q_z]$, $r_\epsilon : [0,1] \to [0,r_\mathrm{max}]$ be two continuous ``interpolation'' functions.
For all $\epsilon \in \mathcal{D}_n^\beta \times [s_n,2s_n]$, and all $t \in [0,1]$ we define:
\begin{align}\label{eq:def_R1R2}
   \mathcal{S}_\beta^+(\bbR) \ni R_1(t,\epsilon) &\equiv \epsilon_1 + \Big(\int_0^t r_\epsilon(v) \mathrm{d}v\Big) \mathbbm{1}_\beta , \hspace{1cm} \bbR_+ \ni R_2(t,\epsilon) \equiv \epsilon_2 + \int_0^t q_\epsilon(v) \mathrm{d}v.
\end{align}
We consider the following decoupled observation channels:
\begin{subnumcases}{\label{eq:auxiliary_channels}}
    \label{eq:auxiliary_channel_Pout}
    \Big\{Y_{t,\mu} \sim P_\mathrm{out}\Big(\cdot\Big| \sqrt{\frac{1-t}{p}} [\bW \bZ^\star]_\mu + \sqrt{R_2(t,\epsilon)} V_\mu + \sqrt{Q_z t - R_2(t,\epsilon) + 2 s_n} A_\mu^\star\Big)\Big\}_{\mu=1}^m & \  \\
    \label{eq:auxiliary_channel_prior}
    \tilde{\bY}_t = (R_1(t,\epsilon))^{1/2} \star \bZ^\star + \bzeta, &
\end{subnumcases}
where $V_\mu,A^\star_\mu \overset{\textrm{i.i.d.}}{\sim} \mathcal{N}_\beta(0,1)$, and $\bzeta \sim \mathcal{N}_\beta(0,\mathbbm{1}_p)$.
The prior distribution on $\bZ^\star$ is given by $P_0^{(\bS)}$ in eq.~\eqref{eq:def_P0S}.
We assume that $\{V_\mu\}_{\mu=1}^m$ is known, and the inference problem is to recover both $\bA^\star \in \bbK^m$ and 
$\bZ^\star \in \bbK^p$ from the observations $(\tilde{\bY}_t,\{Y_{t,\mu}\}_{\mu=1}^m)$.
Note that $R_1 \in \mathcal{S}_\beta^+(\bbR)$, so its (matrix) square root is always uniquely defined.
Recall finally the definition of the $\star$ product in Section~\ref{subsec:app_definitions}.
In the following we will study the system of eq.~\eqref{eq:auxiliary_channels}.
In order to state our results fully rigorously, we need to add an hypothesis that can easily be relaxed:
  \begin{enumerate}[label=($h\arabic*^\star$)]
    \item \label{hyp:bounded_prior} The prior $P_0$ has bounded support.
  \end{enumerate}
  Under this hypothesis, $P_0^{(\bS)}$ is still defined by eq.~\eqref{eq:def_P0S}, and we can study the system of eq.~\eqref{eq:auxiliary_channels}.
  Nonetheless, this assumption \emph{a priori} rules out a Gaussian prior for $P_0$, and thus the correspondence between the system of eq.~\eqref{eq:auxiliary_channels} and 
  our original model.
  However, following the arguments of \cite{barbier2019optimal}, hypothesis~\ref{hyp:bounded_prior} can very easily be relaxed to the existence of 
  the second moment of $P_0$, which is then consistent with a Gaussian prior. In the following, we will thus work under hypothesis~\ref{hyp:prior}, 
  but we will sometimes as well use hypothesis~\ref{hyp:bounded_prior} without loss of generality.
We define $u_y(z) \equiv \ln P_\mathrm{out}(y|z)$, and
\begin{align}
    \label{eq:def_Stmu}
    S_{t,\mu} &\equiv \sqrt{\frac{1-t}{n}} [\bW \bZ^\star]_\mu + \sqrt{R_2(t,\epsilon)} V_\mu + \sqrt{Q_z t - R_2(t,\epsilon) + 2 s_n} A^\star_\mu, \\
    \label{eq:def_stmu}
    s_{t,\mu} &\equiv \sqrt{\frac{1-t}{n}} [\bW \bz]_\mu + \sqrt{R_2(t,\epsilon)} V_\mu + \sqrt{Q_z t - R_2(t,\epsilon) + 2 s_n} a_\mu.
\end{align}
The posterior distribution in this model can then be written as:
\begin{align}\label{eq:def_posterior_interpolated}
    \mathbb{P}_{n,t,\epsilon}\Big(\bz,\ba \Big| \bY_t, \tilde{\bY}_t\Big) \mathrm{d}\bz \ \mathrm{d}\ba &\equiv \frac{1}{\mathcal{Z}_{n,t,\epsilon}(\bY_t,\tilde{\bY}_t)} P_0^{(\bS)}(\mathrm{d}\bz) \mathcal{D}_\beta \ba \ e^{-\mathcal{H}_{t,\epsilon}(\bz,\ba; \bY_t,\tilde{\bY}_t, \bW, \bV)}.
\end{align}
To keep the notations lighter we omitted the conditioning on the variables $\bV,\bW$ which are assumed to be known. We defined the Hamiltonian:
\begin{align}\label{eq:def_Hamiltonian}
    \mathcal{H}_{t,\epsilon}(\bz,\ba; \bY_t,\tilde{\bY}_t, \bW, \bV) &\equiv - \sum_{\mu=1}^m u_{Y_{t,\mu}}(s_{t,\mu}) + \frac{\beta}{2} \sum_{k=1}^p \Big|\tilde{Y}_{t,k} - (R_1(t,\epsilon))^{1/2} \star z_k\Big|^2.
\end{align}
For any $t \in (0,1)$, we define the free entropy (the expectation is over all ``quenched'' variables, including $\bS$ if it is random):
\begin{align*}
    f_{n,\epsilon}(t) &\equiv \frac{1}{n} \EE \ln \mathcal{Z}_{n,t,\epsilon}(\bY_t,\tilde{\bY}_t).
\end{align*}
The following lemma gives the $t = 0$ and $t = 1$ limits of the free entropy:
\begin{lemma}\label{lemma:free_entropy_t01}
    $f_{n,\epsilon}(t)$ admits the following limit values for $t \in \{0,1\}$:
    \begin{subnumcases}{\hspace{-1.0cm}}
        \nonumber
        f_{n,\epsilon}(0) = f_n - \frac{\beta \delta}{2} + \smallO_n(1),& \\ 
        \nonumber
        f_{n,\epsilon}(1) = \Psi_0^{(\nu)}\Big(\int_0^1 r_\epsilon(t)\mathrm{d}t\Big) - \frac{\beta}{2} \Big[\delta + \EE_{\nu_B}[X] \int_0^1 r_\epsilon(t) \mathrm{d}t\Big] + \alpha \Psi_\mathrm{out}\Big(\int_0^1 q_\epsilon(t) \mathrm{d}t\Big) + \smallO_n(1). &
    \end{subnumcases}
\end{lemma}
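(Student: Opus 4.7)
The plan is to directly evaluate the partition function~\eqref{eq:def_posterior_interpolated} at the two endpoints of the interpolation. The structural observation is that at $t=0$, the triplet $(\epsilon_1,\epsilon_2,s_n)$ is $\smallO_n(1)$, so after the SVD reduction $\bZ^\star = \bS\bV^\dagger \bX^\star$ recalled above Section~\ref{subsec:app_interpolating_model} the channel~\eqref{eq:auxiliary_channel_Pout} coincides with the original phase-retrieval model up to vanishing perturbations; whereas at $t=1$, the prefactor $\sqrt{(1-t)/p}$ in~\eqref{eq:def_stmu} kills the $\bz$-dependence of $s_{1,\mu}$, so the two channels in~\eqref{eq:auxiliary_channels} factorize completely inside $\mathcal{Z}_{n,1,\epsilon}$.

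For $t=0$ the Gaussian integral over $\ba$ is essentially trivial since $\ba$ enters $s_{0,\mu}$ only through $\sqrt{2s_n}\to 0$: a second-order Taylor expansion of $u_y$, permitted by~\ref{hyp:channel}, yields $\int\mathcal{D}_\beta\ba\,\prod_\mu P_\mathrm{out}(Y_{0,\mu}|s_{0,\mu}) = \prod_\mu P_\mathrm{out}(Y_{0,\mu}|[\bW\bz]_\mu/\sqrt{p})\,(1+\smallO_n(1))$, and the same argument handles the $\sqrt{\epsilon_2}V_\mu$ term. The side-information exponent $-\frac{\beta}{2}\sum_k|\tilde Y_{0,k}-\epsilon_1^{1/2}\star z_k|^2$ becomes $z$-independent in the $\epsilon_1\to 0$ limit, invoking the bounded-prior hypothesis~\ref{hyp:bounded_prior}. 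Pulling $e^{-\frac{\beta}{2}\sum_k |\tilde Y_{0,k}|^2}$ out of the remaining integral recovers the original partition function $\mathcal{Z}_n$; using $\EE|\tilde Y_{0,k}|^2 \to \EE|\zeta_k|^2 = 1$ and $p/n\to\delta$ then gives $f_{n,\epsilon}(0) = f_n - \beta\delta/2 + \smallO_n(1)$.

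For $t=1$, $\mathcal{Z}_{n,1,\epsilon}$ factorizes into a channel part and a side-information part. Integrating out $\ba$ in the former gives $\prod_\mu \mathcal{Z}_\mathrm{out}(Y_{1,\mu};\sqrt{R_2(1,\epsilon)}V_\mu, Q_z-R_2(1,\epsilon)+2s_n)$, which is precisely the marginal density of $Y_{1,\mu}$ given $V_\mu$ (obtained by integrating out $A^\star_\mu$); its log-expectation is therefore exactly $m\Psi_\mathrm{out}(R_2(1,\epsilon))$, converging to $\alpha \Psi_\mathrm{out}\big(\int_0^1 q_\epsilon(v)\,dv\big)$ by continuity of $\Psi_\mathrm{out}$. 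The side-information part is a Gaussian–Gaussian integral since~\ref{hyp:prior} makes $z_k = S_k x_k$ centered Gaussian with variance $S_k^2$ (for $k\le\min(n,p)$, else $0$); completing the square, taking log and expectation (using $\EE|\tilde Y_{1,k}|^2 = 1+RS_k^2$), and writing $R = \int_0^1 r_\epsilon(v)\,dv$ produces $-\frac{\beta}{2}\sum_{k=1}^{\min(n,p)}\ln(1+RS_k^2) - \beta p/2 + \smallO_n(1)$. Dividing by $n$, using~\ref{hyp:convergence_spectrum_B} (whose a.s.\ bound on $\lambda_\mathrm{max}$ supplies the dominated-convergence ingredient) to pass $\frac{1}{n}\sum_k \ln(1+RS_k^2) \to \EE_{\nu_B}\ln(1+RX)$, and rewriting via $\Psi_0^{(\nu)}(R) = \frac{\beta}{2}\big[R\,\EE_{\nu_B}[X]-\EE_{\nu_B}\ln(1+RX)\big]$ delivers the second identity.

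The most delicate technical point concerns the complex case ($\beta=2$): at either endpoint, $R_1(t,\epsilon) = \epsilon_1 + \big(\int_0^t r_\epsilon\big)\mathbbm{1}_\beta$ is not a scalar multiple of $\mathbbm{1}_\beta$ because $\epsilon_1 \in \mathcal{D}_n^\beta$ has off-diagonal entries of order $s_n$, so $R_1^{1/2}\star z$ has to be expanded to first order around $\sqrt{R(t,\epsilon)}\,z$ in order to verify that these off-diagonals contribute only $\smallO_n(1)$ after the Gaussian integration. Everything else—the Lipschitz bounds of~\ref{hyp:channel}, the dominated convergence argument controlling the spectral sum, and the relaxation from~\ref{hyp:bounded_prior} to the Gaussian prior~\ref{hyp:prior} following \cite{barbier2019optimal,aubin2018committee}—is routine.
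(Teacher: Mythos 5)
Your proposal is correct and follows essentially the same route as the paper: direct evaluation of the interpolated partition function at the endpoints, with the channels decoupling at $t=1$ into a $\mathcal{Z}_\mathrm{out}$ term giving $\alpha\Psi_\mathrm{out}$ and a Gaussian--Gaussian side channel whose spectral sum converges to $\EE_{\nu_B}\ln(1+RX)$ under \ref{hyp:convergence_spectrum_B}. The only cosmetic difference is at $t=0$, where the paper disposes of the $\smallO_n(1)$ perturbations $(\epsilon_1,\epsilon_2,s_n)$ by citing a continuity-in-$\epsilon$ bound (Lemma~5.1 of \cite{aubin2018committee}) rather than your self-contained Taylor-expansion argument, which is equivalent.
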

\begin{proof}[Proof of Lemma~\ref{lemma:free_entropy_t01}]
    Using Lemma~5.1 of \cite{aubin2018committee}, there exists a constant $C>0$ such that for all $\epsilon \in \mathcal{D}_n^\beta \times [s_n,2s_n]$, one has $|f_{n,\epsilon}(0) - f_{n,(0,0)}(0)| \leq C s_n$.
    The proof of the value of $f_{n,\epsilon}(0)$ is then straightforwardly done by plugging $t = 0$ into the definition of $f_{n,\epsilon}$.
    At $t = 1$, the interpolation channels of eq.~\eqref{eq:auxiliary_channels} decouple, and we have:
    \begin{align*}
        f_{n,\epsilon}(1) &= \frac{1}{n} \EE \ln \int_{\bbK^p} P_0^{(\bS)}(\mathrm{d}\bz)  \exp \Big\{-\frac{\beta}{2} \sum_{k=1}^p \Big|\tilde{Y}_{1,k} - \Big(\epsilon_1 + \int_0^1 r_\epsilon(t) \mathbbm{1}_\beta \mathrm{d}t\Big)^{1/2} \star z_k\Big|^2 \Big\} \\
        &+ \frac{m}{n} \EE_{Y_1,V} \ln P_\mathrm{out}\Big(Y_{1} \Big | \Big(\epsilon_2 + \int_0^1 q_\epsilon(t)\mathrm{d}t\Big)^{1/2}V + \Big(Q_z + 2 s_n - \epsilon_2 - \int_0^1 q_\epsilon(t)\mathrm{d}t\Big)^{1/2} a \Big), \nonumber \\
        &=\frac{1}{n} \sum_{i=1}^{\min(n,p)} \int_\bbK \mathrm{d}Y \mathcal{D}_\beta X \frac{e^{-\frac{\beta}{2} |Y - S_i (R_1(1,\epsilon))^{1/2}\star X|^2}}{(2\pi/\beta)^{\beta/2}} \ln \Big\{\int \mathcal{D}_\beta x \ e^{-\frac{\beta}{2} |Y - S_i (R_1(1,\epsilon))^{1/2}\star x|^2}\Big\} \nonumber \\ 
         &+ \frac{1}{n} \sum_{i=\min(n,p)+1}^p \int_\bbK \mathrm{d}Y \frac{e^{-\frac{\beta}{2} |Y|^2}}{(2\pi/\beta)^{\beta/2}} \ln \Big\{e^{-\frac{\beta}{2} |Y|^2}\Big\} \Big)+ \alpha \Psi_\mathrm{out}\Big(\int_0^1 q_\epsilon(t) \mathrm{d}t\Big) + \smallO_n(1) .
    \end{align*}
   Recall that $R_1(1,\epsilon) = (\int_0^1 r_\epsilon(t) \mathrm{d}t) \mathbbm{1}_\beta + \smallO_n(1)$, so that up to $\smallO_n(1)$ terms
   the Gaussian integration on $X,x$ can be performed, which yields a Gaussian integration on $Y$, and we reach in the end:
   \begin{align*}
    f_{n,\epsilon}(1) &= -\frac{\beta p}{2n} - \frac{\beta}{2n} \sum_{i=1}^{\min(n,p)} \ln\Big(1+ S_i^2 \int_0^1 r_\epsilon(t) \mathrm{d}t\Big) + \alpha \Psi_\mathrm{out}\Big(\int_0^1 q_\epsilon(t) \mathrm{d}t\Big) + \smallO_n(1).
   \end{align*}
   Recall that $\nu_B$ is defined as the asymptotic eigenvalue distribution of $\bS^\intercal \bS$.
   By \ref{hyp:convergence_spectrum_B} we have:
   \begin{align*}
    f_{n,\epsilon}(1) &= \Psi_0^{(\nu)}\Big(\int_0^1 r_\epsilon(t)\mathrm{d}t\Big) - \frac{\beta}{2} \Big[\delta + \EE_{\nu_B}[X] \int_0^1 r_\epsilon(t) \mathrm{d}t\Big] + \alpha \Psi_\mathrm{out}\Big(\int_0^1 q_\epsilon(t) \mathrm{d}t\Big) + \smallO_n(1).
   \end{align*}
   which is what we wanted to show.
\end{proof}

\subsection{Free entropy variation}

Lemma~\ref{lemma:free_entropy_t01} gives a way to compute the free entropy $f_n$ by the fundamental theorem of analysis:
\begin{align}\label{eq:fundamental_analysis_theorem}
    f_n &= f_{n,\epsilon}(0) + \frac{\beta \delta}{2} + \smallO_n(1) = \frac{\beta \delta}{2} + f_{n,\epsilon}(1) - \int_0^1 f_{n,\epsilon}'(t) \mathrm{d}t.
\end{align}
We define the \emph{overlap} $Q$ and the \emph{overlap matrix} $Q^{(M)}$ as
\begin{subnumcases}{\label{eq:def_Q_QM}}
    Q \equiv \frac{1}{p} (\bZ^\star)^\intercal \bz, \hspace{1cm} Q^{(M)} \equiv Q & if $\beta = 1$, \\
    Q \equiv \frac{1}{p} (\bZ^\star)^\dagger \bz, \hspace{1cm} Q^{(M)} \equiv 
   \frac{1}{p} 
   \begin{pmatrix}
    \mathrm{Re}[\bZ^\star]^\intercal \mathrm{Re}[\bz] & \mathrm{Re}[\bZ^\star]^\intercal \mathrm{Im}[\bz] \\
    \mathrm{Im}[\bZ^\star]^\intercal \mathrm{Re}[\bz] & \mathrm{Im}[\bZ^\star]^\intercal \mathrm{Im}[\bz]
   \end{pmatrix} 
    & if $\beta = 2$.
\end{subnumcases}
Note that $Q \in \bbK$, $Q^{(M)} \in \mathcal{S}_\beta(\bbR)$ for $\beta = 1,2$, and that $\mathrm{Re}[Q] = \mathrm{Tr}_\beta[Q^{(M)}]$.
Finally, the Gibbs bracket $\langle \cdot \rangle_{n,t,\epsilon}$ is defined as the average over the posterior distribution 
of eq.~\eqref{eq:def_posterior_interpolated}.
 Recall that $u_y(z) \equiv \ln P_\mathrm{out}(y|z)$.
We can now state our identity for $f_{n,\epsilon}'(t)$, a counterpart to Proposition~3 of \cite{barbier2019optimal} and Proposition~5.2 of \cite{aubin2018committee}:
\begin{lemma}[Free entropy variation]\label{lemma:free_entropy_variation}
   For all $t \in (0,1)$ and $\epsilon \in \mathcal{D}_n^\beta \times [s_n,2s_n]$: 
   \begin{align*}
    f_{n,\epsilon}'(t) &= -\frac{1}{2\beta} \EE \Big \langle \Big(\frac{1}{n} \sum_{\mu=1}^m u'_{Y_{t,\mu}}(S_{t,\mu})^\dagger u'_{Y_{t,\mu}}(s_{t,\mu}) - \beta^2 \delta r_\epsilon(t) \Big) \cdot \Big(Q-q_\epsilon(t)\Big)\Big\rangle_{n,t,\epsilon} \nonumber \\ 
    &+ \frac{\beta \delta r_\epsilon(t)}{2}(q_\epsilon(t) - Q_z) + \smallO_n(1),
   \end{align*}
   in which $\smallO_n(1)$ is uniform in $t,\epsilon,q_\epsilon,r_\epsilon$.
\end{lemma}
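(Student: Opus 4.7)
The plan is to adapt the Gaussian-interpolation argument used in Proposition~3 of \cite{barbier2019optimal} and Proposition~5.2 of \cite{aubin2018committee} to our product sensing matrix $\bPhi=\bW\bB/\sqrt{p}$ and to possibly complex variables. The $t$-dependence of $f_{n,\epsilon}(t)$ enters through three channels: (i) the prefactor $\sqrt{(1-t)/n}$ appearing in $s_{t,\mu}$ and $S_{t,\mu}$; (ii) the linear functions $R_1(t,\epsilon),R_2(t,\epsilon)$, with derivatives $r_\epsilon(t)\mathbbm{1}_\beta$ and $q_\epsilon(t)$; and (iii) the joint law of $(\bY_t,\tilde{\bY}_t)$. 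Using Bayes-optimality to express the data as explicit functions of the primary Gaussian variables $(\bZ^\star,\bA^\star,\bW,\bV,\bzeta)$ and the channel noise, I would write $f'_{n,\epsilon}(t)=-\tfrac{1}{n}\EE\langle\partial_t\mathcal{H}_{t,\epsilon}\rangle_{n,t,\epsilon}$ with the total $t$-derivative routed through $s_{t,\mu}$, $S_{t,\mu}$ and $R_1(t,\epsilon)$, and then apply Stein's lemma on each Gaussian family followed by the Nishimori identity.

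For the channel part $-\sum_\mu u_{Y_{t,\mu}}(s_{t,\mu})$, the three linear-in-Gaussian pieces of $\partial_t s_{t,\mu}$, in $[\bW\bz]_\mu$, $V_\mu$ and $a_\mu$, are each treated by Gaussian IBP on the corresponding family. The IBP in $W_{\mu i}$ is the decisive one: the derivative hits both $s_{t,\mu}$ inside the Gibbs bracket and $S_{t,\mu}$ inside the data-law factor $u'_{Y_{t,\mu}}(S_{t,\mu})$ coming from $Y_{t,\mu}\sim P_\out(\cdot|S_{t,\mu})$, producing the bilinear $u'_{Y_{t,\mu}}(S_{t,\mu})^\dagger u'_{Y_{t,\mu}}(s_{t,\mu})$ weighted by $(\bZ^\star)^\dagger\bz/n$, that is, by the overlap $Q$. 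The IBPs in $V_\mu$ and $A^\star_\mu$ yield analogous bilinears with deterministic prefactors which, after the Nishimori identity $\EE\langle f(\bz,\bZ^\star)\rangle=\EE\langle f(\bZ^\star,\bz)\rangle$, telescope so that only $u'_{Y_{t,\mu}}(S_{t,\mu})^\dagger u'_{Y_{t,\mu}}(s_{t,\mu})\cdot(Q-q_\epsilon(t))$ survives; the free shift by $q_\epsilon(t)$ is legitimate because its prefactor is Gibbs-deterministic. For the prior side-channel $\tfrac{\beta}{2}\sum_k|\tilde Y_{t,k}-R_1^{1/2}\star z_k|^2$, $\partial_t R_1=r_\epsilon(t)\mathbbm{1}_\beta$ produces $\tfrac{\beta r_\epsilon(t)}{2}$ times a quadratic in $\bz,\bZ^\star$ which, after IBP on $\bzeta$ and Nishimori, collapses to $\tfrac{1}{n}\mathrm{Tr}(\bS^\intercal\bS)-\EE\langle\mathrm{Tr}_\beta Q^{(M)}\rangle$. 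Replacing $\tfrac{1}{n}\mathrm{Tr}(\bS^\intercal\bS)\to\delta Q_z$ via \ref{hyp:convergence_spectrum_B} and splitting $Q=q_\epsilon(t)+(Q-q_\epsilon(t))$ isolates both the constant $\tfrac{\beta\delta r_\epsilon(t)}{2}(q_\epsilon(t)-Q_z)$ and a residual $-\tfrac{\beta^2\delta r_\epsilon(t)}{2}(Q-q_\epsilon(t))$, the latter combining with the channel contribution to form the $(Q-q_\epsilon(t))$ factor of the lemma; the overall prefactor $-\tfrac{1}{2\beta}$ arises from the normalisation of $\mathcal{N}_\beta$ and the $\beta$-dependent form of Stein's lemma.

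The hard part will be uniform control of the $\smallO_n(1)$ remainder in $t,\epsilon,q_\epsilon,r_\epsilon$. Three sources of error enter: the spectral approximation $\tfrac{1}{n}\mathrm{Tr}(\bS^\intercal\bS)\to\delta\EE_{\nu_B}[X]$, which hypothesis \ref{hyp:convergence_spectrum_B} guarantees almost surely together with a uniformly bounded top eigenvalue, so that dominated convergence applies under the Gibbs bracket; the boundedness of $u_y,u'_y,u''_y$ coming from \ref{hyp:channel}, which controls the Stein remainders uniformly in $t$; and the regularising shift $+2s_n$ inside the last square root of $S_{t,\mu}$, which keeps $\partial_t s_{t,\mu}$ integrable up to $t\in\{0,1\}$ uniformly in $\epsilon\in\mathcal{D}_n^\beta\times[s_n,2s_n]$. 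A secondary subtlety, specific to $\beta=2$, is that $Q$ is a priori $\mathbb{C}$-valued while Bayes-optimality forces its imaginary part to average to zero; the matrix $\star$-product together with the constraint $R_1\in\mathcal{S}_\beta^+(\bbR)$ are designed precisely so that the real-trace object $\mathrm{Tr}_\beta Q^{(M)}$ emerges naturally at the end of every IBP, closing the computation and yielding the identity stated in Lemma~\ref{lemma:free_entropy_variation}.
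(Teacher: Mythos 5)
Your high-level sketch of the IBP-plus-Nishimori strategy is correct and mirrors the paper's approach, but there is a genuine gap at the most delicate point.

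When you integrate by parts in $\bW$, the second-order Stein term produces $\|\bZ^\star\|^2/p\cdot\Delta P_\mathrm{out}(Y_{t,\mu}|S_{t,\mu})/P_\mathrm{out}(Y_{t,\mu}|S_{t,\mu})\cdot\ln\mathcal{Z}$, while the IBPs in $V_\mu,A^\star_\mu$ produce the matching term with deterministic prefactor $Q_z$ in place of the random $\|\bZ^\star\|^2/p$. These do not telescope: a residual
$B_n=\frac1n\sum_{\mu}\EE\big[(Q_z-\|\bZ^\star\|^2/p)\,\frac{\Delta P_\mathrm{out}(Y_{t,\mu}|S_{t,\mu})}{P_\mathrm{out}(Y_{t,\mu}|S_{t,\mu})}\,\ln\mathcal{Z}\big]$
survives. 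Your proposal never identifies this term, and the error-control discussion you give (spectral convergence of $\mathrm{Tr}(\bS^\intercal\bS)/n$, boundedness of $u',u''$, the $+2s_n$ regularisation) does not touch it: the difficulty is not domination but that a fluctuation of order $n^{-1/2}$ multiplies $\ln\mathcal{Z}$, which is of order $n$, so a naive bound diverges. Closing the gap requires the additional step the paper uses: observe that the prefactor has zero conditional mean given $(\bZ^\star,\bS_t)$, so one may replace $\ln\mathcal{Z}$ by $\ln\mathcal{Z}-nf_{n,\epsilon}(t)$, then apply Cauchy--Schwarz and invoke the free-entropy concentration $\EE[(\ln\mathcal{Z}/n-f_{n,\epsilon}(t))^2]\le C/n$ of Theorem~\ref{thm:concentration_free_entropy} (which is proved independently of Lemma~\ref{lemma:free_entropy_variation}, so there is no circularity). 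Without this centering-plus-concentration argument, the proof as you sketch it does not close; your claim that the terms ``telescope'' after Nishimori, and that the remaining shift is harmless because its prefactor is Gibbs-deterministic, is precisely where this residual is hiding.

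A secondary imprecision: the $q_\epsilon(t)$ shift is not justified merely by being Gibbs-deterministic. It is justified because, by Nishimori, $\EE\langle u'_{Y_{t,\mu}}(S_{t,\mu})^\dagger u'_{Y_{t,\mu}}(s_{t,\mu})\rangle$ pairs with the $q_\epsilon(t)$ term produced by the $V_\mu$ IBP, so the combination $(Q-q_\epsilon(t))$ is assembled directly from the IBP output, not inserted afterward. The remainder of your structure (treating the prior side-channel via $\partial_t R_1$ and IBP on $\bzeta$, the $\beta=2$ bookkeeping via $Q^{(M)}$ and the $\star$-product) matches the paper and is fine.
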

\begin{proof}[Proof of Lemma~\ref{lemma:free_entropy_variation}]
     The proof is done in two steps. First, we show the following:
     \begin{align}\label{eq:df_dt_first_step}
        f_{n,\epsilon}'(t) &= -\frac{\beta \delta r_\epsilon(t)}{2} (Q_z -q_\epsilon(t)])
        + \frac{1}{2n\beta}\sum_{\mu=1}^m\EE\Big[\Big(Q_z - \frac{\norm{\bZ^\star}^2}{p}\Big) \frac{\Delta P_\mathrm{out}(Y_{t,\mu}|S_{t,\mu})}{P_\mathrm{out}(Y_{t,\mu}|S_{t,\mu})} \ln \mathcal{Z} \Big] \\
       & +  \frac{1}{2\beta}\EE\Big \langle \Big(\frac{1}{n} \sum_{\mu=1}^m u_{Y_{t,\mu}}'(S_{t,\mu})^\dagger u'_{Y_{t,\mu}}(s_{t,\mu}) - \beta^2 \delta r_\epsilon(t)\Big) \cdot \Big(q_\epsilon(t)- Q\Big) \Big \rangle_{n,t,\epsilon}. \nonumber
     \end{align}
     We will then build on this result by using the concentration of the free entropy of the interpolated model, cf.\ Theorem~\ref{thm:concentration_free_entropy} (which is independent of Lemma~\ref{lemma:free_entropy_variation}).
     From the definition of $f_{n,\epsilon}(t)$, we have (denoting $\mathcal{Z} \equiv \mathcal{Z}_{n,t,\epsilon}(\bY_t,\tilde{\bY}_t)$ to lighten the notations):
     \begin{align}\label{eq:df_dt_1}
         f_{n,\epsilon}'(t) &= -\frac{1}{n} \EE [\partial_t \mathcal{H}_{t,\epsilon}(\bZ^\star,\bA^\star;\bY_t,\tilde{\bY}_t,\bW,\bV) \ln \mathcal{Z}] - \frac{1}{n} \EE \langle \partial_t \mathcal{H}_{t,\epsilon}(\bz,\ba;\bY_t,\tilde{\bY}_t,\bW,\bV) \rangle_{n,t,\epsilon}.
     \end{align}
     The definition of $\mathcal{H}$ in eq.~\eqref{eq:def_Hamiltonian} gives, up to $\smallO_n(1)$ terms\footnote{Our conventions for derivatives of real functions of complex variables are reminded in Section~\ref{subsec:app_derivatives}.}:
     \begin{align}\label{eq:dH_dt}
       \partial_t \mathcal{H}_{t,\epsilon}(\bZ^\star,\bA^\star; \bY_t,\tilde{\bY}_t, \bW, \bV) 
       &= - \frac{\beta r_\epsilon(t)}{2 \sqrt{\int_0^t r_\epsilon(u) \mathrm{d}u}} \sum_{k=1}^p Z^\star_k \cdot \zeta_k + \sum_{\mu=1}^m \partial_t S_{t,\mu} \cdot u'_{Y_{t,\mu}}(S_{t,\mu}). 
     \end{align}
     By Proposition~\ref{prop:nishimori} (the Nishimori identity), we have:
     \begin{align*}
    \EE \langle \partial_t \mathcal{H}_{t,\epsilon}(\bz,\ba;\bY_t,\tilde{\bY}_t,\bW,\bV) \rangle_{n,t,\epsilon}
    &= \EE [\partial_t \mathcal{H}_{t,\epsilon}(\bZ^\star,\bA^\star;\bY_t,\tilde{\bY}_t,\bW,\bV) ]  \nonumber, \\
    &= \EE \Big[\sum_{\mu=1}^m \partial_t S_{t,\mu} \cdot \frac{P_\mathrm{out}'(Y_{t,\mu}|S_{t,\mu})}{P_\mathrm{out}(Y_{t,\mu}|S_{t,\mu})} \Big] + \smallO_n(1) = \smallO_n(1),
     \end{align*}
     as can be seen from eq.~\eqref{eq:dH_dt}. The first term of eq.~\eqref{eq:df_dt_1} can be written (up to $\smallO_n(1)$ terms) as 
     the sum of four contributions that we will compute successively, using Stein's lemma (see eqs.~\eqref{eq:stein_lemma_order_1},\eqref{eq:stein_lemma_order_2}).
     We start with the first one:
     \begin{align}
        \nonumber
        \frac{\beta r_{\epsilon(t)}}{2 n \sqrt{\int_0^tr_\epsilon(u)\mathrm{d}u}} \sum_{k=1}^p \EE[Z_k^\star \cdot \zeta_k \ln \mathcal{Z}]
        &= \frac{r_\epsilon(t)}{2n \sqrt{\int_0^tr_\epsilon(u)\mathrm{d}u}} \sum_{k=1}^p \EE\Big[Z_k^\star \cdot \frac{\mathrm{d}}{\mathrm{d}\zeta_k}\ln \mathcal{Z}\Big], \\
        \nonumber
         &\hspace{-2cm}=\frac{-\beta r_\epsilon(t)}{2n \sqrt{\int_0^tr_\epsilon(u)\mathrm{d}u}} \sum_{k=1}^p \EE [Z_k^\star \cdot \langle R_1(t,\epsilon)^{1/2} \star (Z^\star_k - z_k) + \zeta_k\rangle_{n,t,\epsilon}], \\
         \nonumber
         &= \frac{-\beta r_\epsilon(t)}{2n} \sum_{k=1}^p \EE [|Z_k^\star|^2 - Z_k^\star \cdot \langle z_k \rangle_{n,t,\epsilon}] + \smallO_n(1) \\ 
         \label{eq:first_term_df_dt}
         &= \frac{-\beta \delta r_\epsilon(t)}{2}(Q_z - \EE[\langle Q\rangle_{n,t,\epsilon}]) + \smallO_n(1).
     \end{align}
     We used the Nishimori identity Proposition~\ref{prop:nishimori} in the last equation.
     We now turn to the second term, and in a similar way we reach, by integration by parts with respect to $\bW$ (recall the definition of the Laplace operator in eq.~\eqref{eq:def_laplace}):
     \begin{align*}
        &\frac{1}{\sqrt{p(1-t)}} \sum_{\mu=1}^m \EE\Big[[\bW \bZ^\star]_\mu \cdot u'_{Y_{t,\mu}}(S_{t,\mu})\ln \mathcal{Z}\Big] \nonumber \\ 
        &= \frac{1}{\beta}\sum_{\mu=1}^m\EE\Big[\frac{\norm{\bZ^\star}^2}{p}  (\Delta u_{Y_{t,\mu}}(S_{t,\mu}) + |u'_{Y_{t,\mu}}(S_{t,\mu})|^2) \ln \mathcal{Z}
        \\
        &\hspace{3cm}+\Big \langle \Big[(u_{Y_{t,\mu}}'(S_{t,\mu}))^\dagger u'_{Y_{t,\mu}}(s_{t,\mu})\Big] \cdot \Big[\frac{(\bZ^\star)^\dagger \bz}{p}\Big] \Big \rangle_{n,t,\epsilon}\Big],\nonumber \\
        &= \frac{1}{\beta}\sum_{\mu=1}^m\EE\Big[\frac{\norm{\bZ^\star}^2}{p} \frac{\Delta P_\mathrm{out}(Y_{t,\mu}|S_{t,\mu})}{P_\mathrm{out}(Y_{t,\mu}|S_{t,\mu})} \ln \mathcal{Z}
        +  \Big \langle \big[(u_{Y_{t,\mu}}'(S_{t,\mu}))^\dagger u'_{Y_{t,\mu}}(s_{t,\mu})\big] \cdot \Big[\frac{(\bZ^\star)^\dagger \bz}{p}\Big] \Big \rangle_{n,t,\epsilon}\Big].
     \end{align*}
     We used in the last equation that $\Delta u_y(x) + |u'_y(x)|^2 = \Delta P_\mathrm{out}(y|x) / P_\mathrm{out}(y|x)$. 
     Integrating by parts with respect to $V_\mu,A_\mu^\star$, we obtain in a similar way:
     \begin{align*}
        &\EE \sum_{\mu=1}^m \Big[\frac{q_\epsilon(t) V_\mu}{\sqrt{R_2(t,\epsilon)}} + \frac{(Q_z - q_\epsilon(t)) A_\mu^\star}{\sqrt{Q_z t - R_2(t,\epsilon) + 2 s_n}}\Big] \cdot u'_{Y_{t,\mu}}(S_{t,\mu})\ln \mathcal{Z} \nonumber \\ 
        &= \frac{1}{\beta}\sum_{\mu=1}^m\EE\Big[Q_z \frac{\Delta P_\mathrm{out}(Y_{t,\mu}|S_{t,\mu})}{P_\mathrm{out}(Y_{t,\mu}|S_{t,\mu})} \ln \mathcal{Z} 
        + q_\epsilon(t) \langle u'_{Y_{t,\mu}}(S_{t,\mu}) \cdot u'_{Y_{t,\mu}}(s_{t,\mu}) \rangle_{n,t,\epsilon} \Big].
     \end{align*}
     By using the Nishimori identity, we obtain after summing all the previous terms the sought eq.~\eqref{eq:df_dt_first_step}:
     \begin{align*}
        f_{n,\epsilon}'(t) &= -\frac{\beta \delta r_\epsilon(t)}{2} (Q_z -q_\epsilon(t))
        + \frac{1}{2n\beta}\sum_{\mu=1}^m\EE\Big[\Big(Q_z - \frac{\norm{\bZ^\star}^2}{p}\Big) \frac{\Delta P_\mathrm{out}(Y_{t,\mu}|S_{t,\mu})}{P_\mathrm{out}(Y_{t,\mu}|S_{t,\mu})} \ln \mathcal{Z} \Big] \\
       & +  \frac{1}{2\beta}\EE\Big \langle \Big(\frac{1}{n} \sum_{\mu=1}^m u_{Y_{t,\mu}}'(S_{t,\mu})^\dagger u'_{Y_{t,\mu}}(s_{t,\mu}) - \beta^2 \delta r_\epsilon(t)\Big) \cdot (q_\epsilon(t)- Q) \Big \rangle_{n,t,\epsilon}. \nonumber
     \end{align*}
     To finish the proof, we must therefore just show that $\lim_{n \to \infty} B_n = 0$ 
     uniformly in $t,\epsilon,q_\epsilon,r_\epsilon$, with
     \begin{align*}
        B_n &\equiv \frac{1}{n}\sum_{\mu=1}^m\EE\Big[\Big(Q_z - \frac{\norm{\bZ^\star}^2}{p}\Big) \frac{\Delta P_\mathrm{out}(Y_{t,\mu}|S_{t,\mu})}{P_\mathrm{out}(Y_{t,\mu}|S_{t,\mu})} \ln \mathcal{Z} \Big].
     \end{align*}
     First, note that 
     \begin{align*}
        \EE\Big[\Big(Q_z - \frac{\norm{\bZ^\star}^2}{p}\Big) \frac{\Delta P_\mathrm{out}(Y_{t,\mu}|S_{t,\mu})}{P_\mathrm{out}(Y_{t,\mu}|S_{t,\mu})} \Big] &= \EE\Big[\Big(Q_z - \frac{\norm{\bZ^\star}^2}{p}\Big) \EE \Big[\frac{\Delta P_\mathrm{out}(Y_{t,\mu}|S_{t,\mu})}{P_\mathrm{out}(Y_{t,\mu}|S_{t,\mu})} \Big| \bZ^\star, \bS_t\Big]\Big] = 0, 
     \end{align*}
     since $\int \mathrm{d}Y \nabla P_\mathrm{out}(Y|S) = 0$.
     Using this, we can write 
     \begin{align}\label{eq:Bn_after_removing_fn}
        B_n &= \frac{1}{n}\sum_{\mu=1}^m\EE\Big[\Big(Q_z - \frac{\norm{\bZ^\star}^2}{p}\Big) \frac{\Delta P_\mathrm{out}(Y_{t,\mu}|S_{t,\mu})}{P_\mathrm{out}(Y_{t,\mu}|S_{t,\mu})} (\ln \mathcal{Z} - f_{n,\epsilon}(t))\Big].
     \end{align}
     We then follow exactly the lines of Appendix~A.5.2 of \cite{barbier2019optimal}, let us recall its main steps.
     Starting from eq.~\eqref{eq:Bn_after_removing_fn}, one uses the Cauchy-Schwarz inequality alongside Theorem~\ref{thm:concentration_free_entropy} (which is independent of Lemma~\ref{lemma:free_entropy_variation}), that gives
     $\EE[(\ln \mathcal{Z}/n - f_{n,\epsilon}(t))^2] \to 0$ uniformly in $t$.
     The expectation of the square of the other terms in eq.~\eqref{eq:Bn_after_removing_fn} can easily be bounded using hypotheses~\ref{hyp:channel},\ref{hyp:bounded_prior},\ref{hyp:convergence_spectrum_B}, uniformly in $t$. 
     Combining these bounds then shows that $B_n \to 0$ uniformly in $t$, which finishes the proof.
\end{proof}

\subsection{Concentration of the free entropy and the overlap}\label{subsec:app_concentrations}
We denote the mean over $\epsilon$ as:
\begin{align*}
    \EE_\epsilon[\cdot] &\equiv \frac{1}{s_n \mathrm{Vol}(\mathcal{D}_n^\beta)} \int_{\mathcal{D}_n^\beta} \mathrm{d}\epsilon_1 \int_0^1 \mathrm{d}\epsilon_2 [\cdot].
\end{align*}
In \cite{barbier2019optimal,aubin2018committee,barbier2019overlap}, the authors give a quite technical proof of the concentration of the free entropy and the overlap of an interpolated system 
close to the one described in Section~\ref{subsec:app_interpolating_model}. 
We present here two results of this type.
The first one concerns the concentration of the free entropy of the interpolated system\footnote{Recall the definition of $\mathcal{Z}_{n,t,\epsilon}$ in eq.~\eqref{eq:def_posterior_interpolated}.}. 
It is very similar to Theorem~6 of \cite{barbier2019optimal}.
\begin{theorem}[Free entropy concentration]\label{thm:concentration_free_entropy}
    Under the assumptions of Theorem~\ref{thm:main_product_gaussian_rotinv}, there exists a constant $C > 0$ that does not depend on $n,t,\epsilon$ and such that
    for all $n,t,\epsilon,q_\epsilon,r_\epsilon$:
    \begin{align*}
        \EE\Big[\Big(\frac{1}{n} \ln \mathcal{Z}_{n,t,\epsilon}(\bY_t,\tilde{\bY_t}) - \frac{1}{n} \EE \ln \mathcal{Z}_{n,t,\epsilon}(\bY_t,\tilde{\bY_t})\Big)^2\Big] &\leq \frac{C}{n}.
    \end{align*}
\end{theorem}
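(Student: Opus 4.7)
The plan is to establish $O(1/n)$ concentration of $F_n \equiv \frac{1}{n}\ln\mathcal{Z}_{n,t,\epsilon}(\bY_t,\tilde{\bY}_t)$ by decomposing its total variance into contributions from the independent sources of randomness entering the interpolated model: the Gaussian matrix $\bW$, the Gaussian noises $\bV,\bA^\star,\bzeta$, the signal $\bZ^\star$ (which under \ref{hyp:prior} is Gaussian up to multiplication by $\bS$), the channel outputs $\{Y_{t,\mu}\}$, and (if random) the matrix $\bB$. For each piece I would apply the appropriate concentration tool: the Gaussian Poincar\'e inequality for Gaussian blocks and an Efron--Stein / bounded-differences argument for the channel outputs. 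This is the standard scheme used to prove Theorem~6 of \cite{barbier2019optimal} and Theorem~4.6 of \cite{aubin2018committee}; the only adaptation needed here is to accommodate the extra matrix $\bS$ and the complex case. Uniformity in $(t,\epsilon,q_\epsilon,r_\epsilon)$ comes for free because every bound will be phrased in terms of the global constants provided by \ref{hyp:channel} and the uniform spectral bound $\lambda_\mathrm{max}$ from \ref{hyp:convergence_spectrum_B}.

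For the Gaussian blocks I would compute $\partial \ln\mathcal{Z}/\partial X = -\langle \partial_X \mathcal{H}\rangle_{n,t,\epsilon}$ using the explicit Hamiltonian~\eqref{eq:def_Hamiltonian}. Each such partial derivative reduces to a Gibbs average of products of the bounded quantities $u'_{Y_{t,\mu}}$, $\partial_z\varphi_\mathrm{out}$, and components of $\bz, \bS$ (bounded by \ref{hyp:bounded_prior} and \ref{hyp:convergence_spectrum_B}). Scaling is crucial: since $s_{t,\mu} = \sqrt{(1-t)/p}\,[\bW\bz]_\mu + \cdots$, one finds $|\partial_{W_{\mu i}}\ln\mathcal{Z}|\le C/\sqrt{n}$, so $\|\nabla_\bW F_n\|^2 \le n^{-2}\sum_{\mu,i}C^2/n = O(1/n)$ over the $mp=\Theta(n^2)$ entries of $\bW$. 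Analogous Lipschitz estimates for $\bV,\bA^\star,\bzeta$ each produce an $O(1/n)$ contribution (they are vectors of size $\Theta(n)$ with per-coordinate gradient $O(1/n)$), and the signal $\bZ^\star$ contributes similarly since $P_0^{(\bS)}$ is a Gaussian rescaled by $\bS$ whose entries are uniformly bounded. Gaussian Poincar\'e then yields a variance bound of the required order.

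The channel outputs $\{Y_{t,\mu}\}_{\mu=1}^m$ are not Gaussian in general, so I would handle them via the Efron--Stein inequality: replacing one $Y_{t,\mu}$ by an independent copy changes $\ln \mathcal{Z}$ by at most $\mathrm{osc}(\ln P_\mathrm{out})$, which is controlled by the boundedness hypothesis on $\varphi_\mathrm{out}$ in \ref{hyp:channel} (as in Lemma~C.2 of \cite{barbier2019optimal}). Summed over $\mu=\Theta(n)$ and divided by $n^2$, this gives another $O(1/n)$ contribution. If the matrix $\bB$ is random one conditions on it first, applies the previous arguments to obtain the conditional variance bound $C/n$ uniformly (the constants depend only on $\lambda_\mathrm{max}$), and then adds the (also $O(1/n)$) variance of the conditional mean using the Gaussian concentration of $F_n$ as a function of $\bB$ through its spectral statistics. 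Total variance gives the result.

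The main obstacle to watch for is twofold. First, the Hamiltonian contains the quadratic term $\tfrac{\beta}{2}|\tilde Y_{t,k}-(R_1(t,\epsilon))^{1/2}\star z_k|^2$, which is \emph{not} globally Lipschitz in $\bZ^\star$; here one leverages \ref{hyp:bounded_prior} (taken without loss of generality) so that on the support of the prior the relevant quantities are uniformly bounded, and the relaxation to general Gaussian priors is obtained \emph{a posteriori} by truncation as in \cite{barbier2019optimal}. Second, one must ensure that the constants are genuinely uniform in $t\in(0,1)$ and $\epsilon\in\mathcal{D}_n^\beta\times[s_n,2s_n]$: this is the reason for writing every bound in terms of $\lambda_\mathrm{max}$, the channel bound, and $Q_z$, all of which are independent of $(t,\epsilon,q_\epsilon,r_\epsilon)$.
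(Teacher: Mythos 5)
Your proposal follows essentially the same route as the paper, which simply adapts Section~E.1 of \cite{barbier2019optimal}: decompose the variance over the independent sources of randomness, apply the Gaussian Poincar\'e inequality to $\bzeta,\bW,\bA^\star,\bV$, and control the remaining (non-Gaussian) variables by a bounded-differences/Efron--Stein argument with constants coming from \ref{hyp:channel}, \ref{hyp:bounded_prior} and the spectral bound in \ref{hyp:convergence_spectrum_B}. The only cosmetic differences are that the paper treats $\bZ^\star$ by bounded differences rather than Poincar\'e (consistent with the bounded-support truncation you also invoke), and that the arbitrary matrix $\bB$ is best left conditioned on throughout rather than handled by ``Gaussian concentration in $\bB$'', which is not available under \ref{hyp:phi_product}.
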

\noindent
Our second theorem concerns the concentration of the overlap. It will follow as an almost immediate consequence of a result of \cite{barbier2019overlap}. 
Before stating it, we introduce a regularity notion for our interpolation functions of eq.~\eqref{eq:def_R1R2}:
\begin{definition}[Regularity]\label{def:regularity}
    The families of functions $(q_\epsilon),(r_\epsilon)$ for $\epsilon \in \mathcal{D}_n^\beta \times [s_n, 2 s_n]$ 
    are said to be \emph{regular} if there exists $\gamma > 0$ such that for all $t \in [0,1]$ the mapping $\epsilon \mapsto R(t,\epsilon) \equiv (R_1(t,\epsilon),R_2(t,\epsilon))$ is a 
    $\mathcal{C}^1$ diffeomorphism whose Jacobian $J_{n,\epsilon}(t)$ satisfies $J_{n,\epsilon}(t) \geq \gamma$ for all $t\in [0,1]$ and all $\epsilon$.
\end{definition}
\noindent
We can now state our theorem on the concentration of the overlap $Q$:
\begin{theorem}[Overlap concentration] \label{thm:concentration_overlap}
    Under \ref{hyp:channel},\ref{hyp:bounded_prior},\ref{hyp:phi_product},\ref{hyp:convergence_spectrum_B}, and if the functions 
    $(q_\epsilon,r_\epsilon)$ are regular (cf.\ Definition~\ref{def:regularity}), then there exists a sequence $s_n$ going to $0$ (arbitrarily slowly) such that
    \begin{align*}
        \EE_\epsilon \int_0^1 \mathrm{d}t \  \EE \langle |Q-\EE\langle Q\rangle_{n,t,\epsilon}|^2 \rangle_{n,t,\epsilon} &= \smallO_n(1),
    \end{align*}
    with $\smallO_n(1)$ uniform in the choice of $r_\epsilon,q_\epsilon$.
\end{theorem}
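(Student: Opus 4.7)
The plan is to follow the Barbier--Macris overlap concentration technique of \cite{barbier2019overlap}, as adapted to interpolating GLM-type models in \cite{barbier2019optimal,aubin2018committee}, with two adaptations specific to our setting. First, the perturbation $\epsilon_1$ must live in the matrix cone $\mathcal{D}_n^\beta$ of positive, strictly diagonally dominant symmetric/Hermitian matrices rather than on the real line, because it enters the auxiliary channel~\eqref{eq:auxiliary_channel_prior} through $(R_1(t,\epsilon))^{1/2}\star \bZ^\star$; this cone is chosen so that $R_1(t,\epsilon)\in \mathcal{S}_\beta^+(\bbR)$ and so that $\epsilon_1$ varies in enough independent directions to probe all entries of the matrix overlap $Q^{(M)}$ of~\eqref{eq:def_Q_QM}. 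Second, in the complex case $\beta=2$ one must control simultaneously the four real components of $Q^{(M)}$, not just the scalar $Q\in\bbK$.

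Once this framework is in place, I would use the standard decomposition
\[
\EE\langle |Q-\EE\langle Q\rangle_{n,t,\epsilon}|^2\rangle_{n,t,\epsilon} \leq 2\,\EE\langle |Q-\langle Q\rangle_{n,t,\epsilon}|^2\rangle_{n,t,\epsilon} + 2\,\EE\,\big|\langle Q\rangle_{n,t,\epsilon}-\EE\langle Q\rangle_{n,t,\epsilon}\big|^2
\]
into a thermal and a disorder piece. The thermal piece is handled by noting that the $R_1$-channel in~\eqref{eq:auxiliary_channel_prior} is a Gaussian side-information channel on $\bZ^\star$, so that differentiating $f_{n,\epsilon}(t)$ once (resp.\ twice) in $\epsilon_1$ yields, via the Nishimori identity, the posterior mean (resp.\ variance) of entries of $Q^{(M)}$. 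Averaging over $\epsilon_1\in \mathcal{D}_n^\beta$ telescopes these second derivatives into boundary terms controlled by the a priori bound $\sup_\epsilon |f_{n,\epsilon}(t)|=O(1)$ (which follows from \ref{hyp:channel},\ref{hyp:bounded_prior},\ref{hyp:convergence_spectrum_B}), giving $\EE_\epsilon \EE \langle |Q-\langle Q\rangle|^2\rangle = o_n(1)$ provided the volume $\mathrm{Vol}(\mathcal{D}_n^\beta)=\Theta(s_n^{\beta(\beta+1)/2})$ shrinks slowly enough. The disorder piece is bounded by exploiting the convexity of $\epsilon_1\mapsto f_{n,\epsilon}(t)$ on $\mathcal{D}_n^\beta$ (a standard property of Gaussian-channel free entropies in the noise precision); the convexity-to-fluctuations lemma of \cite{barbier2019overlap} then bounds $|\langle Q\rangle_{n,t,\epsilon}-\EE\langle Q\rangle_{n,t,\epsilon}|^2$ by finite differences of $(\ln\mathcal{Z}_{n,t,\epsilon}-\EE\ln\mathcal{Z}_{n,t,\epsilon})/n$ at nearby $\epsilon$'s, whose $L^2$-norm is $O(1/n)$ by Theorem~\ref{thm:concentration_free_entropy}. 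The regularity assumption (Jacobian $J_{n,\epsilon}(t)\geq \gamma>0$) then transfers the $\EE_\epsilon$-average into an averaging in the native variables $(R_1,R_2)$ uniformly in $(q_\epsilon,r_\epsilon)$.

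The main obstacle is the careful matrix-valued treatment of $\epsilon_1$: the second differential of $f_{n,\epsilon}(t)$ in $\epsilon_1$ is a tensor of dimension $\beta(\beta+1)/2$ rather than a scalar, and one must extract from it precisely the combinations corresponding to the entries of $Q^{(M)}$ --- this is why $\mathcal{D}_n^\beta$ was engineered to offer independent perturbation directions along every diagonal and off-diagonal entry. The other delicate point is the joint calibration of $s_n$: telescoping the second derivatives loses an inverse-volume factor $\Theta(s_n^{-\beta(\beta+1)/2})$, while the concentration of Theorem~\ref{thm:concentration_free_entropy} is uniform in $\epsilon$ and imposes no lower bound on $s_n$; one therefore first sends $n\to\infty$ at fixed $s_n$ and only then $s_n\downarrow 0$, which is exactly the ``arbitrarily slowly'' phrasing of the statement.
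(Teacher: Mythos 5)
Your proposal is correct and follows essentially the same route as the paper: the paper's proof of Theorem~\ref{thm:concentration_overlap} simply invokes the overlap-concentration machinery of \cite{barbier2019overlap} as a black box, whose prerequisites are exactly the ones you identify --- the free-entropy concentration of Theorem~\ref{thm:concentration_free_entropy} and the regularity of $(q_\epsilon,r_\epsilon)$ allowing the $\epsilon$-average to be transferred to the native variables $(R_1,R_2)$ --- together with a short Lipschitz/Nishimori argument (Lemma~\ref{lemma:average_value_complex}) handling the complex case. What you sketch (thermal/disorder decomposition, telescoping of second $\epsilon_1$-derivatives, convexity plus finite differences against $(\ln\mathcal{Z}-\EE\ln\mathcal{Z})/n$, and the slow calibration of $s_n$) is precisely the internal mechanism of that cited result, so the two arguments coincide in substance.
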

\noindent
The rest of this section is dedicated to the proofs of Theorem~\ref{thm:concentration_free_entropy} and Theorem~\ref{thm:concentration_overlap}.

\subsubsection{Proof of Theorem~\ref{thm:concentration_free_entropy}}

The proof described in Section~E.1 of \cite{barbier2019optimal} can be adapted verbatim in this setting.
It relies on two concentration inequalities \cite{boucheron2013concentration}, that we recall here in the complex and real settings.
\begin{proposition}[Gaussian Poincar\'e inequality]\label{prop:gaussian_poincare}
    Let $\bU \in \bbK^n$ be distributed according to $\mathcal{N}_\beta(0,\mathbbm{1}_n)$, and $g : \bbK^n \to \bbR$ a $\mathcal{C}^1$ function. Recall our conventions for derivatives, see Section~\ref{subsec:app_derivatives}.
    Then 
    \begin{align*}
        \EE[g(\bU)^2] - \EE[g(\bU)]^2 &\leq \frac{1}{\beta} \EE[\norm{\nabla g(\bU)}^2].
    \end{align*}
\end{proposition}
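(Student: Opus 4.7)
The plan is to establish the inequality first in the real case $\beta=1$ and then to reduce the complex case $\beta=2$ to the real case by the identification $\bbC^n \cong \bbR^{2n}$.

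For $\beta = 1$, the statement is the classical Gaussian Poincar\'e inequality for $\mathcal{N}(0,\mathbbm{1}_n)$, and its sharp form admits a short proof via the Hermite polynomial basis. One expands $g = \sum_\alpha c_\alpha H_\alpha$ in the orthonormal system of multivariate Hermite polynomials indexed by $\alpha \in \bbN^n$, noting $c_{\bzero} = \EE[g(\bU)]$ and that $\partial_i H_\alpha = \sqrt{\alpha_i}\, H_{\alpha - e_i}$. By Parseval this yields
\[
\mathrm{Var}(g) = \sum_{\alpha \neq \bzero} c_\alpha^2, \qquad \EE[\|\nabla g\|^2] = \sum_\alpha |\alpha|\, c_\alpha^2,
\]
so $\mathrm{Var}(g) \leq \EE[\|\nabla g\|^2]$ since $|\alpha| \geq 1$ on the non-constant modes. (Equivalently, one may argue via the Ornstein--Uhlenbeck semigroup using the commutation $\nabla P_t g = e^{-t} P_t \nabla g$ together with $\mathrm{Var}(g) = 2 \int_0^\infty \EE[\|\nabla P_t g\|^2]\, dt$; both routes produce the sharp constant.)

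For $\beta = 2$, decompose each component $U_k = X_k + i Y_k$ with $X_k, Y_k \in \bbR$. A direct check on the density $\mathcal{D}_2 z = \pi^{-1} e^{-|z|^2} \d z$ shows that $(X_k, Y_k)_{k=1}^n$ is distributed as $\mathcal{N}(0, \tfrac{1}{2} \mathbbm{1}_{2n})$ on $\bbR^{2n}$. Applying the rescaled real Poincar\'e inequality (if $\bX \sim \mathcal{N}(0, \sigma^2 \mathbbm{1})$ then $\mathrm{Var}(h(\bX)) \leq \sigma^2\, \EE[\|\nabla_{\bbR} h(\bX)\|^2]$, a trivial rescaling of the $\beta = 1$ result) gives $\mathrm{Var}(g(\bU)) \leq \tfrac{1}{2}\, \EE[\|\nabla_{\bbR} g(\bU)\|^2]$, where $\nabla_\bbR$ denotes the real gradient in the $2n$ underlying variables. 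To conclude, one invokes the convention of Section~\ref{subsec:app_derivatives}: under that convention the complex gradient $\nabla g$ of a real-valued $g$ satisfies $\|\nabla g\|^2 = \|\nabla_\bbR g\|^2$, so the bound becomes $\mathrm{Var}(g) \leq \tfrac{1}{2}\, \EE[\|\nabla g\|^2] = \tfrac{1}{\beta}\, \EE[\|\nabla g\|^2]$, as claimed.

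The main difficulty is not analytic but notational: one must carefully reconcile the paper's complex-derivative convention with the real-variable computation so that the factor $1/\beta$ emerges uniformly for $\beta \in \{1,2\}$. A standard approximation argument (smooth compactly supported functions are dense in the appropriate weighted Sobolev space) allows one to extend the inequality from nice test functions to the $\mathcal{C}^1$ class appearing in the statement, provided $\EE[\|\nabla g(\bU)\|^2] < \infty$ (otherwise the bound is vacuous).
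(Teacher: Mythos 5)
The paper does not actually prove this proposition: it is stated verbatim (alongside Proposition~\ref{prop:bounded_differences}) as a classical concentration inequality recalled from the reference \cite{boucheron2013concentration}, and the authors simply verify that the real argument transposes to the complex convention without spelling out the details. Your proof is therefore a self-contained derivation of a fact the paper takes as given, and it is correct. The Hermite-expansion computation for $\beta=1$ is the standard sharp-constant proof, and the $\beta=2$ reduction is handled properly: under the paper's normalization $\mathcal{D}_2 z = \pi^{-1}e^{-|z|^2}\,\mathrm{d}z$, the real and imaginary parts are indeed i.i.d.\ $\mathcal{N}(0,\tfrac12)$, and the paper's convention $f'(z)=\partial_x f + i\partial_y f$ makes $\lvert f'(z)\rvert^2 = (\partial_x f)^2+(\partial_y f)^2 = \lVert\nabla_\bbR f\rVert^2$, so the rescaled real Poincar\'e inequality with $\sigma^2=1/2$ yields exactly the claimed $1/\beta$ prefactor. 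The only stylistic difference is that the paper offloads the proof and the $\mathcal{C}^1$-extension technicalities to the textbook citation, whereas you supply them explicitly (including the density argument for passing from test functions to the $\mathcal{C}^1$ class) — a reasonable choice if one wants the supplementary material to be self-contained, at the cost of rederiving a well-known inequality.
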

\begin{proposition}[Bounded differences inequality]\label{prop:bounded_differences}
    Let $\mathcal{B} \subset \bbK$, and $g : \mathcal{B}^n \to \bbR$ a function such that there exists $c_1,\cdots,c_n \geq 0$ that satisfy for all $i \in \{1,\cdots,n\}$:
    \begin{align*}
        \sup_{\substack{u_1,\cdots,u_n \in \mathcal{B}^n \\ u'_i \in \mathcal{B}}} |g(u_1,\cdots,u_i,\cdots,u_n) - g(u_1,\cdots,u_{i-1},u'_i,u_{i+1},\cdots,u_n)| \leq c_i.
    \end{align*}
    Then if $\bU \in \bbK^n$ is a random vector of independent random variables with value in $\mathcal{B}$, we have:
    \begin{align*}
        \EE[g(\bU)^2] - \EE[g(\bU)]^2 &\leq \frac{\beta}{4} \sum_{i=1}^n c_i^2.
    \end{align*}
\end{proposition}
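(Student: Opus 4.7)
The plan is to prove this McDiarmid-type variance inequality via the Doob martingale decomposition combined with Popoviciu's inequality on the variance of a bounded real random variable, treating the real case $\beta=1$ directly and reducing the complex case $\beta=2$ to it by splitting real and imaginary parts.

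Setting $\mathcal{F}_i \equiv \sigma(U_1,\ldots,U_i)$ and $M_i \equiv \EE[g(\bU) \mid \mathcal{F}_i]$, one has $M_0 = \EE g(\bU)$, $M_n = g(\bU)$, and orthogonality of martingale differences gives
\[
\EE[g(\bU)^2]-\EE[g(\bU)]^2 \;=\; \sum_{i=1}^n \EE\bigl[(M_i-M_{i-1})^2\bigr].
\]
By independence of the $U_j$, I would write $M_i - M_{i-1} = h_i(U_i) - \EE[h_i(U_i)\mid \mathcal{F}_{i-1}]$, where $h_i(u) \equiv \EE[g(U_1,\ldots,U_{i-1},u,U_{i+1},\ldots,U_n)\mid \mathcal{F}_{i-1}]$. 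The bounded-differences hypothesis plus Jensen's inequality ensure $|h_i(u)-h_i(u')| \leq c_i$ for all $u,u' \in \mathcal{B}$, so $h_i(U_i)$ conditioned on $\mathcal{F}_{i-1}$ is a real-valued random variable with support of diameter at most $c_i$. Popoviciu's inequality then yields $\EE[(M_i-M_{i-1})^2 \mid \mathcal{F}_{i-1}] \leq c_i^2/4$, and summing gives the bound $\tfrac{1}{4}\sum_i c_i^2$, matching $\beta/4 = 1/4$ in the real case.

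For the complex case $\beta=2$, the same Doob martingale works, but I would refine the filtration to reveal $\mathrm{Re}(U_i)$ before $\mathrm{Im}(U_i)$, producing a $2n$-step martingale. Since changing $\mathrm{Re}(U_i)$ alone (resp.\ $\mathrm{Im}(U_i)$ alone) corresponds to changing $U_i$ to some admissible value in $\mathcal{B}$, the hypothesis still bounds the oscillation of the corresponding conditional expectation by $c_i$ at both stages. Popoviciu applied at each of the $2n$ stages then gives
\[
\EE[g(\bU)^2]-\EE[g(\bU)]^2 \;\leq\; \frac{1}{4}\cdot 2\sum_i c_i^2 \;=\; \frac{\beta}{4}\sum_i c_i^2.
\]

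The only real care point, which I would flag as the minor obstacle, is that $\mathrm{Re}(U_i)$ and $\mathrm{Im}(U_i)$ need not be independent; this is handled by noting that the martingale decomposition is valid for any refinement of the filtration, and that independence across the index $i$ (which is assumed) is all that is used to ensure $h_i(U_i)$ depends only on $U_i$ once $\mathcal{F}_{i-1}$ is fixed. Everything else is bookkeeping around Popoviciu's bound.
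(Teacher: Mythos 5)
Your proof is correct. The paper itself does not prove this proposition: it is recalled as a classical result with a citation to Boucheron--Lugosi--Massart, so there is no ``paper proof'' to compare against; your Doob-martingale-plus-Popoviciu argument is essentially the textbook derivation (equivalently, the Efron--Stein route, bounding each conditional variance by $c_i^2/4$), and your handling of the factor $\beta$ by refining the filtration into $2n$ steps in the complex case is a sensible way to recover the constant $\beta/4$.

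One spot where your justification is slightly loose is the claim that ``changing $\mathrm{Re}(U_i)$ alone corresponds to changing $U_i$ to some admissible value in $\mathcal{B}$'': if $\mathcal{B}$ is not a product of a real and an imaginary part, $r+is \in \mathcal{B}$ does not imply $r'+is \in \mathcal{B}$, and moreover the conditional law of $\mathrm{Im}(U_i)$ given $\mathrm{Re}(U_i)=r$ may depend on $r$, so the two conditional expectations are not averages of $g$-differences over a common coupling. The clean fix is to observe that the hypothesis gives $\sup_{\mathcal{B}} G_i - \inf_{\mathcal{B}} G_i \leq c_i$, where $G_i(u) \equiv \EE[g(U_1,\dots,U_{i-1},u,U_{i+1},\dots,U_n)\mid \mathcal{F}_{i-1}]$; every conditional expectation appearing at either sub-stage is an average of $G_i$ over a probability measure supported on $\mathcal{B}$, hence lies in an interval of length at most $c_i$, and Popoviciu applies at each of the two sub-stages regardless of the joint law of $(\mathrm{Re}(U_i),\mathrm{Im}(U_i))$. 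With that remark inserted, the argument is complete.
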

\noindent
Proposition~\ref{prop:gaussian_poincare} is used to show the concentration of $(\ln \mathcal{Z}_{n,t,\epsilon})/n$ with respect to the Gaussian 
variables $\bzeta$, $\bW$, $\bA^\star$, $\bV$, while Proposition~\ref{prop:bounded_differences} is used to show the concentration  
with respect to $\bZ^\star$. Using this strategy, the proof of \cite{barbier2019optimal} is directly transposed here, and we do not repeat it.

\subsubsection{Proof of Theorem~\ref{thm:concentration_overlap}}

We start with a lemma on the average value of $Q^{(M)}$ under $\EE \langle \cdot \rangle$, in the complex case.
\begin{lemma}\label{lemma:average_value_complex}
    Assume $\beta = 2$. Then
\begin{subnumcases}{}
\nonumber
    \EE \langle Q^{(M)}_{12} \rangle_{n,t,\epsilon} = \EE \langle Q^{(M)}_{21} \rangle_{n,t,\epsilon} = \smallO_n(1), & \\
    \nonumber
    \EE \langle Q^{(M)}_{11} \rangle_{n,t,\epsilon} - \EE \langle Q^{(M)}_{22} \rangle_{n,t,\epsilon} = \smallO_n(1), &
\end{subnumcases}
in which $\smallO_n(1)$ is uniform in $t,\epsilon,q_\epsilon,r_\epsilon$.
\end{lemma}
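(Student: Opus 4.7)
My plan rests on an approximate $U(1)$-invariance of the interpolating model in the complex ($\beta=2$) case, with the symmetry breaking controlled by the small parameter $\epsilon_1 \in \mathcal{D}_n^2$ of operator norm $O(s_n)$. I would proceed in three steps.

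First, I apply the Nishimori identity (Proposition~\ref{prop:nishimori}) to replace $\bZ^\star$ by an independent replica drawn from the posterior. A short calculation, using $\mathrm{Re}[x]\mathrm{Im}[y]=\tfrac{1}{4i}(xy - x\bar y + \bar x y - \bar x \bar y)$ and the Bayes-optimal identity $\EE\langle \bar Z_k^\star z_k\rangle = \EE|\langle z_k\rangle|^2 \in \bbR$, gives
\begin{align*}
\EE\langle Q^{(M)}_{12}\rangle_{n,t,\epsilon} &= \EE\langle Q^{(M)}_{21}\rangle_{n,t,\epsilon} = \frac{1}{2p}\sum_{k=1}^p \mathrm{Im}\,\EE\big[(\langle z_k\rangle_{n,t,\epsilon})^2\big],\\
\EE\langle Q^{(M)}_{11}-Q^{(M)}_{22}\rangle_{n,t,\epsilon} &= \frac{1}{p}\,\mathrm{Re}\sum_{k=1}^p \EE\big[(\langle z_k\rangle_{n,t,\epsilon})^2\big],
\end{align*}
so all three statements of the lemma reduce to the single claim $\EE[(\langle z_k\rangle)^2] = \smallO_n(1)$ uniformly in $k$.

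Second, I would prove this claim \emph{exactly} in the companion model obtained by substituting $R_1(t,\epsilon)$ with its scalar part $R_0\mathbbm{1}_2$, where $R_0 := \tfrac{1}{2}\mathrm{tr}(R_1)$. Under this simplification, consider the joint $U(1)$ rotation $T_\theta:(\bZ^\star,\bV,\bA^\star,\bzeta) \mapsto e^{i\theta}(\bZ^\star,\bV,\bA^\star,\bzeta)$, which preserves the law of each quenched complex Gaussian. Since $P_\mathrm{out}$ depends only on moduli, $\bY_t$ is unchanged; since $R_1=R_0\mathbbm{1}_2$ commutes with scalar phase rotations, $\tilde{\bY}_t \mapsto e^{i\theta}\tilde{\bY}_t$. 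Combining this with the $U(1)$-invariance of the prior $P_0^{(\bS)}$ (inherited from $P_0=\mathcal{N}_2(0,1)$) and the change of variables $(\bz,\ba) \mapsto e^{i\theta}(\bz,\ba)$ inside the posterior integral shows $\langle \bz\rangle \stackrel{d}{=} e^{i\theta}\langle\bz\rangle$ under the companion model. Averaging over $\theta\in[0,2\pi)$ immediately forces $\EE^{\mathrm{sym}}[(\langle z_k\rangle)^2] = 0$.

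Third, I would control the discrepancy between the companion and the original models. The symmetry-breaking perturbation of the Hamiltonian,
$$\delta\mathcal{H}(\bz) = \tfrac{\beta}{2}\sum_{k=1}^p \Big[z_k^\dagger(R_1-R_0\mathbbm{1}_2)z_k - 2\,\tilde{Y}_{t,k}^\dagger\big((R_1^{1/2}-R_0^{1/2}\mathbbm{1}_2)\star z_k\big)\Big],$$
has coefficients whose spectral norm is $O(s_n)$. A Cauchy--Schwarz comparison of the two Gibbs measures, combined with the boundedness of the prior (hypothesis~\ref{hyp:bounded_prior}) and Gaussian concentration of $\bzeta$ (Proposition~\ref{prop:gaussian_poincare}), should give $|\EE[(\langle z_k\rangle)^2] - \EE^{\mathrm{sym}}[(\langle z_k\rangle)^2]| = \smallO_n(1)$ uniformly in $k,t,\epsilon,q_\epsilon,r_\epsilon$, closing the proof.

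The main obstacle is precisely this last step: although $\|R_1-R_0\mathbbm{1}_2\|_{\mathrm{op}} = O(s_n)$, the perturbation couples all $p$ signal coordinates, so a naive expansion produces changes of size $O(p s_n)$ in the log-partition function, which is not a priori small. The key observation I would rely on is that $\delta\mathcal{H}$ couples only to the global phase degree of freedom --- the single ``Goldstone'' direction of the otherwise $U(1)$-symmetric theory --- so its effect on the coordinatewise quantity $\langle z_k\rangle^2$ stays of order $s_n$. Making this rigorous is the heart of the proof; natural routes are either an explicit polar factorisation to integrate out the phase before comparing the two Gibbs measures, or an adaptation of the overlap-concentration machinery of~\cite{barbier2019overlap} already invoked in Theorem~\ref{thm:concentration_overlap} to directly control the phase fluctuations of the posterior.
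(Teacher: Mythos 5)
Your first step (the Nishimori reduction of all three identities to $\EE[(\langle z_k\rangle_{n,t,\epsilon})^2]=\smallO_n(1)$) and your second step (exact vanishing at the symmetric point by a global $U(1)$ rotation) are correct and match the substance of the paper's argument, which likewise establishes the identities exactly when $\epsilon_1=\epsilon_2=0$, i.e.\ when $R_1$ is a multiple of $\mathbbm{1}_2$. One caveat on step two: you justify the invariance of $\bY_t$ by ``$P_\mathrm{out}$ depends only on moduli'', but the lemma is used under hypothesis \ref{hyp:channel}, which allows arbitrary channels. The fix is to rotate $\bW\mapsto e^{-i\theta}\bW$ (whose Gaussian law is phase-invariant) instead of $(\bV,\bA^\star)$, so that $S_{t,\mu}$ and hence $\bY_t$ are left pointwise unchanged for \emph{any} channel; as written your symmetry argument only covers phase-invariant likelihoods.

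The genuine gap is your third step, and you identify it yourself: you do not actually control the discrepancy between the perturbed and symmetric models, and the route you sketch (comparing Gibbs measures through $\delta\mathcal{H}$, which is extensive of size $O(ps_n)$, then arguing that only a single Goldstone direction matters) is both unproven and unnecessarily heavy. The paper avoids a partition-function comparison entirely: it regards the \emph{normalized} one-body observable $(R_1,R_2)\mapsto\EE\langle Q^{(M)}\rangle_{n,t,\epsilon}$ as a function of the side-channel parameters and shows, by differentiation under the integral sign using \ref{hyp:channel}, \ref{hyp:bounded_prior}, \ref{hyp:phi_product}, \ref{hyp:convergence_spectrum_B}, that it is Lipschitz with a constant $K$ uniform in $n,t,\epsilon,q_\epsilon,r_\epsilon$. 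Since the actual $(R_1(t,\epsilon),R_2(t,\epsilon))$ differs from the symmetric point $(R_1(t,0),R_2(t,0))$ only by the perturbation $(\epsilon_1,\epsilon_2)=\mathcal{O}(s_n)$, the conclusion $\smallO_n(1)$ follows at once, with no need to integrate out a phase or invoke the overlap-concentration machinery of \cite{barbier2019overlap} (which, note, is only available after averaging over $\epsilon$ and in integrated form, so it could not deliver the pointwise-in-$(t,\epsilon)$ statement the lemma requires). To complete your proof you should replace step three by this Lipschitz estimate on $\EE\langle Q^{(M)}\rangle$ itself.
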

\begin{proof}[Proof of Lemma~\ref{lemma:average_value_complex}]
    By the classical theorems of continuity and derivability under the integral sign, it is easy to see that $\EE \langle Q^{(M)} \rangle_{n,t,\epsilon}$ is a continuous function of 
    $(R_1,R_2)$, and moreover that it admits a Lipschitz constant $K > 0$, independent of $t,\epsilon,q_\epsilon,r_\epsilon$.
    Indeed, thanks to hypotheses~\ref{hyp:channel},\ref{hyp:bounded_prior},\ref{hyp:phi_product},\ref{hyp:convergence_spectrum_B}, the domination hypotheses of these theorems are satisfied, and one can easily bound the differential of 
    $\EE \langle Q \rangle$ to obtain the existence of the Lipschitz constant $K > 0$.
    Moreover, for $\epsilon_1 = 0$, $\epsilon_2 = 0$, it is easy to check by the Nishimori identity Proposition~\ref{prop:nishimori} that we have: 
\begin{subnumcases}{}
    \EE \langle Q^{(M)}_{12} \rangle_{n,t,\epsilon} = \EE \langle Q^{(M)}_{21} \rangle_{n,t,\epsilon} = 0, & \nonumber \\
    \EE \langle Q^{(M)}_{11} \rangle_{n,t,\epsilon} = \EE \langle Q^{(M)}_{22} \rangle_{n,t,\epsilon}. &\nonumber
\end{subnumcases}
Using the Lipschitz constant $K > 0$ (which does not depend on the parameters $t,\epsilon,q_\epsilon,r_\epsilon$) and the fact that $\epsilon_1,\epsilon_2 = \mathcal{O}(s_n) = \smallO_n(1)$, this ends the proof.
\end{proof}
\noindent
Moreover, once averaged over $\epsilon_2 \in [s_n,2s_n]$ and $t \in (0,1)$, and using the concentration of the free entropy (Theorem~\ref{thm:concentration_free_entropy}), the results of \cite{barbier2019overlap} 
imply the thermal and total concentration of the overlap matrix $Q^{(M)}$ defined in eq.~\eqref{eq:def_Q_QM}:
\begin{lemma}\label{lemma:overlap_concentration}
    Assuming that $(q_\epsilon,r_\epsilon)$ are regular, 
    there exists a sequence $s_n \to 0$ (slowly enough) and $\eta,C > 0$ such that (with $\norm{\cdot}_F$ the Frobenius norm):
    \begin{subnumcases}{}
    \nonumber
        \EE_\epsilon \int_0^1 \mathrm{d}t \ \EE \langle \lVert Q^{(M)} - \langle Q^{(M)} \rangle_{n,t,\epsilon}\rVert_F^2 \rangle_{n,t,\epsilon} \leq \frac{C}{n^\eta}, &\\
        \nonumber
        \EE_\epsilon \int_0^1 \mathrm{d}t \ \EE \langle \lVert Q^{(M)} - \EE \langle Q^{(M)} \rangle_{n,t,\epsilon}\rVert_F^2 \rangle_{n,t,\epsilon} \leq \frac{C}{n^\eta}. &
    \end{subnumcases}
\end{lemma}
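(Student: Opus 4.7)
\textbf{Proof plan for Lemma~\ref{lemma:overlap_concentration}.} The strategy is the standard perturbation argument of \cite{barbier2019overlap} adapted to the present interpolated model, combined with a change-of-variables step that exploits the regularity of $(q_\epsilon,r_\epsilon)$. The role of $\epsilon_1 \in \mathcal{D}_n^\beta$ is to inject a small matrix-valued side-information channel through the second observation $\tilde{\bY}_t$ of \eqref{eq:auxiliary_channel_prior}, so that differentiating the free entropy in $R_1$ produces the components of the overlap matrix $Q^{(M)}$. For $\beta=2$ one needs $\epsilon_1$ to range over a non-trivial open subset of $\mathcal{S}_2^+(\bbR)$ in order to control all four entries of $Q^{(M)}$ simultaneously, which is precisely why $\mathcal{D}_n^\beta$ was defined as a box of strictly diagonally dominant matrices.

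First I would compute, using Gaussian integration by parts with respect to the noise $\bzeta$ in \eqref{eq:auxiliary_channel_prior} together with the Nishimori identity (Proposition~\ref{prop:nishimori}), the identity
\begin{align*}
\frac{1}{n}\nabla_{R_1}\,\EE\ln \mathcal{Z}_{n,t,\epsilon}(\bY_t,\tilde{\bY}_t) \;=\; \frac{\beta\,p}{2n}\,\EE\langle Q^{(M)}\rangle_{n,t,\epsilon} \;+\; O(1),
\end{align*}
where the derivative is taken entry-wise in $\mathcal{S}_\beta(\bbR)$. A second differentiation in $R_1$ then produces fluctuations of $Q^{(M)}$: for each fixed direction $\mathbf{D}\in\mathcal{S}_\beta(\bbR)$, the second derivative of $(1/n)\ln \mathcal{Z}_{n,t,\epsilon}$ yields (after Nishimori) a positive quantity of the form $c\,\EE\langle \|Q^{(M)}-\langle Q^{(M)}\rangle\|_F^2\rangle$ plus controlled remainders, while its $\EE$-derivative gives the total variance $\EE\langle\|Q^{(M)}-\EE\langle Q^{(M)}\rangle\|_F^2\rangle$. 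Concretely this reproduces the two identities used in \cite{barbier2019overlap,barbier2019optimal,aubin2018committee}, with $Q^{(M)}$ replacing the scalar overlap.

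Next I would convert fluctuations into bounds. For \emph{thermal} concentration, the second derivative of $(1/n)\ln \mathcal{Z}_{n,t,\epsilon}$ in $\epsilon_1$ is non-negative in each direction, so the entry-wise variance is dominated by the total variation of the first derivative across $\mathcal{D}_n^\beta$; since the first derivative is itself uniformly bounded (by hypotheses \ref{hyp:channel},\ref{hyp:bounded_prior},\ref{hyp:convergence_spectrum_B}) and $\mathrm{Vol}(\mathcal{D}_n^\beta) = \Theta(s_n^{\beta(\beta+1)/2})$, integrating in $\epsilon_1$ yields a control of the form $(1/s_n)\cdot(\text{bounded})$ divided by $n$. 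For \emph{total} concentration, the key input is Theorem~\ref{thm:concentration_free_entropy}, which after integration by parts on $\epsilon_1$ (a Laplace-type argument used in Appendix~B.2 of \cite{barbier2019optimal}) turns $O(1/n)$ fluctuations of the free entropy into $O(1/(n\,s_n^{\beta(\beta+1)/2}))$ fluctuations of its $\epsilon_1$-derivative, hence of the overlap. Choosing $s_n\to 0$ slowly enough (e.g.\ $s_n = n^{-\eta'}$ for a small $\eta'>0$) then makes both bounds summable to a common $C n^{-\eta}$.

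Finally, to pass from the concentration integrated over $\epsilon$ alone to concentration integrated over $\epsilon$ \emph{and} $t$, I would use the regularity assumption (Definition~\ref{def:regularity}): the change of variables $\epsilon \mapsto R(t,\epsilon)$ is a $\mathcal{C}^1$-diffeomorphism with Jacobian bounded below by $\gamma>0$ uniformly in $t$. Applying the previous variance bounds at each $t$ in the $R$-variables, and then pulling back with the uniform Jacobian lower bound, yields the desired integrated bound in $(t,\epsilon)$, with the $n^{-\eta}$ rate uniform in $r_\epsilon,q_\epsilon$. Combining with Lemma~\ref{lemma:average_value_complex} (ensuring that the off-diagonal entries of $\EE\langle Q^{(M)}\rangle$ are $o_n(1)$) then immediately implies the statement of Theorem~\ref{thm:concentration_overlap} once we recall that $\mathrm{Re}[Q]=\mathrm{Tr}_\beta[Q^{(M)}]$.

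The main obstacle I anticipate is the control of the $\epsilon_1$-derivative in the complex case $\beta=2$: the variance estimate requires the full matrix derivative, not just one scalar parameter, so the Laplace argument must handle a $3$-dimensional integration domain $\mathcal{D}_n^2$ while keeping the remainders of order $s_n$ small enough. This is exactly the point where one must balance the scale of $s_n$ against the $1/n$ concentration rate of Theorem~\ref{thm:concentration_free_entropy}, and the choice of strictly diagonally dominant matrices (ensuring $\epsilon_1 \in \mathcal{S}_\beta^+(\bbR)$ throughout the perturbation) is precisely what makes the free entropy $\mathcal{C}^2$ and convex in $R_1$ on $\mathcal{D}_n^\beta$, as required by the argument.
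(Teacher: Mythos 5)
Your proposal is correct and follows essentially the same route as the paper: the paper's proof simply invokes the overlap-concentration machinery of \cite{barbier2019overlap}, citing exactly the two inputs you identify — the free-entropy concentration of Theorem~\ref{thm:concentration_free_entropy} and the regularity of $(q_\epsilon,r_\epsilon)$, the latter used to pull the bounds (stated as integrals over $R_1(t,\epsilon)$) back to integrals over $\epsilon_1$ via the uniform Jacobian lower bound. Your sketch fills in the internal steps of that cited machinery (first/second derivatives in $R_1$, convexity, the balance of $s_n$ against the $1/n$ rate) that the paper leaves implicit, but the argument is the same.
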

\begin{proof}[Proof of Lemma~\ref{lemma:overlap_concentration}]
    We can use the results of \cite{barbier2019overlap}, under two conditions: $(i)$ the concentration of the free entropy, 
    which is given here by Theorem~\ref{thm:concentration_free_entropy}, and $(ii)$ the regularity of $(q_\epsilon,r_\epsilon)$. Indeed, the results of \cite{barbier2019optimal} 
    give the concentration results as integrated over the matrix $R_1(t,\epsilon)$. Using the regularity assumption, we can lower bound these integrals by 
    integrals over the perturbation matrix $\epsilon_1$ (up to a multiplicative constant, which is uniform in all the relevant parameters), which then yields Lemma~\ref{lemma:overlap_concentration}.
    This argument was also made in a very close setting in \cite{barbier2019optimal,aubin2018committee}.
\end{proof}
\noindent
Using Lemma~\ref{lemma:average_value_complex} (if $\beta = 1$ this lemma is not needed) alongside Lemma~\ref{lemma:overlap_concentration} 
yields Theorem~\ref{thm:concentration_overlap}, since $Q = \mathrm{Tr}_\beta[Q^{(M)}]$.

\subsection{Upper and lower bounds}

\begin{proposition}[Fundamental sum rule]\label{prop:fundamental_sum_rule}
    Assume that $(q_\epsilon,r_\epsilon)$ are regular (cf Definition~\ref{def:regularity}), and that for all $\epsilon \in \mathcal{D}_n^\beta \times [s_n,2s_n]$ and 
    $t \in (0,1)$ we have $q_\epsilon(t) = \mathrm{Tr}_\beta[\EE\langle Q^{(M)}\rangle_{n,t,\epsilon}]$. 
    Then:
    \begin{align*}
        f_n &= \EE_\epsilon \Big[\Psi_0^{(\nu)}\Big(\int_0^1 r_\epsilon(t) \mathrm{d}t\Big) + \alpha \Psi_\mathrm{out}\Big(\int_0^1 q_\epsilon(t)\mathrm{d}t\Big) - \frac{\beta \delta}{2} \int_0^1 q_\epsilon(t) r_\epsilon(t) \mathrm{d}t\Big] + \smallO_n(1),
    \end{align*}
    in which $\smallO_n(1)$ is uniform in the choice of $q_\epsilon,r_\epsilon$.
\end{proposition}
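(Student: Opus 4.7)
The plan is to combine the identity~\eqref{eq:fundamental_analysis_theorem}, the boundary value $f_{n,\epsilon}(1)$ from Lemma~\ref{lemma:free_entropy_t01}, and the derivative formula from Lemma~\ref{lemma:free_entropy_variation}, and then use the overlap concentration Theorem~\ref{thm:concentration_overlap} to dispose of the remaining fluctuation term. Concretely, inserting the expressions from Lemmas~\ref{lemma:free_entropy_t01} and~\ref{lemma:free_entropy_variation} into eq.~\eqref{eq:fundamental_analysis_theorem} and integrating the deterministic piece of $f_{n,\epsilon}'$ gives, up to $\smallO_n(1)$ terms uniform in $q_\epsilon,r_\epsilon$,
\begin{align*}
f_n &= \Psi_0^{(\nu)}\Big(\int_0^1 r_\epsilon(t)\,\mathrm{d}t\Big) + \alpha \Psi_\mathrm{out}\Big(\int_0^1 q_\epsilon(t)\,\mathrm{d}t\Big) - \frac{\beta \delta}{2} \int_0^1 r_\epsilon(t) q_\epsilon(t)\,\mathrm{d}t \\
&\quad + \Big[\frac{\beta\delta Q_z}{2}\int_0^1 r_\epsilon(t)\,\mathrm{d}t - \frac{\beta}{2} \EE_{\nu_B}[X]\int_0^1 r_\epsilon(t)\,\mathrm{d}t\Big] + \mathcal{F}_n(\epsilon) + \smallO_n(1),
\end{align*}
where the bracket vanishes identically thanks to $\delta Q_z = \EE_{\nu_B}[X]$ (which is how $Q_z$ is fixed in Proposition~\ref{prop:replicas_product_structure}), and
\begin{align*}
\mathcal{F}_n(\epsilon) \equiv \frac{1}{2\beta}\int_0^1 \EE\Big\langle \Big(\frac{1}{n}\sum_{\mu=1}^m u'_{Y_{t,\mu}}(S_{t,\mu})^\dagger u'_{Y_{t,\mu}}(s_{t,\mu}) - \beta^2 \delta\, r_\epsilon(t)\Big)\cdot (Q - q_\epsilon(t))\Big\rangle_{n,t,\epsilon}\,\mathrm{d}t.
\end{align*}
It thus remains to show that $\EE_\epsilon \mathcal{F}_n(\epsilon) = \smallO_n(1)$.

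Two successive Cauchy--Schwarz inequalities (one inside the Gibbs bracket, one inside the $\EE_\epsilon \int_0^1 \mathrm{d}t$ average) bound $|\EE_\epsilon \mathcal{F}_n(\epsilon)|^2$ by a constant times the product of $\EE_\epsilon \int_0^1 \EE\langle\lVert \tfrac{1}{n}\sum_\mu u'^\dagger u' - \beta^2\delta\, r_\epsilon(t)\rVert^2\rangle\,\mathrm{d}t$ and $\EE_\epsilon \int_0^1 \EE\langle |Q - q_\epsilon(t)|^2\rangle_{n,t,\epsilon}\,\mathrm{d}t$. The first factor is uniformly bounded under hypothesis~\ref{hyp:channel} (which yields a bounded $u'$) together with $r_\epsilon(t) \in [0,r_\mathrm{max}]$. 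For the second, write $Q - q_\epsilon(t) = (Q - \EE\langle Q\rangle_{n,t,\epsilon}) + (\EE\langle Q\rangle_{n,t,\epsilon} - q_\epsilon(t))$; then $|\EE\langle Q\rangle - q_\epsilon(t)| = |\EE\langle \mathrm{Im}[Q]\rangle|$, which vanishes uniformly by Lemma~\ref{lemma:average_value_complex} in the complex case and is identically $0$ in the real case. The remaining term $\EE\langle|Q-\EE\langle Q\rangle|^2\rangle$ decomposes into the squared fluctuations of the entries of $Q^{(M)}$ (since $\mathrm{Re}[Q] = \mathrm{Tr}_\beta[Q^{(M)}]$ and, for $\beta=2$, $\mathrm{Im}[Q] = Q^{(M)}_{12} - Q^{(M)}_{21}$), which after $\EE_\epsilon \int_0^1 \mathrm{d}t$ is $\smallO_n(1)$ by Theorem~\ref{thm:concentration_overlap} (whose applicability requires precisely the regularity assumption on $(q_\epsilon,r_\epsilon)$). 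Combining the two factors gives $\EE_\epsilon \mathcal{F}_n(\epsilon) = \smallO_n(1)$, and averaging the displayed identity over $\epsilon$ concludes the proof.

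The substantive input of the argument is the overlap concentration Theorem~\ref{thm:concentration_overlap} together with the complex-case average vanishing of Lemma~\ref{lemma:average_value_complex}; the remainder of the proof is algebraic, reducing to a pair of cancellations ($\delta Q_z = \EE_{\nu_B}[X]$, and $+\tfrac{\beta\delta}{2} - \tfrac{\beta\delta}{2}$ from the boundary/derivative contributions). The choice $q_\epsilon(t) = \mathrm{Tr}_\beta[\EE\langle Q^{(M)}\rangle_{n,t,\epsilon}]$ is tailored exactly so that both sides of the Cauchy--Schwarz bound on $\mathcal{F}_n$ degenerate simultaneously, the first by boundedness and the second by concentration.
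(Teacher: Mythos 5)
Your proof is correct and follows essentially the same route as the paper's: insert Lemmas~\ref{lemma:free_entropy_t01} and~\ref{lemma:free_entropy_variation} into eq.~\eqref{eq:fundamental_analysis_theorem}, observe the cancellations coming from $\delta Q_z = \EE_{\nu_B}[X]$, and kill the fluctuation term via Cauchy--Schwarz together with Lemma~\ref{lemma:boundedness_fluctuation} (for the first factor) and Theorem~\ref{thm:concentration_overlap} (for the second). Your handling of the $\EE\langle Q\rangle$ versus $q_\epsilon(t)=\EE\langle\mathrm{Re}[Q]\rangle$ discrepancy in the complex case --- invoking Lemma~\ref{lemma:average_value_complex} to control $\EE\langle\mathrm{Im}[Q]\rangle$ --- is a step the paper's proof leaves implicit, and is a welcome added precision rather than a deviation.
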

\begin{proof}[Proof of Proposition~\ref{prop:fundamental_sum_rule}]
    The proof is based on Lemma~\ref{lemma:free_entropy_t01} and Lemma~\ref{lemma:free_entropy_variation}. Replacing their results into eq~\eqref{eq:fundamental_analysis_theorem}, 
    in order to finish the proof, we only need to show that $\lim_{n \to \infty} \Gamma_n = 0$ (uniformly in $r_\epsilon,q_\epsilon$), with 
    \begin{align*}
       \Gamma_n &\equiv \Big(\EE_\epsilon \int_0^1 \mathrm{d}t \ \EE\Big \langle \Big(\frac{1}{n} \sum_{\mu=1}^m u_{Y_{t,\mu}}'(S_{t,\mu})^\dagger u'_{Y_{t,\mu}}(s_{t,\mu}) - \beta^2 \delta r_\epsilon(t)\Big) \cdot \Big(q_\epsilon(t)- Q\Big) \Big \rangle_{n,t,\epsilon} \Big)^2.
    \end{align*}
    By the Cauchy-Schwarz inequality, we can bound:
    \begin{align*}
        \Gamma_n &\leq \EE_\epsilon \int_0^1 \mathrm{d}t \ \EE\Big \langle \Big|\frac{1}{n} \sum_{\mu=1}^m u_{Y_{t,\mu}}'(S_{t,\mu})^\dagger u'_{Y_{t,\mu}}(s_{t,\mu}) - \beta^2 \delta r_\epsilon(t)\Big|^2\Big\rangle_{n,t,\epsilon}  \\ 
        & \hspace{1cm} \times \EE_\epsilon \int_0^1 \mathrm{d}t \ \EE\langle |Q - q_\epsilon(t)|^2\rangle_{n,t,\epsilon}.
    \end{align*}
    The first term is bounded by a constant $C > 0$ by Lemma~\ref{lemma:boundedness_fluctuation} (recall that $r_\epsilon(t)$ is bounded as well by $r_\mathrm{max}$). 
    By Theorem~\ref{thm:concentration_overlap}, the second term is $\smallO_n(1)$, uniformly in $q_\epsilon,r_\epsilon$,
    since we assumed that $q_\epsilon(t) = \mathrm{Tr}_\beta[\EE \langle Q \rangle]$.
   As the vanishing terms are uniform in $q_\epsilon,r_\epsilon$, this shows that $\lim_{n \to \infty} \Gamma_n = 0$, which ends the proof.
\end{proof}
\noindent
Before obtaining the two bounds from the fundamental sum rule, we need a final preparatory lemma, that will 
imply the regularity of the functions $(q_\epsilon,r_\epsilon)$ that we will chose to derive the bounds.
\begin{lemma}[Regularity]\label{lemma:diffeomorphism}
    We define $F_n(t,R(t,\epsilon)) = (F_n^{(1)}(t,R(t,\epsilon)),F_n^{(2)}(t,R(t,\epsilon)))$, with:
    \begin{subnumcases}{}
    \nonumber
    F_n^{(1)}(t,R(t,\epsilon)) \equiv \Big(\frac{2 \alpha}{\beta \delta} \Psi_{\mathrm{out}}'(\mathrm{Tr}_\beta[\EE \langle Q^{(M)} \rangle_{n,t,\epsilon}])\Big) \mathbbm{1}_\beta, & \\
    \nonumber
    F_n^{(2)}(t,R(t,\epsilon)) \equiv \mathrm{Tr}_\beta[\EE \langle Q^{(M)} \rangle_{n,t,\epsilon}]. &
    \end{subnumcases}
    Then $F_n$ is a continuous function from its domain to $\bbR^2$. Moreover, it admits partial derivatives with respect to both $R_1$ and $R_2$ on the interior 
    of its domain. We have, uniformly over the choice of $(q_\epsilon,r_\epsilon)$:
    \begin{subnumcases}{}
    \nonumber
        \liminf_{n \to \infty} \inf_{t \in (0,1)} \inf_{\substack{\epsilon_1 \in \mathcal{D}_n \\ \epsilon_2 \in [s_n,2s_n]}} \sum_{l=1}^\beta \frac{\partial (F_n^{(1)})_{ll}}{\partial (R_1)_{ll}}(t,R(t,\epsilon)) \geq 0, &\\
        \nonumber
        \frac{\partial F_n^{(2)}}{\partial R_2}(t,R(t,\epsilon)) \geq 0. &
    \end{subnumcases}
\end{lemma}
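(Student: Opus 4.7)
The plan is to establish the regularity (continuity and existence of partial derivatives) via differentiation under the integral sign, and then reduce the two monotonicity inequalities to standard Bayes-optimal identities: Gaussian integration by parts, the Nishimori identity, and the convexity of $\Psi_\mathrm{out}$ on $[0, Q_z]$ already established earlier in the appendix.

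First, for continuity and existence of the partial derivatives: under \ref{hyp:channel},\ref{hyp:bounded_prior},\ref{hyp:phi_product},\ref{hyp:convergence_spectrum_B}, the integrand of the posterior in eq.~\eqref{eq:def_posterior_interpolated} is jointly smooth in $(R_1, R_2)$ and uniformly bounded on compact sets, thanks to the boundedness of $\varphi_\mathrm{out}$ together with its first two $z$-derivatives, to the compact support of $\nu_B$, and to the bounded support of $P_0$. Dominated convergence then gives continuity of $\EE\langle Q^{(M)}\rangle_{n,t,\epsilon}$ and allows the partial derivatives $\partial/\partial(R_1)_{ll}$ and $\partial/\partial R_2$ to commute with both the outer expectation and the Gibbs average. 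Since $\Psi_\mathrm{out}$ is $\mathcal{C}^2$, $F_n^{(1)}$ inherits $\mathcal{C}^1$ regularity from $F_n^{(2)}$ by composition.

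For the inequality $\partial F_n^{(2)}/\partial R_2 \geq 0$: the parameter $R_2$ enters the Hamiltonian only through $s_{t,\mu}$ and $S_{t,\mu}$ in eqs.~\eqref{eq:def_stmu}--\eqref{eq:def_Stmu}. Differentiating $\EE\langle Q\rangle_{n,t,\epsilon}$ and performing Gaussian integration by parts in $V_\mu$ and $A_\mu^\star$ (which are the only variables whose coefficients depend on $R_2$), followed by the Nishimori identity (Proposition~\ref{prop:nishimori}), produces cancellations of the cross terms and leaves a non-negative Gibbs covariance built from observables of the form $u'_{Y_{t,\mu}}(s_{t,\mu})$ and $z$. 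This is the standard Bayes-optimal fluctuation-dissipation identity underlying the I--MMSE relation: the posterior overlap is non-decreasing along SNR-increasing perturbations of the channel. A parallel computation handles $\sum_l \partial F_n^{(2)}/\partial (R_1)_{ll}$: a uniform shift of $R_1$ along $\mathbbm{1}_\beta$ strictly raises the SNR of the auxiliary Gaussian side channel~\eqref{eq:auxiliary_channel_prior}, and the derivative of $\EE\langle Q\rangle$ along that direction is again a non-negative Gibbs variance obtained by integration by parts in $\bzeta$ combined with Nishimori. The bound for $F_n^{(1)}$ then follows by the chain rule: since each diagonal entry $(F_n^{(1)})_{ll}$ equals the common scalar $\tfrac{2\alpha}{\beta\delta}\,\Psi'_\mathrm{out}(F_n^{(2)})$,
\begin{equation*}
\sum_{l=1}^\beta \frac{\partial (F_n^{(1)})_{ll}}{\partial (R_1)_{ll}} \,=\, \frac{2\alpha}{\beta\delta}\,\Psi''_\mathrm{out}(F_n^{(2)})\,\sum_{l=1}^\beta \frac{\partial F_n^{(2)}}{\partial (R_1)_{ll}},
\end{equation*}
and convexity of $\Psi_\mathrm{out}$ on $[0, Q_z]$, a consequence of Proposition~18 of \cite{barbier2019optimal} recalled in Section~\ref{subsec:app_interpolating_model}, yields $\Psi''_\mathrm{out} \geq 0$. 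Combined with the non-negativity of the overlap derivative, the liminf bound follows uniformly in $(t,\epsilon,q_\epsilon,r_\epsilon)$.

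The main obstacle I anticipate is technical rather than conceptual: in the complex case $\beta = 2$, $Q^{(M)}$ is a $2\times 2$ symmetric matrix and the integration-by-parts identities mix real and imaginary parts of $\bz$ and $\bZ^\star$. To conclude that the sum over diagonal indices of $R_1$ is non-negative, one has to invoke the same Nishimori symmetries underlying Lemma~\ref{lemma:average_value_complex} so that off-diagonal cross terms vanish in expectation and only diagonal Gibbs variances survive. The appearance of a liminf, rather than an $n$-wise strict inequality, reflects the $\smallO_n(1)$ corrections from these symmetry-restoring terms (exactly as in Lemma~\ref{lemma:average_value_complex}); once this bookkeeping is handled the non-negativity statements are immediate.
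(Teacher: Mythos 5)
Your proposal shares the same structural skeleton as the paper's proof: continuity and differentiability via domination under the boundedness hypotheses, the chain-rule reduction
\begin{equation*}
\sum_{l=1}^\beta \frac{\partial (F_n^{(1)})_{ll}}{\partial (R_1)_{ll}} = \frac{2\alpha}{\beta\delta}\,\Psi''_{\mathrm{out}}\!\left(\mathrm{Tr}_\beta\,\EE\langle Q^{(M)}\rangle\right)\sum_{l=1}^\beta \frac{\partial\,\mathrm{Tr}_\beta\,\EE\langle Q^{(M)}\rangle}{\partial (R_1)_{ll}}
\end{equation*}
with $\Psi''_{\mathrm{out}}\geq 0$ by convexity, and both inequalities traced back to the monotonicity of $\mathrm{Tr}_\beta\,\EE\langle Q^{(M)}\rangle$ in the relevant SNR parameter, with Lemma~\ref{lemma:average_value_complex} absorbing the $\smallO_n(1)$ corrections in the complex case. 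Where you diverge is how the monotonicities are established. The paper outsources them: for the $R_1$ direction it invokes the positivity of the matrix $\nabla_{R_1}\EE\langle Q^{(M)}\rangle$ for the linear Gaussian side channel from \cite{aubin2018committee}; for the $R_2$ direction it rewrites $Q_z - \mathrm{Tr}_\beta\,\EE\langle Q^{(M)}\rangle$ as the normalized MMSE of $\bZ^\star$ and cites Proposition~6 of \cite{barbier2019optimal} (MMSE non-increasing under increased side-information SNR for a generalized linear channel). You instead propose to re-derive both from scratch via Gaussian integration by parts and Nishimori. For the $R_1$ direction this is essentially the calculation behind the cited matrix-MMSE positivity, so it goes through. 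For the $R_2$ direction, however, your claim that one ends up with ``a non-negative Gibbs covariance'' after cancellations is the crux and deserves more care than the sketch gives: $R_2$ does not scale a clean linear SNR but instead reallocates weight inside $P_\mathrm{out}$ from the unknown $A_\mu^\star$ to the known $V_\mu$, so the integration-by-parts bookkeeping produces a mix of terms involving $u'_{Y_{t,\mu}}$ whose sign is not obviously a single posterior variance. The result is true, and Proposition~6 of \cite{barbier2019optimal} sidesteps exactly this bookkeeping (via a channel-degradation argument rather than a direct IBP sign computation); you would need to carry out, or cite, that argument to close the gap rather than asserting the fluctuation-dissipation picture.
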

\begin{proof}[Proof of Lemma~\ref{lemma:diffeomorphism}]
    The proof is very close to the arguments of Lemma~5.5 of \cite{aubin2018committee}.
    The continuity and derivability follow from standard theorems of continuity and derivation 
    under the integral sign, thanks to hypotheses \ref{hyp:channel},\ref{hyp:bounded_prior},\ref{hyp:convergence_spectrum_B}.
    Indeed, under these boundedness assumptions, the domination hypotheses of these theorems are straightforwardly satisfied.
    Let us start with the first inequality.
    We can easily write:
    \begin{align*}
         \sum_{l=1}^\beta \frac{\partial (F_n^{(1)})_{ll}}{\partial (R_1)_{ll}}= \frac{2 \alpha}{\beta \delta} \Psi_\mathrm{out}''(\mathrm{Tr}_\beta[\EE\langle Q^{(M)} \rangle]) \sum_{l=1}^\beta \frac{\partial \mathrm{Tr}_\beta \EE \langle Q^{(M)} \rangle}{\partial (R_1)_{ll}}.
    \end{align*}
    The convexity of $\Psi_\mathrm{out}$ was already derived so that $\Psi_\mathrm{out}'' \geq 0$.
    Moreover, since $R_1$ is the SNR matrix of a linear channel, we know that the matrix $\nabla_{R_1} \EE \langle Q^{(M)} \rangle$ is positive \cite{aubin2018committee}. In particular, its trace is always positive, 
    and by Lemma~\ref{lemma:average_value_complex}:
    \begin{align*}
    \sum_{l=1}^\beta \frac{\partial \mathrm{Tr}_\beta \EE \langle Q^{(M)} \rangle}{\partial (R_1)_{ll}} &= \underbrace{\mathrm{Tr}_\beta[\nabla_{R_1} \EE \langle Q^{(M)} \rangle]}_{\geq 0} + \smallO_n(1),
    \end{align*}
    with a $\smallO_n(1)$ uniform in $t,\epsilon,r_\epsilon,q_\epsilon$. This shows the first inequality.
    Let us sketch the argument for the second inequality. The trace of $Q^{(M)}$ is directly related to the MMSE on the complex vector $\bZ^\star$ by:
    \begin{align*}
        \frac{1}{p} \mathrm{MMSE}(\bZ^\star|\bY_t,\tilde{\bY}_t,\bV,\bW) &= \frac{1}{p} \EE [\lVert\bZ^\star - \langle \bz \rangle\rVert^2] = Q_z - \mathrm{Tr}_\beta[\EE \langle Q^{(M)} \rangle].
    \end{align*} 
    The fact that the MMSE should decrease as the SNR $R_2$ increases, for a channel of the type of eq.~\eqref{eq:auxiliary_channel_Pout},
    is very natural, and it was proven in Proposition~6 of \cite{barbier2019optimal}, which applies here. This proposition yields that $\mathrm{Tr}_\beta[\EE \langle Q^{(M)} \rangle]$ is a nondecreasing function of $R_2$, 
    which ends the proof.
\end{proof}
\noindent
Finally, we define the \emph{replica-symmetric potential}, that appears in Proposition~\ref{prop:replicas_product_structure}:
\begin{align*}
 f_\mathrm{RS}(q,r) &\equiv -\frac{\beta \delta r q}{2} + \Psi_0^{(\nu)}(r) + \alpha \Psi_{\mathrm{out}}(q).
\end{align*}

\subsubsection{Lower bound}\label{subsubsec:lower_bound}

\begin{proposition}[Lower bound]\label{prop:lower_bound}
    Under the assumptions of Theorem~\ref{thm:main_product_gaussian_rotinv}, the free entropy $f_n$ satisfies:
    \begin{align*}
        \liminf_{n \to \infty} f_n \geq \sup_{r \geq 0} \inf_{q \in [0,Q_z]} f_{\mathrm{RS}}(q,r).
    \end{align*}
\end{proposition}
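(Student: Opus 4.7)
The strategy will be adaptive interpolation: I will apply the fundamental sum rule (Proposition~\ref{prop:fundamental_sum_rule}) to a carefully chosen family of interpolation functions, and then use the trivial inequality $f_\mathrm{RS}(\bar q,r)\geq\inf_{q\in[0,Q_z]} f_\mathrm{RS}(q,r)$ inside the $\EE_\epsilon$ average. Fix $r\in[0,r_\mathrm{max}]$ arbitrary. I would take the constant interpolator $r_\epsilon(t)\equiv r$ together with the adaptive choice $q_\epsilon(t)=\mathrm{Tr}_\beta[\EE\langle Q^{(M)}\rangle_{n,t,\epsilon}]=F_n^{(2)}(t,R(t,\epsilon))$, which is exactly the assumption required to invoke the sum rule. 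With $R_1(t,\epsilon)=\epsilon_1+tr\mathbbm{1}_\beta$ given explicitly, $R_2(t,\epsilon)$ is then defined implicitly by the ODE $\partial_t R_2 = F_n^{(2)}(t,\epsilon_1+tr\mathbbm{1}_\beta,R_2)$ with initial condition $R_2(0,\epsilon)=\epsilon_2$; the Cauchy--Lipschitz theorem ensures existence and uniqueness, the required Lipschitz bounds on $F_n^{(2)}$ being uniform in $n$ via standard domination arguments under the integral sign using \ref{hyp:channel},\ref{hyp:bounded_prior},\ref{hyp:convergence_spectrum_B}.

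The decisive step is the regularity verification of Definition~\ref{def:regularity}. Since $R_1$ depends only on $\epsilon_1$ affinely, the Jacobian of $\epsilon\mapsto R(t,\epsilon)$ is block triangular with the identity on the $R_1$-block, so its determinant reduces to $\partial_{\epsilon_2}R_2(t,\epsilon)$. The variational equation associated with the ODE then gives
\begin{align*}
\partial_{\epsilon_2}R_2(t,\epsilon)=\exp\Big(\int_0^t\partial_{R_2}F_n^{(2)}(s,R(s,\epsilon))\,\mathrm{d}s\Big)\geq 1,
\end{align*}
where the nonnegativity of $\partial_{R_2}F_n^{(2)}$ is exactly the monotonicity statement of Lemma~\ref{lemma:diffeomorphism}. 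Hence $J_{n,\epsilon}(t)\geq 1$ uniformly in $n,t,\epsilon$ and the map is globally injective, so $(q_\epsilon,r_\epsilon)$ is regular.

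Proposition~\ref{prop:fundamental_sum_rule} then applies. Writing $\bar q_\epsilon\equiv\int_0^1 q_\epsilon(t)\,\mathrm{d}t\in[0,Q_z]$ and using that $r_\epsilon$ is constant, the term $\int_0^1 q_\epsilon(t)r_\epsilon(t)\,\mathrm{d}t=r\bar q_\epsilon$ collapses, and the right-hand side of the sum rule is exactly $f_\mathrm{RS}(\bar q_\epsilon,r)$. Therefore
\begin{align*}
f_n=\EE_\epsilon\big[f_\mathrm{RS}(\bar q_\epsilon,r)\big]+\smallO_n(1)\geq\inf_{q\in[0,Q_z]}f_\mathrm{RS}(q,r)+\smallO_n(1),
\end{align*}
with the $\smallO_n(1)$ uniform in $r$ thanks to the uniformity in $q_\epsilon,r_\epsilon$ built into Proposition~\ref{prop:fundamental_sum_rule}. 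Taking $\liminf_n$ and then the supremum over $r\in[0,r_\mathrm{max}]$ yields the bound with a restricted supremum. Extending to all $r\geq 0$ is a technicality: using $\Psi_0^{(\nu)}(r)-\frac{\beta\delta r Q_z}{2}\to-\infty$ as $r\to\infty$ (since $\nu_B\neq\delta_0$ by \ref{hyp:convergence_spectrum_B}), the function $r\mapsto\inf_{q}f_\mathrm{RS}(q,r)$ goes to $-\infty$, so its supremum over $r\geq 0$ is attained in a compact interval which can be accommodated inside the admissible range of the interpolator.

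The main obstacle I anticipate is the regularity verification: the adaptive prescription makes $R(t,\epsilon)$ depend on $\epsilon$ in a complicated, $n$-dependent way through the Gibbs average $\EE\langle Q^{(M)}\rangle_{n,t,\epsilon}$, and Definition~\ref{def:regularity} requires the Jacobian lower bound to be uniform in $n,t,\epsilon$. This uniformity is precisely what is needed so that the change of variable in the $\EE_\epsilon$ average (which converts overlap concentration in $R$, cf.\ Theorem~\ref{thm:concentration_overlap}, into concentration in $\epsilon$) produces only a controlled $\smallO_n(1)$ error. Lemma~\ref{lemma:diffeomorphism}, via an I-MMSE-type monotonicity in the side-information channels, is exactly what renders this pullback harmless.
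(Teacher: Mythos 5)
Your proposal is correct and follows essentially the same route as the paper's proof: constant $r_\epsilon\equiv r$, the adaptive choice $q_\epsilon(t)=\mathrm{Tr}_\beta[\EE\langle Q^{(M)}\rangle_{n,t,\epsilon}]$ defined via the ODE, regularity from the Liouville/variational formula combined with Lemma~\ref{lemma:diffeomorphism}, and then the fundamental sum rule followed by $f_\mathrm{RS}(\bar q_\epsilon,r)\geq\inf_q f_\mathrm{RS}(q,r)$. Your explicit handling of the extension of the supremum from a compact range of $r$ to all $r\geq 0$ is a minor point the paper glosses over, but it does not change the argument.
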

\begin{proof}[Proof of Proposition~\ref{prop:lower_bound}]
    We fix $r \geq 0$ and $R_1(t) = \epsilon_1 + r t \mathbbm{1}_\beta$. We then choose $R_2(t)$ as the unique solution to the ordinary differential equation:
    \begin{align}\label{eq:ode_lower_bound}
        R_2'(t) = \mathrm{Tr}_\beta[\EE \langle Q^{(M)}\rangle_{n,t,\epsilon}], 
    \end{align}
    with boundary condition $R_2(0) = \epsilon_2$. We denote this unique solution as $R_2(t) = \epsilon_2 + \int_0^t q_\epsilon(r;v)\mathrm{d}v$.
    The ODE of eq.~\eqref{eq:ode_lower_bound} can easily be seen to satisfy the hypotheses of the parametric Cauchy-Lipschitz theorem (as a function of the initial condition $\epsilon_2$), 
    and by the Liouville formula (cf Lemma~A.3 of \cite{aubin2018committee}), the Jacobian $J_{n,\epsilon}(t)$ of $\epsilon \mapsto R(t,\epsilon) \equiv (R_1(t,\epsilon),R_2(t,\epsilon))$ 
    verifies:
    \begin{align*}
        J_{n,\epsilon}(t) &= \exp \Big(\int_0^t \frac{\partial \mathrm{Tr}_\beta[\EE \langle Q \rangle_{n,u,\epsilon}]}{\partial R_2}(u,R(u,\epsilon)) \mathrm{d}u\Big) \geq 1,
    \end{align*}
    in which the inequality is a consequence of Lemma~\ref{lemma:diffeomorphism}. The functions are thus regular in the sens of Definition~\ref{def:regularity}, and moreover 
    the local inversion theorem implies that $\epsilon \mapsto R(t,\epsilon)$ is a $\mathcal{C}^1$ diffeomorphism. 
    We can therefore use the fundamental sum rule Proposition~\ref{prop:fundamental_sum_rule} as all its hypotheses are verified. 
    We reach:
    \begin{align*}
        f_n &= \EE_\epsilon \Big[\Psi_0^{(\nu)}(r) + \alpha \Psi_\mathrm{out}\Big(\int_0^1q_\epsilon(r;t)\mathrm{d}t\Big) - \frac{\beta \delta r}{2} \int_0^1 q_\epsilon(r;t) \mathrm{d}t\Big] + \smallO_n(1), \\ 
        &= \EE_\epsilon\Big[f_\mathrm{RS}\Big(\int_0^1 q_\epsilon(r;t) \mathrm{d}t,r\Big)\Big] + \smallO_n(1), \\
        &\geq \inf_{q \in [0,Q_z]} f_\mathrm{RS}(q,r) + \smallO_n(1).
    \end{align*}
    Since this is true for all $r \geq 0$ we easily obtain the sought lower bound.
\end{proof}

\subsubsection{Upper bound}
We now prove the final upper bound, which will end the proof of Lemma~\ref{lemma:noniid_phi}.
\begin{proposition}[Upper bound]\label{prop:upper_bound}
    Under the assumptions of Theorem~\ref{thm:main_product_gaussian_rotinv}, the free entropy $f_n$ satisfies:
    \begin{align*}
        \limsup_{n \to \infty} f_n \leq \sup_{r \geq 0} \inf_{q \in [0,Q_z]} f_{\mathrm{RS}}(q,r).
    \end{align*}
\end{proposition}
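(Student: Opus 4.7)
The plan is to mirror the adaptive interpolation of Proposition~\ref{prop:lower_bound} but dualising the roles of the two parameters: rather than fixing $r$ constant and letting $q_\epsilon(t)$ evolve as the overlap self-consistency, I would drive $r_\epsilon(t)$ to the value dictated by the first-order condition $\partial_q f_{\mathrm{RS}}=0$, while $q_\epsilon(t)$ still tracks the overlap as required by Proposition~\ref{prop:fundamental_sum_rule}. Concretely, I define $R(t,\epsilon)=(R_1(t,\epsilon),R_2(t,\epsilon))$ as the unique solution of the coupled ODE $\partial_t R(t,\epsilon)=F_n(t,R(t,\epsilon))$ with $F_n$ the vector field introduced in Lemma~\ref{lemma:diffeomorphism} and initial condition $R(0,\epsilon)=\epsilon$. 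This encodes $q_\epsilon(t)=\mathrm{Tr}_\beta[\EE\langle Q^{(M)}\rangle_{n,t,\epsilon}]\in[0,Q_z]$ and $r_\epsilon(t)=\frac{2\alpha}{\beta\delta}\Psi_{\mathrm{out}}'(q_\epsilon(t))$, both of which remain bounded thanks to \ref{hyp:channel},\ref{hyp:convergence_spectrum_B} and the fact that $\Psi_{\mathrm{out}}\in\mathcal{C}^2([0,Q_z])$. Existence and uniqueness of the ODE follow from the parametric Cauchy-Lipschitz theorem, exactly as in the lower-bound construction.

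Lemma~\ref{lemma:diffeomorphism} together with the Liouville formula $J_{n,\epsilon}(t)=\exp(\int_0^t\mathrm{div}\,F_n(u,R(u,\epsilon))\mathrm{d}u)$ shows that the Jacobian of $\epsilon\mapsto R(t,\epsilon)$ is uniformly bounded below by a positive constant, so the interpolation $(q_\epsilon,r_\epsilon)$ is regular in the sense of Definition~\ref{def:regularity}. Proposition~\ref{prop:fundamental_sum_rule} then applies and gives
\begin{align*}
f_n=\EE_\epsilon\Big[\Psi_0^{(\nu)}\Big(\int_0^1 r_\epsilon(t)\,\mathrm{d}t\Big)+\alpha\Psi_{\mathrm{out}}\Big(\int_0^1 q_\epsilon(t)\,\mathrm{d}t\Big)-\frac{\beta\delta}{2}\int_0^1 q_\epsilon(t)r_\epsilon(t)\,\mathrm{d}t\Big]+\smallO_n(1).
\end{align*}
Since both $\Psi_0^{(\nu)}$ and $\Psi_{\mathrm{out}}$ are convex, Jensen's inequality pulls the time-averages inside and yields $f_n\leq\EE_\epsilon\big[\int_0^1 f_{\mathrm{RS}}(q_\epsilon(t),r_\epsilon(t))\,\mathrm{d}t\big]+\smallO_n(1)$.

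The crux is that the defining relation $r_\epsilon(t)=\frac{2\alpha}{\beta\delta}\Psi_{\mathrm{out}}'(q_\epsilon(t))$ is precisely the stationarity condition $\partial_q f_{\mathrm{RS}}(q_\epsilon(t),r_\epsilon(t))=0$, and since $q\mapsto f_{\mathrm{RS}}(q,r)$ is convex on $[0,Q_z]$ (as $\Psi_{\mathrm{out}}$ is), the point $q_\epsilon(t)$ is a global minimizer there:
\begin{align*}
f_{\mathrm{RS}}(q_\epsilon(t),r_\epsilon(t))=\inf_{q\in[0,Q_z]}f_{\mathrm{RS}}(q,r_\epsilon(t))\leq\sup_{r\geq 0}\inf_{q\in[0,Q_z]}f_{\mathrm{RS}}(q,r).
\end{align*}
Integrating over $t$ and $\epsilon$ and taking $\limsup_{n\to\infty}$ delivers the claim.

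The main obstacle I anticipate is the uniform strict positivity of the Jacobian of the coupled ODE: Lemma~\ref{lemma:diffeomorphism} only provides non-negativity of $\mathrm{div}\,F_n$ up to a $\smallO_n(1)$ error, so one must choose the vanishing sequence $s_n\to 0$ slowly enough that the perturbation $\epsilon_1\in\mathcal{D}_n^\beta$ dominates this error throughout the interpolation while still being compatible with the overlap-concentration estimate of Theorem~\ref{thm:concentration_overlap}. This is the same type of calibration that appears in the lower-bound proof and in the analogous arguments of \cite{barbier2019optimal,aubin2018committee}, and it should go through verbatim once the regularity of our specific $F_n$ is verified.
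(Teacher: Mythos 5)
Your proposal is correct and follows essentially the same route as the paper's own proof: the same ODE $\partial_t R=F_n(t,R)$ with $F_n$ from Lemma~\ref{lemma:diffeomorphism}, Cauchy--Lipschitz for existence and regularity, the Liouville formula to lower-bound the Jacobian, the fundamental sum rule, Jensen's inequality, and the observation that $r_\epsilon(t)=\tfrac{2\alpha}{\beta\delta}\Psi_{\mathrm{out}}'(q_\epsilon(t))$ makes $q_\epsilon(t)$ the global minimizer of the convex map $q\mapsto f_{\mathrm{RS}}(q,r_\epsilon(t))$. The calibration concern you raise at the end is handled in the paper exactly as you suspect: Lemma~\ref{lemma:diffeomorphism} only gives $\liminf$-type non-negativity, so the conclusion is $\liminf_n\inf_{t,\epsilon}J_{n,\epsilon}(t)\geq1$, which is enough for regularity (Definition~\ref{def:regularity}) for all $n$ large.
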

\begin{proof}[Proof of Proposition~\ref{prop:upper_bound}]
    We will choose $R(t,\epsilon) = (R_1(t,\epsilon),R_2(t,\epsilon))$ as the solution to the ordinary differential equation:
    \begin{align}\label{eq:ode_upper_bound}
      \partial_t R_1(t,\epsilon) &= \frac{2 \alpha}{\beta \delta} \Psi_\mathrm{out}\Big[\mathrm{Tr}_\beta[\EE \langle Q^{(M)}\rangle_{n,t,\epsilon}]\Big] \mathbbm{1}_\beta, &&\partial_t R_2(t,\epsilon) = \mathrm{Tr}_\beta[\EE \langle Q^{(M)}\rangle_{n,t,\epsilon}], 
    \end{align}
    with initial conditions $R(0,\epsilon) = (\epsilon_1,\epsilon_2)$. Let us denote this equation as $\partial_t R(t) = (F_{n,1}(t,R(t)),F_{n,2}(t,R(t)))$. 
     As in Section~\ref{subsubsec:lower_bound}, the parametric Cauchy-Lipschitz theorem implies the existence, unicity and $\mathcal{C}^1$ regularity of $R(t,\epsilon)$ as a function of $(t,\epsilon)$.
    We denote this unique solution\footnote{Notice in particular that the first equation of eq.~\eqref{eq:ode_upper_bound} implies that the derivative $\partial_t R_1(t,\epsilon)$ is always a diagonal matrix in $\mathcal{S}_\beta(\bbR)$.}
     as $R_1(t,\epsilon) = \epsilon_1 + (\int_0^t r_\epsilon(v) \mathrm{d}v) \mathbbm{1}_\beta$, $R_2(t,\epsilon) = \epsilon_2 + \int_0^t q_\epsilon(v) \mathrm{d}v$.
     Again, the Liouville formula yields that the Jacobian $J_{n,\epsilon}(t)$ of the map $\epsilon \mapsto R(t,\epsilon)$ is given by:
     \begin{align}
        J_{n,\epsilon}(t) &= \exp \Big(\int_0^t \Big\{\sum_{l=1}^\beta \frac{\partial (F_{n,1})_{ll}}{\partial (R_1)_{ll}}(s,R(s,\epsilon)) + \frac{\partial F_{n,2}}{\partial R_2}(s,R(s,\epsilon)) \Big\} \mathrm{d}s\Big).
     \end{align}
     Then, by Lemma~\ref{lemma:diffeomorphism}, we have that $\liminf_{n \to \infty} \inf_t \inf_\epsilon J_{n,\epsilon}(t) \geq 1$.
     In particular, this implies that $(q_\epsilon,r_\epsilon)$ are regular in the sense of Definition~\ref{def:regularity}.
     We have all that is needed to apply Proposition~\ref{prop:fundamental_sum_rule} and we reach:
    \begin{align*}
        f_n &= \EE_\epsilon \Big[\Psi_0^{(\nu)}\Big(\int_0^1 r_\epsilon(t) \mathrm{d}t\Big) + \alpha \Psi_\mathrm{out}\Big(\int_0^1q_\epsilon(t)\mathrm{d}t\Big) - \frac{\beta \delta}{2} \int_0^1 q_\epsilon(t) r_\epsilon(t) \mathrm{d}t\Big] + \smallO_n(1).
    \end{align*}
    Since $\Psi_{\mathrm{out}}$ and $\Psi_0^{(\nu)}$ are convex, Jensen's inequality implies:
    \begin{align*}
        f_n &\leq \EE_\epsilon \int_0^1 \mathrm{d}t \Big[\Psi_0^{(\nu)}(r_\epsilon(t)) + \alpha \Psi_\mathrm{out}(q_\epsilon(t)) - \frac{\beta \delta}{2} q_\epsilon(t) r_\epsilon(t) \Big] + \smallO_n(1), \\ 
        &\leq \EE_\epsilon \int_0^1 \mathrm{d}t f_\mathrm{RS}(q_\epsilon(t),r_\epsilon(t)) + \smallO_n(1)
    \end{align*}
    Note that we have 
    \begin{align*}
        f_\mathrm{RS}(q_\epsilon(t),r_\epsilon(t)) &= \inf_{q \in [0,Q_z]}f_\mathrm{RS}(q,r_\epsilon(t)).
    \end{align*}
    Indeed, the function $q \mapsto f_\mathrm{RS}(q,r_\epsilon(t))$ is convex, and its derivative is zero for $q = q_\epsilon(t)$ by definition of $(r_\epsilon,q_\epsilon)$, cf eq.~\eqref{eq:ode_upper_bound}. Therefore, we have:
    \begin{align*}
        f_n &\leq \EE_\epsilon \int_0^1 \mathrm{d}t \inf_{q \in [0,Q_z]} f_\mathrm{RS}(q,r_\epsilon(t)) + \smallO_n(1), \\ 
        &\leq \sup_{r \geq 0} \inf_{q \in [0,Q_z]} f_\mathrm{RS}(q,r_\epsilon(t)) + \smallO_n(1), 
    \end{align*}
    which ends the proof.
\end{proof}

\subsection{Proof of the MMSE limit}\label{subsec:app_proof_mmse}

As mentioned in the main part of this work, the MMSE statement in Conjecture~\ref{conjecture:replicas_general} is stated informally. The main reason is that obtaining the MMSE limit generically requires many technicalities, to account for the possible symmetries of the system, see e.g.\ Theorem~2 of \cite{barbier2019optimal} which performs such an analysis. 
To simplify the analysis, we ``break'' this symmetry by adding a side channel with an arbitrarily small signal-to-noise ratio.
Formally, we consider the following inference problem made of two channels:
\begin{subnumcases}{\label{eq:side_channel_mmse}}
    \label{eq:main_channel_Pout}
    Y_{t,\mu} \sim P_\mathrm{out}\Big(\cdot\Big|\frac{1}{\sqrt{n}} \sum_{i=1}^n \Phi_{\mu i} X^\star_{i}\Big)&$\mu = 1,\cdots,m$  \\
    \label{eq:side_channel}
    \tilde{\bY}_t = \sqrt{\Lambda} \bX^\star + \bZ', & $\bZ' \sim \mathcal{N}_\beta(0,\mathbbm{1}_n)$,
\end{subnumcases}
with $\Lambda > 0$ (arbitrarily small).
We can now state our precise statement on the MMSE:
\begin{proposition}\label{prop:mmse_precise}
    Consider the inference problem of eq.~\eqref{eq:side_channel_mmse}, under \ref{hyp:channel},\ref{hyp:prior},\ref{hyp:phi_product},\ref{hyp:convergence_spectrum_B}. We denote $\langle \cdot \rangle$ the average with respect to the posterior distribution of $\bx$ under the problem of eq.~\eqref{eq:side_channel_mmse}. The minimum mean squared error is achieved by the Bayes-optimal estimator $\hat{\bX}_\mathrm{opt} = \langle \bx \rangle$, and it satisfies as $n \to \infty$:
    \begin{align}
       \lim_{n \to \infty} \mathrm{MMSE} &= \lim_{n \to \infty} \frac{1}{n} \EE\lVert \bX^\star - \langle \bx \rangle \rVert^2 = 1 - q_x^\star,
    \end{align}
    with $q_x^\star$ the solution of the extremization problem in eq.~\eqref{eq:replica:freeen}, taking into account the additional side information of eq.~\eqref{eq:side_channel}.
\end{proposition}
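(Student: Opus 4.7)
\begin{proofof}{Proposition~\ref{prop:mmse_precise} (sketch)}
The plan is to reduce the MMSE statement to a derivative of the free entropy via the I-MMSE relation, and then invoke the free-entropy formula already proven in Lemma~\ref{lemma:noniid_phi} (with the side channel incorporated) to identify the limit with $1 - q_x^\star$. The role of the side channel of eq.~\eqref{eq:side_channel} is purely to break the symmetry of the problem, so that the argmax $q_x^\star$ of the replica potential is unique for almost every $\Lambda > 0$, thereby allowing a clean differentiation argument.

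First, I would extend the free entropy formula to the model with the side channel. Writing $f_n(\Lambda)$ for the free entropy of the joint system~\eqref{eq:side_channel_mmse}, a straightforward modification of the interpolation scheme of Section~\ref{subsec:app_interpolating_model}, treating the side channel as an additional Gaussian observation block with fixed SNR $\Lambda$, yields
\[
  \lim_{n \to \infty} f_n(\Lambda) \;=\; \sup_{r \geq 0} \inf_{q \in [0,Q_z]} \Big[ f_{\mathrm{RS}}(q,r) + \Psi_0^{(\Lambda)}(r) \Big],
\]
where $\Psi_0^{(\Lambda)}$ absorbs the extra Gaussian side-channel term of SNR $\Lambda$ into the prior contribution. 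The free-entropy concentration of Theorem~\ref{thm:concentration_free_entropy} and the upper/lower bounds of Propositions~\ref{prop:lower_bound},\ref{prop:upper_bound} both carry over with no substantive change, since the side channel is itself a Gaussian channel with bounded, $\mathcal{C}^\infty$ likelihood, compatible with \ref{hyp:channel}.

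Second, I would apply the I-MMSE theorem \cite{Guo_2005} to the side channel. For fixed realizations of everything else, the derivative with respect to $\Lambda$ of the mutual information $I(\bX^\star; \tilde{\bY}_t, \bY_t \mid \bPhi)$ equals $(\beta/2)$ times the MMSE of $\bX^\star$ under the joint observations. Translating to free entropies, $f_n(\Lambda)$ is convex and non-decreasing in $\Lambda$, with
\[
  \frac{d}{d\Lambda} f_n(\Lambda) \;=\; \frac{\beta}{2} \Big( 1 - \frac{1}{n} \mathrm{MMSE}_n(\Lambda) \Big) + o_n(1).
\]
On the limiting side, the envelope theorem applied to the finite-dimensional extremization problem gives that, at any point of differentiability of the limiting free entropy in $\Lambda$, the derivative equals $(\beta/2)(1 - q_x^\star(\Lambda))$, where $q_x^\star(\Lambda)$ is the unique maximizer (the uniqueness holding outside a countable set of $\Lambda$, by convexity).

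Third, I would conclude by a standard convexity-of-convex-functions argument: convex functions that converge pointwise converge uniformly on compacts together with their derivatives at every point of differentiability of the limit. This yields $\lim_{n \to \infty} n^{-1} \mathrm{MMSE}_n(\Lambda) = 1 - q_x^\star(\Lambda)$ for almost every $\Lambda > 0$, and hence at the specific $\Lambda$ characterizing the problem. The main obstacle, as in \cite{barbier2019optimal}, is the handling of the countable set of non-differentiability points, which correspond to possible first-order phase transitions in $\Lambda$; adding the infinitesimal side channel is precisely what allows one to bypass this issue by choosing $\Lambda$ to lie outside this set, while the original problem (with vanishing $\Lambda$) is then recovered in the limit by monotonicity of the MMSE in $\Lambda$ and by continuity of $q_x^\star(\Lambda)$ at points where the extremization problem has a unique maximizer.
\end{proofof}
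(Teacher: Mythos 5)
Your proposal follows essentially the same route as the paper: extend the free-entropy formula to the model augmented with the $\Lambda$-side channel via the same adaptive interpolation (this is the paper's Lemma~\ref{lemma:free_entropy_side_channel}), then apply the I-MMSE relation and identify $\partial_\Lambda f(\Lambda)$ with $\tfrac{\beta}{2} q_x^\star$ through the stationarity/envelope argument. Your explicit handling of the limit--derivative interchange by convexity and of the a.e.-$\Lambda$ differentiability issue is a point the paper passes over silently, and your normalization $\partial_\Lambda f_n = \tfrac{\beta}{2}\bigl(1-\tfrac{1}{n}\mathrm{MMSE}_n\bigr)+o_n(1)$ is the correct one, consistent with the final answer $1-q_x^\star$.
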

\begin{proof}[Proof of Proposition~\ref{prop:mmse_precise}]
    With the side channel added, this proposition will follow from an application of the classical I-MMSE theorem \cite{Guo_2005}. 
    We denote $\langle \cdot \rangle$ the mean under the posterior distribution of $\bx$ under the channels of eq.~\eqref{eq:side_channel_mmse}, and $\EE$ the average with respect to the ``quenched'' variables $\bPhi,\bZ',\bX^\star$. The free entropy $f_n(\Lambda)$ is defined as the average of the log-normalization of the posterior distribution:
    \begin{align*}
        f_n(\Lambda) &\equiv \frac{1}{n} \EE \ln \int_{\bbK^n} P_0(\mathrm{d}\bx) \Big[\prod_{\mu=1}^m P_\mathrm{out}\Big(Y_{t,\mu}\Big| \frac{1}{\sqrt{n}} \sum_{i=1}^n \Phi_{\mu i} x_i \Big) \Big] \frac{e^{-\frac{\beta}{2} \sum_{i=1}^n \big| \tilde{Y}_{t,i} - \sqrt{\Lambda} x_i \big|^2}}{(2\pi/\beta)^{n\beta/2}}.
    \end{align*}
    
    We can easily replicate the adaptive interpolation analysis of Theorem~\ref{thm:main_product_gaussian_rotinv} (see Section~\ref{sec:app_proof_theorem}) to this case, and we reach the following result for the asymptotic free entropy $f(\Lambda)$ of eq.~\eqref{eq:side_channel_mmse}:
    \begin{lemma}\label{lemma:free_entropy_side_channel}
        For all $\Lambda > 0$, we have $\lim_{n \to \infty} f_n(\Lambda) = f(\Lambda)$, given by:
        \begin{align}
            f(\Lambda) = \sup_{q_x \in [0,1]} \sup_{q_z \in [0,Q_z]} [I_0(q_x,\Lambda)+\alpha I_\mathrm{out}(q_z) + I_\mathrm{int}(q_x,q_z)], 
        \end{align}
        with $I_\mathrm{out},I_\mathrm{int}$ given in Conjecture~\ref{conjecture:replicas_general}, and:
        \begin{align*}
           I_0(q_x,\Lambda) \equiv \inf_{\hat{q}_x \geq 0} \Big[-\frac{\beta \hat{q}_x q_x}{2} +&\int_{\bbK^2} \mathcal{D}_\beta \xi \ \mathrm{d}\tilde{y} \int P_0(\mathrm{d}x) \frac{e^{-\frac{\beta \hat{q}_x}{2}|x|^2 + \beta \sqrt{\hat{q}_x} x \cdot \xi - \frac{\beta}{2} |\tilde{y} - \sqrt{\Lambda} x|^2}}{(2\pi/\beta)^{\beta/2}}  \\ 
           &\ln \int P_0(\mathrm{d}x) \frac{e^{-\frac{\beta \hat{q}_x}{2}|x|^2 + \beta \sqrt{\hat{q}_x} x \cdot \xi - \frac{\beta}{2} |\tilde{y} - \sqrt{\Lambda} x|^2}}{(2\pi/\beta)^{\beta/2}} \Big].
        \end{align*}
    \end{lemma}
    \begin{proof}[Proof of Lemma~\ref{lemma:free_entropy_side_channel}]
    By Proposition~\ref{prop:replicas_product_structure}, one can simply replicate the adaptive interpolation analysis of Section~\ref{sec:app_proof_theorem} to this model, and this will prove the required formula. The precise form of $I_0(q_x)$ is very easy to compute.
    \end{proof}
    We can then use the I-MMSE formula \cite{Guo_2005}, that yields that for any $\Lambda$, 
    \begin{align}
        \lim_{n \to \infty} \mathrm{MMSE} = -\frac{2}{\beta}\partial_\Lambda f(\Lambda). 
    \end{align}
    Moreover, by Lemma~\ref{lemma:free_entropy_side_channel}, $q_x^\star,\hat{q}_x^\star$ is a solution of the equation: 
    \begin{align}\label{eq:se_qx}
       q_x^\star = \frac{1}{(2\pi/\beta)^{\beta/2}} \int \mathcal{D}_\beta \xi \mathrm{d}\tilde{y} \frac{\Big| \int P_0(\mathrm{d}x) \ x \ e^{-\frac{\beta \hat{q}_x^\star}{2}|x|^2 + \beta \sqrt{\hat{q}_x^\star} x \cdot \xi - \frac{\beta}{2} |\tilde{y} - \sqrt{\Lambda} x|^2} \Big|^2}{\int P_0(\mathrm{d}x)e^{-\frac{\beta \hat{q}_x^\star}{2}|x|^2 + \beta \sqrt{\hat{q}_x^\star} x \cdot \xi - \frac{\beta}{2} |\tilde{y} - \sqrt{\Lambda} x|^2} }.
    \end{align}
    From the expression of $I_0$ in Lemma~\ref{lemma:free_entropy_side_channel} and eq.~\eqref{eq:se_qx}, it is then a straightforward calculation to see that 
    \begin{align*}
        - (2/\beta)\partial_\Lambda f(\Lambda) &= 1-q_x^\star,
    \end{align*}
    which ends the proof.
\end{proof}

\subsection{Proof of Theorem~\ref{thm:main_product_gaussian_rotinv}: the Gaussian matrix case}\label{subsec:app_gaussian_matrix_proof}

In this subsection, we place ourselves under \ref{hyp:channel},\ref{hyp:any_p0_gaussian_phi} and sketch how the proof performed in the previous sections directly extends under these hypotheses. Note that here $\langle \lambda \rangle_\nu = \alpha$, so $Q_z = Q_x = \rho$.
First, we can state a very similar result to Proposition~\ref{prop:replicas_product_structure}, simplifying Conjecture~\ref{conjecture:replicas_general} in this setting:
\begin{proposition}\label{prop:replicas_gaussian_phi}
    Under \ref{hyp:channel},\ref{hyp:any_p0_gaussian_phi}, the replica conjecture~\ref{conjecture:replicas_general} reduces to:
    \begin{align*}
       \lim_{n \to \infty} \frac{1}{n} \EE \ln \mathcal{Z}_n(\bY) &= \sup_{\hat{q} \geq 0} \inf_{q \in [0,\rho]} \Big[- \frac{\beta q \hat{q}}{2} +\Psi_{P_0}(\hat{q})+ \alpha \Psi_\mathrm{out}(q)\Big].
    \end{align*}
    with $q_z,\Psi_\mathrm{out}$ defined in Proposition~\ref{prop:replicas_product_structure}, and $\Psi_{P_0}(\hat{q})$ defined for $\hat{q} \geq 0$ by:
    \begin{align*}
       \Psi_{P_0}(\hat{q}) &\equiv \EE_\xi \mathcal{Z}_0(\sqrt{\hat{q}}\xi,\hat{q}) \ln \mathcal{Z}_0(\sqrt{\hat{q}}\xi,\hat{q}),
    \end{align*}
    with $\mathcal{Z}_0$ defined in eq.~\eqref{eq:auxiliary}.
\end{proposition}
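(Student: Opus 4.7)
The plan is to establish Proposition~\ref{prop:replicas_gaussian_phi} as a purely algebraic reduction of the variational expression in Conjecture~\ref{conjecture:replicas_general} to the claimed simpler form, exploiting the fact that under \ref{hyp:any_p0_gaussian_phi} the limiting spectral measure $\nu$ of $\bPhi^\dagger\bPhi/n$ is the Marchenko--Pastur law with parameter $\alpha$. Nothing needs to be redone at the level of probability or interpolation; one just checks that the generic variational formula collapses to the i.i.d.\ Gaussian-$\bPhi$ form, irrespective of the prior. Importantly, the $I_0$ term in Conjecture~\ref{conjecture:replicas_general} is already in the shape $\inf_{\hat{q}_x}[-\beta\hat{q}_x q_x/2+\Psi_{P_0}(\hat{q}_x)]$, so the prior-dependent bookkeeping is trivial and all of the work is channelled into showing that $\alpha I_\mathrm{out}(q_z)+I_\mathrm{int}(q_x,q_z)$ collapses to $\alpha\Psi_\mathrm{out}(q)$ after the correct identification of conjugate variables.

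I would proceed in three steps. First, specialize the spectral integrals appearing in $I_\mathrm{int}$: Marchenko--Pastur gives $\langle\lambda\rangle_\nu=\alpha$, hence $Q_z=\rho$ and $\hat{Q}_z=\rho^{-1}$, together with an explicit Stieltjes-transform identity that expresses both $\langle(\rho^{-1}+\gamma_x+\lambda\gamma_z)^{-1}\rangle_\nu$ and $\langle\lambda(\rho^{-1}+\gamma_x+\lambda\gamma_z)^{-1}\rangle_\nu$ as rational functions of a single scalar determined by the MP fixed-point equation. Second, solve the inner infimum over $(\gamma_x,\gamma_z)$ in $I_\mathrm{int}$: the Marchenko--Pastur identity couples the two stationarity conditions
\[
\rho-q_x=\Big\langle\tfrac{1}{\rho^{-1}+\gamma_x+\lambda\gamma_z}\Big\rangle_\nu,\qquad \alpha(\rho-q_z)=\Big\langle\tfrac{\lambda}{\rho^{-1}+\gamma_x+\lambda\gamma_z}\Big\rangle_\nu
\]
in such a way that, at the joint saddle, one is forced to $q_z=q_x\equiv q$ with $\gamma_x=0$ and a nontrivial $\gamma_z$ expressible in closed form in $q$. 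Third, combine the simplified $I_\mathrm{int}$ with $\alpha I_\mathrm{out}(q_z)$ after performing the $\inf_{\hat{q}_z}$: the five logarithmic contributions (the two $\ln(\rho-q_x)$ and $\ln(\rho-q_z)$ terms inside $I_\mathrm{int}$, the $-\tfrac{\beta}{2}\ln(\hat{Q}_z+\hat{q}_z)$ term inside $I_\mathrm{out}$, and the spectral $\langle\ln(\rho^{-1}+\gamma_x+\lambda\gamma_z)\rangle_\nu$) must cancel exactly, leaving $\alpha I_\mathrm{out}(q_z)+I_\mathrm{int}(q_x,q_z)\rightsquigarrow\alpha\Psi_\mathrm{out}(q)-\tfrac{\beta\hat{q}q}{2}$ after identifying $\hat{q}$ with the appropriately rescaled stationary value arising from the $\inf_{\hat{q}_z}$. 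Adding $I_0$ and exchanging the resulting $\sup_{q_x}\inf_{\hat{q}_x}$ into $\sup_{\hat{q}}\inf_{q}$ via Sion's minimax theorem (justified by concavity in $\hat{q}$ and convexity in $q$ of the Bayes-optimal scalar potential, standard for $\Psi_{P_0}$ and $\Psi_\mathrm{out}$) produces the target expression.

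The hard part is the algebraic bookkeeping in the second and third steps. In particular, one must verify the MP Stieltjes identity that glues $(\gamma_x,\gamma_z)$ to a single scalar and check that the stationary value of $\hat{q}_z$ matches, up to the right rescaling, the value of $\gamma_z$ forced by $I_\mathrm{int}$, so that the effective noise variance in $\Psi_\mathrm{out}$ comes out as $Q_z-q=\rho-q$ rather than as some skewed combination. A second subtlety is justifying rigorously the sup-inf exchange: while standard for Bayes-optimal scalar potentials, it requires a careful statement of convexity/concavity on the relevant compact interval $[0,\rho]\times[0,\infty)$ together with the continuity of the integrands, which I would handle by the same arguments used in \cite{barbier2019optimal} for the Gaussian-matrix case.
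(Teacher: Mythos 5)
Your overall strategy (specialize $\nu$ to Marchenko--Pastur, solve the inner infima in closed form, then invert sup's and inf's) is the same as the paper's, which proves Proposition~\ref{prop:replicas_gaussian_phi} by mimicking the proof of Proposition~\ref{prop:replicas_product_structure}: it first collapses $I_\mathrm{int}$ to an explicit rational-plus-logarithmic function of $(q_x,q_z)$ using the MP law, then solves the extremization over $(q_z,\hat q_z)$ and shows $\alpha I_\mathrm{out}(q_z)+I_\mathrm{int}(q_x,q_z)=\alpha\Psi_\mathrm{out}(q_x)$, the sup--inf inversions being handled by Lemma~\ref{lemma:supinf_barbier} rather than Sion's theorem. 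However, your description of the saddle in Step~2 is wrong, and it is not a cosmetic error. The inner infimum over $(\gamma_x,\gamma_z)$ is taken at \emph{fixed} $(q_x,q_z)$, so it cannot ``force'' $q_z=q_x$; and at the full stationary point the paper finds
\begin{align*}
  q_z \;=\; q_x+\tfrac{2}{\beta}(\rho-q_x)^2\,\Psi_\mathrm{out}'(q_x),\qquad \hat q_z=\tfrac{q_x}{\rho(\rho-q_x)},
\end{align*}
so $q_z\neq q_x$ whenever $\Psi_\mathrm{out}'(q_x)>0$. The reason the channel term nonetheless ends up evaluated at $q_x$ is not that $q_z=q_x$, but that the effective SNR entering $\mathcal{Z}_\mathrm{out}$ is $q(\hat q_z)=\rho^2\hat q_z/(1+\rho\hat q_z)$, which equals $q_x$ at the stationary $\hat q_z$ above. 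If you carry out your cancellation of the five logarithmic terms under the assumption $q_z=q_x$ (and $\gamma_x=0$), the bookkeeping will not close.

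A second, related accounting error: you claim $\alpha I_\mathrm{out}(q_z)+I_\mathrm{int}(q_x,q_z)$ reduces to $\alpha\Psi_\mathrm{out}(q)-\tfrac{\beta\hat q q}{2}$ and that the bilinear term is then supplied again by ``adding $I_0$''. In fact the channel-plus-interaction block reduces to exactly $\alpha\Psi_\mathrm{out}(q_x)$ with \emph{no} residual bilinear term; the single $-\tfrac{\beta q\hat q}{2}$ in the final formula comes entirely from $I_0(q_x)=\inf_{\hat q_x\ge 0}[-\tfrac{\beta\hat q_x q_x}{2}+\Psi_{P_0}(\hat q_x)]$, after which one identifies $q=q_x$, $\hat q=\hat q_x$ and exchanges the order of $\sup$ and $\inf$. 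As written, your scheme would produce two conjugate pairs where the statement has one. You correctly flag that matching the stationary $\hat q_z$ to the effective noise variance $\rho-q$ is the delicate point, but the resolution you propose is not the right one.
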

\begin{proof}[Proof of Proposition~\ref{prop:replicas_gaussian_phi}]
   The proof follows similar lines to the proof of Proposition~\ref{prop:replicas_product_structure}, see Section~\ref{sec:app_equivalence_replicas}. Let us briefly sketch the main steps. Since $\bPhi$ is Gaussian, $\nu$ is the Marchenko-Pastur distribution \cite{marchenko1967distribution}, and one can easily simplify $I_\mathrm{int}(q_x,q_z)$ as:
   \begin{align*}
      I_\mathrm{int}(q_x,q_z) &= -\frac{\alpha \beta}{2}\Big[\frac{q_x (\rho-q_z)}{2 \rho(\rho-q_x)} + \ln(\rho-q_x) \Big]. 
   \end{align*}
   Using then the exact same sup-inf inversion arguments as in Section~\ref{sec:app_equivalence_replicas}, the supremum and infimum over $q_z$ and $\hat{q}_z$ are solved by:
   \begin{subnumcases}{}
      q_z = q_x + \frac{2}{\beta}(\rho-q_x)^2 \Psi_\mathrm{out}'(q_x), &\\ 
      \hat{q}_z = \frac{q_x}{\rho(\rho-q_x)} &.
   \end{subnumcases}
   And finally, we reach that (with the notations of Conjecture~\ref{conjecture:replicas_general}) $\alpha I_\mathrm{out}(q_z) + I_\mathrm{int}(q_x,q_z) = \alpha \Psi_\mathrm{out}(q_x)$. Posing $q = q_x,\hat{q} = \hat{q}_x$ finishes the proof.
\end{proof}
We turn now to proving the formula of Proposition~\ref{prop:replicas_gaussian_phi}. The proof goes exactly as in the previous sections of Section~\ref{sec:app_proof_theorem}, by considering instead of eq.~\eqref{eq:auxiliary_channels} the interpolation problem:
\begin{subnumcases}{\label{eq:auxiliary_channels_gaussian_phi}}
    \Big\{Y_{t,\mu} \sim P_\mathrm{out}\Big(\cdot\Big| \sqrt{\frac{1-t}{p}} [\bPhi \bX^\star]_\mu + \sqrt{R_2(t,\epsilon)} V_\mu + \sqrt{\rho t - R_2(t,\epsilon) + 2 s_n} A_\mu^\star\Big)\Big\}_{\mu=1}^m & \  \\
    \tilde{\bY}_t = (R_1(t,\epsilon))^{1/2} \star \bX^\star + \bzeta, &
\end{subnumcases}
where $V_\mu,A^\star_\mu \overset{\textrm{i.i.d.}}{\sim} \mathcal{N}_\beta(0,1)$, and $\bzeta \sim \mathcal{N}_\beta(0,\mathbbm{1}_n)$.
The prior distribution on $\bX^\star$ is $P_0$. 
The rest of the proof is then a trivial verbatim of Sections~\ref{subsec:app_interpolating_model} to \ref{subsec:app_proof_mmse}.
\section{Proof of Proposition~\ref{prop:replicas_product_structure}}\label{sec:app_equivalence_replicas}

In this section, we prove Proposition~\ref{prop:replicas_product_structure}: we start from Conjecture~\ref{conjecture:replicas_general} and derive eq.~\eqref{eq:Phi_noniid}.
Note that by \ref{hyp:phi_product} we have $\langle\lambda\rangle_\nu = \alpha \EE_{\nu_B}[X] / \delta$. 
We begin by recalling some sup-inf formulas, before turning to the actual proof.

\subsection{Some sup-inf formulas}

We recall Corollary~8 of \cite{barbier2019optimal}, stated here as a lemma:
\begin{lemma}[\cite{barbier2019optimal}]\label{lemma:supinf_barbier}
    Let $f : \bbR_+ \to \bbR$ be a $\mathcal{C}^1$ convex, non-decreasing, Lipschitz function. Define $\rho \equiv ||f'||_\infty$. Let $g : [0,\rho] \to \bbR$ be a convex, non-decreasing, Lipschitz function. For $(q_1,q_2) \in \bbR_+ \times [0,\rho]$ we define $\psi(q_1,q_2) \equiv f(q_1) + g(q_2) - q_1 q_2$. Then:
    \begin{align*}
        \sup_{q_1 \geq 0} \inf_{q_2 \in [0,\rho]} \psi(q_1,q_2) &= \sup_{q_2 \in [0,\rho]} \inf_{q_1 \geq 0} \psi(q_1,q_2).
    \end{align*}
\end{lemma}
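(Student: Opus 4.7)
The plan is to reduce both sup-inf problems to one-dimensional maximization problems via Legendre--Fenchel duality, then exhibit a common optimizer realizing both values.

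First, I would perform the inner infima explicitly. For any $q_1 \geq 0$, the map $q_2 \mapsto g(q_2) - q_1 q_2$ is convex and continuous on the compact interval $[0,\rho]$, so its infimum is $-g^\star(q_1)$ with $g^\star(q_1) \equiv \sup_{q_2 \in [0,\rho]}[q_1 q_2 - g(q_2)]$ the (restricted) convex conjugate of $g$. Symmetrically, $\inf_{q_1 \geq 0}[f(q_1) - q_1 q_2] = -f^\star(q_2)$ with $f^\star(q_2) \equiv \sup_{q_1 \geq 0}[q_1 q_2 - f(q_1)]$. The claim thus becomes
\begin{equation*}
V_1 \equiv \sup_{q_1 \geq 0}\bigl[f(q_1) - g^\star(q_1)\bigr] \; = \; \sup_{q_2 \in [0,\rho]}\bigl[g(q_2) - f^\star(q_2)\bigr] \equiv V_2.
\end{equation*}

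Next I would prove $V_2 \geq V_1$ by exhibiting, at a maximizer $q_1^\star$ of $V_1$, a companion point $q_2^\star \in [0,\rho]$ for which $\psi(q_1^\star,q_2^\star) = V_1$ and $q_1^\star$ minimizes $\psi(\cdot,q_2^\star)$. Existence of the maximizer follows from continuity of $h_1(q_1) \equiv f(q_1) - g^\star(q_1)$ together with the observation that $g^\star(q_1) = \rho q_1 - g(\rho)$ once $q_1$ exceeds the right-derivative of $g$ at $\rho$, so $h_1(q_1) = f(q_1) - \rho q_1 + g(\rho)$ becomes non-increasing in that regime (since $\|f'\|_\infty = \rho$); hence the supremum is attained on a compact interval. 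At $q_1^\star$ the first-order condition gives $f'(q_1^\star) \in \partial g^\star(q_1^\star)$; since $\|f'\|_\infty = \rho$, setting $q_2^\star \equiv f'(q_1^\star) \in [0,\rho]$ and invoking Fenchel--Young equality yields $g^\star(q_1^\star) + g(q_2^\star) = q_1^\star q_2^\star$, and hence $V_1 = f(q_1^\star) + g(q_2^\star) - q_1^\star q_2^\star = \psi(q_1^\star,q_2^\star)$. Since the convex function $q_1 \mapsto \psi(q_1,q_2^\star)$ has derivative $f'(q_1)-q_2^\star$ vanishing at $q_1^\star \geq 0$, we obtain $\inf_{q_1 \geq 0}\psi(q_1,q_2^\star) = \psi(q_1^\star,q_2^\star)$, and taking a supremum over $q_2$ gives $V_2 \geq V_1$.

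The converse inequality $V_1 \geq V_2$ would be obtained by the symmetric construction: start from a maximizer $q_2^\star$ of $V_2$, pick $q_1^\star \in \partial g(q_2^\star) \cap \bbR_+$ (non-empty because $g$ is non-decreasing and convex on $[0,\rho]$ so $\partial g$ takes values in $\bbR_+$), and check via Fenchel--Young that $f^\star(q_2^\star) = q_1^\star q_2^\star - f(q_1^\star)$ and that $q_2^\star$ minimizes $\psi(q_1^\star,\cdot)$ on $[0,\rho]$.

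The main obstacle will be the non-smoothness of $g$ and the possibility of boundary optimizers. The $\mathcal{C}^1$ hypothesis on $f$ is essential for the direction $V_2 \geq V_1$ (to obtain a single value $q_2^\star = f'(q_1^\star)$ rather than a subdifferential), while for the reverse direction one must work with the subdifferential $\partial g$ and separately treat the edge cases $q_2^\star \in \{0,\rho\}$, where the first-order condition reduces to an inequality; the Fenchel--Young identity remains the key tool throughout. The Lipschitz/monotonicity assumptions are precisely what make the boundary cases compatible, via the bound $q_2^\star \in [0,\rho]$ obtained from $\|f'\|_\infty = \rho$.
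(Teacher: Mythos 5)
The paper does not prove this lemma; it states it as Corollary~8 of \cite{barbier2019optimal} and defers to that reference. I therefore cannot compare your argument against ``the paper's own proof'' and instead assess it on its merits.

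Your Legendre-conjugate reduction and the direction $V_2 \geq V_1$ are correct, including at the boundary: if the maximizer of $h_1 = f - g^\star$ lands at $q_1^\star = 0$, the one-sided stationarity condition $f'(0) \leq (g^\star)'_+(0)$ combined with $f'(0) \geq 0$ (monotonicity of $f$) still places $f'(0)$ inside $\partial g^\star(0) = [0,(g^\star)'_+(0)]$, so the Fenchel--Young equality you invoke is legitimate. Your existence argument for the maximizer (eventual affineness of $g^\star$ with slope $\rho$) is also sound.

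There is, however, a real gap in the reverse direction as written. You select $q_1^\star \in \partial g(q_2^\star) \cap \bbR_+$, which correctly guarantees that $q_2^\star$ minimizes $\psi(q_1^\star,\cdot)$, and then you assert that the Fenchel--Young identity $f^\star(q_2^\star) = q_1^\star q_2^\star - f(q_1^\star)$ can simply be ``checked.'' But that identity is equivalent to $q_1^\star \in \partial f^\star(q_2^\star)$, which is a statement about $f$, not $g$, and does not follow from $q_1^\star \in \partial g(q_2^\star)$ alone. The missing step is the stationarity condition at the maximizer $q_2^\star$ of $h_2 = g - f^\star$: for interior $q_2^\star$ the two one-sided conditions $g'_+(q_2^\star) \leq (f^\star)'_+(q_2^\star)$ and $g'_-(q_2^\star) \geq (f^\star)'_-(q_2^\star)$ give the inclusion $\partial g(q_2^\star) \subseteq \partial f^\star(q_2^\star)$, which is exactly what makes your chosen $q_1^\star$ satisfy both required identities simultaneously. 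The boundary cases $q_2^\star \in \{0,\rho\}$ then follow from the corresponding one-sided versions of this inclusion (noting, for instance, that $q_2^\star = \rho$ cannot occur when $\partial f^\star(\rho) = \emptyset$, since then $(f^\star)'_-(\rho) = +\infty$ forces $h_2$ to be strictly decreasing at $\rho$). You gesture at first-order conditions and edge cases in your closing paragraph, but the inclusion $\partial g(q_2^\star) \subseteq \partial f^\star(q_2^\star)$ is the load-bearing fact and should be stated and derived explicitly; without it the reverse inequality is not established.
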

We can state a corollary for functions of two variables.
\begin{corollary}\label{corollary:supinf_two_variables}
    Let $f : \bbR_+^2 \to \bbR$ be a $\mathcal{C}^1$ convex, Lipschitz function which is nondecreasing in each of its variables. Define $\rho_1 \equiv ||\partial_1 f||_\infty,\rho_2 \equiv ||\partial_2 f||_\infty$. Let $g : [0,\rho_1] \to \bbR$, $g_2 : [0,\rho_2] \to \bbR$ be two convex, non-decreasing, Lipschitz functions. For $(x_1,x_2,y_1,y_2) \in \bbR_+^2 \times [0,\rho_1] \times [0,\rho_2]$ we define $\psi(x_1,x_2,y_1,y_2) \equiv f(x_1,x_2) + g_1(y_1) + g_2(y_2) - x_1 y_1 - x_2 y_2$. Then:
    \begin{align*}
        \sup_{x_1,x_2 \geq 0} \inf_{y_1,y_2 \in [0,\rho_1]\times[0,\rho_2]} \psi(x_1,x_2,y_1,y_2) &= 
       \sup_{y_1,y_2 \in [0,\rho_1]\times[0,\rho_2]} \inf_{x_1,x_2 \geq 0} \psi(x_1,x_2,y_1,y_2).
    \end{align*}
\end{corollary}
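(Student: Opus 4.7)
The strategy is to derive the two-variable minimax identity by iterating the one-variable identity of Lemma~\ref{lemma:supinf_barbier}. The starting observation is that $\psi$ splits additively as $\psi = A(x_1, x_2, y_1) + B(x_2, y_2)$, with $A := f(x_1, x_2) + g_1(y_1) - x_1 y_1$ and $B := g_2(y_2) - x_2 y_2$. This separability of the $y$-couplings will allow two successive applications of the one-variable identity, while the $x$-coupling remains encoded through the joint convex function $f$.

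\textbf{First swap.} For each fixed $(x_2, y_2)$, the function $(x_1, y_1) \mapsto A(x_1, x_2, y_1)$ satisfies the hypotheses of Lemma~\ref{lemma:supinf_barbier} uniformly in $x_2$, because $x_1 \mapsto f(x_1, x_2)$ inherits convexity, $\mathcal{C}^1$ regularity, monotonicity, and the Lipschitz bound $\rho_1$ from $f$. Using separability to commute the $\inf_{y_2}$ past the $(x_1, y_1)$ operations and then invoking the one-variable lemma yields
\begin{align*}
    \text{LHS} = \sup_{x_2 \geq 0} \inf_{y_2 \in [0, \rho_2]}\Big\{\sup_{y_1 \in [0, \rho_1]} \inf_{x_1 \geq 0} A(x_1, x_2, y_1) + B(x_2, y_2)\Big\}.
\end{align*}
An explicit minimization in $x_1$ produces $\inf_{x_1 \geq 0} A = g_1(y_1) + \phi(x_2, y_1)$ with $\phi(x_2, y_1) := \inf_{x_1 \geq 0}[f(x_1, x_2) - x_1 y_1]$.

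\textbf{Second swap.} Since $\sup_{x_2}$ and $\sup_{y_1}$ are both suprema they commute freely; this reduces the problem to a one-parameter family (indexed by $y_1$) of one-dimensional sup-inf statements of exactly the form covered by Lemma~\ref{lemma:supinf_barbier}:
\begin{align*}
    \sup_{x_2 \geq 0} \inf_{y_2 \in [0, \rho_2]}\big[\phi(x_2, y_1) + g_2(y_2) - x_2 y_2\big] = \sup_{y_2 \in [0, \rho_2]} \inf_{x_2 \geq 0}\big[\phi(x_2, y_1) + g_2(y_2) - x_2 y_2\big].
\end{align*}
Here $\phi(\cdot, y_1)$, being the partial infimum of the jointly convex $f$, is convex, non-decreasing, and $\rho_2$-Lipschitz in $x_2$. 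Unwinding the definition gives $\sup_{x_2 \geq 0}[x_2 y_2 - \phi(x_2, y_1)] = f^{\ast}(y_1, y_2) := \sup_{x_1, x_2 \geq 0}[x_1 y_1 + x_2 y_2 - f(x_1, x_2)]$, and combining yields
\begin{align*}
    \text{LHS} = \sup_{y_1, y_2}\big[g_1(y_1) + g_2(y_2) - f^{\ast}(y_1, y_2)\big] = \text{RHS}.
\end{align*}

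\textbf{Main obstacle.} The principal technicality is that $\phi(\cdot, y_1)$ is convex and Lipschitz but not \emph{a priori} of class $\mathcal{C}^1$, whereas Lemma~\ref{lemma:supinf_barbier} is stated under that smoothness hypothesis. I would bridge this gap by a mollification argument: replace $f$ by a $\mathcal{C}^1$ strictly convex perturbation $f_\eta$ (for instance $f + \eta \lVert x \rVert^2$ convolved against a smooth kernel), carry out the above argument at level $\eta$, and pass to the limit $\eta \downarrow 0$ using uniform Lipschitz and convexity bounds on all intermediate objects. Alternatively, one may strengthen the one-variable identity itself to merely convex Lipschitz functions via Moreau-Yosida regularization, which is the natural setting for the underlying Fenchel-Rockafellar duality invoked implicitly here.
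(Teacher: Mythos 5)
Your approach is a legitimate alternative, but it differs in strategy from the paper's proof. The paper claims the two-variable identity follows by running the proof of Corollary~8 of \cite{barbier2019optimal} (our Lemma~\ref{lemma:supinf_barbier}) \emph{verbatim} in two dimensions, exploiting the separate Lipschitz bounds $\rho_1, \rho_2$ of $f$ in its two coordinates; this amounts to replacing one-dimensional Legendre transforms with their two-dimensional counterparts in a Fenchel--Rockafellar style duality. You instead iterate the one-dimensional lemma twice, using the additive separability of the $y$-couplings to decompose $\psi$ into the pieces $A$ and $B$. Your route is more modular (it treats the one-variable result as a black box), at the cost of having to verify that the intermediate object $\phi(\cdot,y_1) = \inf_{x_1}[f(x_1,\cdot)-x_1y_1]$ again satisfies the hypotheses of the lemma.

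On your stated obstacle: the $\mathcal{C}^1$ concern for $\phi$ is real but can be resolved without mollification. By the envelope argument, if the infimum defining $\phi(x_2,y_1)$ is attained at $x_1^\ast$, then for all $x_2'$ one has $\phi(x_2',y_1)-\phi(x_2,y_1)\leq f(x_1^\ast,x_2')-f(x_1^\ast,x_2)$, which squeezes the left and right derivatives of the convex function $\phi(\cdot,y_1)$ onto $\partial_2 f(x_1^\ast,x_2)$ and hence forces differentiability; a differentiable convex function on an open interval is automatically $\mathcal{C}^1$ (Rockafellar, Thm.\ 25.5). The only genuine delicacy lies elsewhere: the infimum may fail to be attained, indeed $\phi(x_2,y_1)$ can be $-\infty$ whenever $y_1$ exceeds the asymptotic slope $\lim_{x_1\to\infty}\partial_1 f(x_1,x_2)$ (which can be strictly smaller than $\rho_1$ for fixed $x_2$), and similarly the effective Lipschitz constant $\lVert\partial_{x_2}\phi(\cdot,y_1)\rVert_\infty$ may be strictly smaller than $\rho_2$ so that $g_2$ is defined on a larger interval than Lemma~\ref{lemma:supinf_barbier} nominally requires. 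These edge cases are benign (the offending values of $y_1,y_2$ drop out of the suprema on both sides of the identity) but they deserve a sentence in a careful write-up; note that the one-dimensional Lemma~\ref{lemma:supinf_barbier} already carries an analogous implicit convention. Your mollification fallback is sound but heavier than necessary.
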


\begin{proof}[Proof of Corollary~\ref{corollary:supinf_two_variables}]
The proof is a verbatim of the proof of Corollary~8 in \cite{barbier2019optimal}, using that at fixed $y$, $x \mapsto f(x,y)$ is $\rho_1$-Lipschitz, while at fixed $x$, $y \mapsto f(x,y)$ is $\rho_2$-Lipschitz.
\end{proof}

\subsection{Core of the proof}

We now turn to the proof of Proposition~\ref{prop:replicas_product_structure}.
We begin by simplifying the free entropy potential using the Gaussian prior. We start from Conjecture~\ref{conjecture:replicas_general}.
Since $P_0$ is Gaussian by \ref{hyp:prior}, we can easily simplify the prior term $I_0$ as:
\begin{align*}
   I_0(q_x) &= \inf_{\hat{q}_x \geq 0} \Big[\frac{\beta \hat{q}_x (1-q_x)}{2} - \frac{\beta}{2} \ln(1+\hat{q}_x)\Big] = \frac{\beta q_x}{2} + \frac{\beta}{2} \ln(1-q_x).
\end{align*}
We now turn to the term $I_\mathrm{int}(q_x,q_z)$.
We can write it as:
\begin{align}\label{eq:Iint_gaussian_prior}
    I_\mathrm{int}(q_x,q_z) &= \inf_{\gamma_x,\gamma_z \geq 0} \Big[\frac{\beta}{2} (1 - q_x) \gamma_x + \frac{\alpha \beta}{2} (Q_z - q_z) \gamma_z - \frac{\beta}{2} \langle \ln (1 + \gamma_x + \lambda \gamma_z) \rangle_\nu \Big] \\
    & \hspace{1cm} - \frac{\beta}{2} \ln(1-q_x) - \frac{\beta q_x}{2} - \frac{\alpha \beta}{2} \ln(Q_z - q_z) - \frac{\alpha \beta q_z}{2 Q_z}. \nonumber
\end{align}
So we have, using Corollary~\ref{corollary:supinf_two_variables}, that if $f \equiv \sup_{q_x \in [0,1]} \sup_{q_z \in [0,Q_z]} [I_0(q_x) + \alpha I_\mathrm{out}(q_z) + I_\mathrm{int}(q_x,q_z)]$ is the conjectured limit of the free entropy:
\begin{align}
f &= \sup_{q_x \in [0,1]} \sup_{q_z \in [0,Q_z]} \inf_{\gamma_x,\gamma_z \geq 0} \Big[\alpha I_\mathrm{out}(q_z) + 
\frac{\beta}{2} (1 - q_x) \gamma_x + \frac{\alpha \beta}{2} (Q_z - q_z) \gamma_z \nonumber\\
&\hspace{2cm}- \frac{\beta}{2} \langle \ln (1 + \gamma_x + \lambda \gamma_z) \rangle_\nu - \frac{\alpha \beta}{2} \ln(Q_z - q_z) - \frac{\alpha \beta q_z}{2 Q_z}\Big], \nonumber \\
\label{eq:f_supinf}
&=\sup_{\gamma_x,\gamma_z \geq 0} \inf_{q_z \in [0,Q_z]} \inf_{q_x \in [0,1]}  \Big[\alpha I_\mathrm{out}(q_z) + 
\frac{\beta}{2} (1 - q_x) \gamma_x + \frac{\alpha \beta}{2} (Q_z - q_z) \gamma_z \\
&\hspace{2cm}- \frac{\beta}{2} \langle \ln (1 + \gamma_x + \lambda \gamma_z) \rangle_\nu - \frac{\alpha \beta}{2} \ln(Q_z - q_z) - \frac{\alpha \beta q_z}{2 Q_z}\Big]. \nonumber
\end{align}
The infimum on $q_x$ is very easily solved, as we have $\inf_{q_x \in [0,1]}[-\beta q_x \gamma_x / 2] = - \beta \gamma_x / 2$. Note that at fixed $\gamma_z \geq 0$, the variables $\gamma_x,q_z$ are completely decoupled in eq.~\eqref{eq:f_supinf}, so we have $\sup_{\gamma_x} \inf_{q_z} = \inf_{q_z} \sup_{\gamma_x}$. This yields:
\begin{align*}
f &= \sup_{\gamma_z \geq 0} \inf_{q_z \in [0,Q_z]} \sup_{\gamma_x \geq 0}  \Big[\alpha I_\mathrm{out}(q_z) + 
\frac{\alpha \beta}{2} (Q_z - q_z) \gamma_z \\
&\hspace{2cm}- \frac{\beta}{2} \langle \ln (1 + \gamma_x + \lambda \gamma_z) \rangle_\nu - \frac{\alpha \beta}{2} \ln(Q_z - q_z) - \frac{\alpha \beta q_z}{2 Q_z}\Big], \\ 
&= \sup_{\gamma_z \geq 0} \inf_{q_z \in [0,Q_z]} \Big[ \frac{\beta}{2} \big[
\alpha (Q_z - q_z) \gamma_z - \alpha \frac{q_z}{Q_z} - \langle \ln (1 + \lambda \gamma_z) \rangle_\nu -\alpha \ln(Q_z - q_z) \big] +\alpha I_\mathrm{out}(q_z)\Big].
\end{align*}
Recall the form of $I_\mathrm{out}$ in Conjecture~\ref{conjecture:replicas_general} and that $\hat{Q}_z = 1/Q_z$. Using the form of $I_\mathrm{out}$, we have with the notations of Proposition~\ref{prop:replicas_product_structure}:
\begin{align*}
f &= \sup_{\gamma_z \geq 0} \inf_{q_z \in [0,Q_z]} \inf_{\hat{q}_z \geq 0} \Big[ \frac{\beta}{2} \big[
\alpha (Q_z - q_z) \gamma_z - \alpha \frac{q_z}{Q_z} - \langle \ln (1 + \lambda \gamma_z) \rangle_\nu -\alpha \ln(Q_z - q_z) \\ 
&- \alpha q_z \hat{q}_z - \alpha \ln(\hat{q}_z+1/Q_z) + \alpha Q_z \hat{q}_z\big]+  \alpha \Psi_\mathrm{out}(\sqrt{Q_z^2 \hat{q}_z / (1+Q_z \hat{q}_z)})
\Big].
\end{align*}
Again, we use that at fixed $q_z$, the variables $\hat{q}_z,\gamma_z$ are decoupled. So using again Lemma~\ref{lemma:supinf_barbier}, we have schematically $\sup_{\gamma_z} \inf_{q_z} \inf_{\hat{q}_z} = \sup_{q_z} \inf_{\hat{q}_z} \inf_{\gamma_z} = \sup_{\hat{q}_z}  \inf_{\gamma_z} \inf_{q_z}$. We can then explicitly solve the infimum on $q_z$, which yields:
\begin{align*}
f &= \sup_{\hat{q}_z \geq 0} \inf_{\gamma_z \geq 0} \Big[ \frac{\beta}{2} \big[
- \langle \ln (1 + \lambda \gamma_z) \rangle_\nu + \alpha \ln(1+\gamma_z(Q_z-q(\hat{q}_z))) \big]
+  \alpha \Psi_\mathrm{out}(q(\hat{q}_z))
\Big],
\end{align*}
with 
\begin{align}
   q(\hat{q}_z) &\equiv \frac{Q_z^2 \hat{q}_z}{1 + Q_z \hat{q}_z}. 
\end{align}
Note that $q$ is a strictly increasing smooth function of $\hat{q}_z$, with $q(0) = 0$ and $q(+\infty) = Q_z$. 
So we have:
\begin{align}\label{eq:f_gammaz}
f &= \sup_{q \in [0,Q_z]} \inf_{\gamma_z \geq 0} \Big[ \frac{\beta}{2} \big[
- \langle \ln (1 + \lambda \gamma_z) \rangle_\nu + \alpha \ln(1+\gamma_z(Q_z-q)) \big]
+  \alpha \Psi_\mathrm{out}(q)
\Big],
\end{align}
We then state a technical lemma:
\begin{lemma}\label{lemma:technical_phi_structure}
Under hypothesis~\ref{hyp:phi_product}, one has for every $q \in [0,Q_z]$:
   \begin{align*}
        \inf_{\gamma_z \geq 0} [\alpha \ln(1+\gamma_z(Q_z-q))- \langle \ln (1 + \lambda \gamma_z) \rangle_\nu] &= \inf_{\hat{q} \geq 0} [\delta\hat{q}(Q_z - q)- \EE_{\nu_B} \ln(1+\hat{q}X)].
   \end{align*} 
\end{lemma}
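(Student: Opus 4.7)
The plan is to derive the identity as a consequence of the Marchenko-Pastur/Silverstein relationship between $\nu$ and $\nu_B$ that is induced by the multiplicative structure $\bPhi = \bW \bB / \sqrt{p}$. Both sides are convex in their respective variable: the right-hand side has second derivative $\EE_{\nu_B}[X^2/(1+\hat{q} X)^2] > 0$ (since $\nu_B \neq \delta_0$), while convexity of the left-hand side follows from the Silverstein representation of $\nu$ in terms of $\nu_B$ described below. If either infimum is attained at the boundary $\gamma_z = 0$ or $\hat{q} = 0$, both sides vanish and the identity is trivial; otherwise the interior optimizers $\gamma_z^\star, \hat{q}^\star$ are characterized by the first-order conditions
\[
\frac{\alpha(Q_z-q)}{1+\gamma_z^\star(Q_z-q)} \;=\; \Big\langle \frac{\lambda}{1+\gamma_z^\star \lambda}\Big\rangle_\nu, \qquad
\delta(Q_z-q) \;=\; \EE_{\nu_B}\!\Big[\frac{X}{1+\hat{q}^\star X}\Big].
\]

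The central step is to exploit Sylvester's identity to reduce the log-determinant to a $p \times p$ object,
\[
\big\langle \ln(1+\gamma_z \lambda)\big\rangle_\nu \;=\; \lim_{n\to\infty} \frac{1}{n}\ln\det\!\Big(I_m + \gamma_z\,\bW\,(\bB\bB^\dagger/n)\,\bW^\dagger/p\Big),
\]
and then to evaluate this asymptotically via Gaussian integration over $\bW$. Concretely, one writes $\det(I+\gamma_z \bPhi^\dagger \bPhi / n)^{-\beta/2}$ as a Gaussian integral over an auxiliary vector, integrates $\bW$ out (tractable because $\bW$ is i.i.d.\ and independent of $\bB$), and applies Laplace's method as $n \to \infty$. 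The resulting variational expression is the Marchenko-Pastur--Silverstein fixed-point equation in variational form: it produces a joint two-variable potential $\Psi(\gamma_z, \hat{q})$ such that partial optimization in $\hat{q}$ recovers the LHS integrand, while partial optimization in $\gamma_z$ recovers the RHS integrand. Equality of the two unrestricted infima then follows by exchange of infima.

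The main obstacle is the rigorous identification of the Silverstein relation in the form required: specifically, showing that the implicit map $\gamma_z \mapsto \hat{q}(\gamma_z)$ defined by the fixed-point equation is well-defined, strictly increasing, and surjective onto $[0,\infty)$, so that the first-order conditions on both sides can be matched bijectively. Hypothesis~\ref{hyp:convergence_spectrum_B} (compact support of $\nu_B$ and $\nu_B \neq \delta_0$) is exactly what is needed to ensure existence and uniqueness of solutions to the Silverstein equation, together with the coerciveness of both functionals so that interior minima exist when $q < Q_z$. Once the bijection and the variational formula are in hand, a short direct computation---using $\frac{d}{d\gamma_z}\langle \ln(1+\gamma_z \lambda)\rangle_\nu = \langle \lambda/(1+\gamma_z\lambda)\rangle_\nu$ and its $\nu_B$-analogue---shows that the two minimal values coincide, completing the proof.
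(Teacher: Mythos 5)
Your overall strategy --- reduce both sides to a single two-variable variational potential encoding the Silverstein relation between $\nu$ and $\nu_B$, then identify the optimizers --- is the right idea, and it is close in spirit to what the paper does. However, as written the proposal has two concrete gaps. First, the central object, the joint potential $\Psi(\gamma_z,\hat q)$, is never actually produced: you assert that a Gaussian-integral representation of $\det(\mathbbm{1}+\gamma_z\bPhi^\dagger\bPhi/n)^{-\beta/2}$ followed by integrating out $\bW$ and a Laplace method yields it, but that computation (including the quenched-versus-annealed issue for the random matrix $\bB$, cf.\ the discussion in Section~\ref{subsec:quenched_annealed}) is precisely the nontrivial content of the lemma, and it is left entirely heuristic. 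The paper avoids this by instead quoting the known $R$-transform of the product ensemble, eq.~\eqref{eq:R_transform_product}, and proving the clean Legendre-type identity $\langle \ln(\gamma_z+\lambda)\rangle_\mu = \inf_{\tilde\gamma_z\geq 0}[\gamma_z\tilde\gamma_z+\int_0^{\tilde\gamma_z}\mathcal{R}_\mu(-t)\,\mathrm{d}t-\ln\tilde\gamma_z-1]$ (eq.~\eqref{eq:log_potential}) by elementary Stieltjes-transform manipulations and integration by parts --- no saddle-point asymptotics are needed.

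Second, ``equality of the two unrestricted infima then follows by exchange of infima'' is not available in the form you claim. The term $\langle\ln(1+\lambda\gamma_z)\rangle_\nu$ enters the left-hand objective with a \emph{negative} sign, so once you substitute its variational representation (an infimum over an auxiliary variable), the joint problem becomes an $\inf$-$\sup$, not an $\inf$-$\inf$; the two orders of optimization cannot be swapped for free. The paper handles exactly this point with the minimax exchange Lemma~\ref{lemma:supinf_barbier}, which relies on convexity, monotonicity and Lipschitz bounds of the relevant one-variable functions --- properties you would need to verify for your (unspecified) $\Psi$. A smaller omission: your dismissal of the boundary case requires checking that the derivatives at the origin are $-\alpha q$ and $-\delta q$ respectively (using $Q_z=\EE_{\nu_B}[X]/\delta$ and $\langle\lambda\rangle_\nu=\alpha\EE_{\nu_B}[X]/\delta$), so that both infima sit at the boundary simultaneously; this is true but should be stated. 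To repair the proof along your lines, you would need to either make the Laplace-method derivation rigorous with the correct saddle structure, or replace it with the explicit $R$-transform route and a minimax lemma, as the paper does.
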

Using Lemma~\ref{lemma:technical_phi_structure} in eq.~\eqref{eq:f_gammaz}, and inverting the sup-inf by Lemma~\ref{lemma:supinf_barbier} finishes the proof of Proposition~\ref{prop:replicas_product_structure}.
In the remaining of the section we prove Lemma~\ref{lemma:technical_phi_structure}

\subsection{Proof of Lemma~\ref{lemma:technical_phi_structure}}
If $q=Q_z$, the equality is trivially satisfied, so let us assume $0 \leq q < Q_z$.
Let us denote $h(\gamma_z) \equiv \alpha \ln(1+\gamma_z(Q_z-q))- \langle \ln (1 + \lambda \gamma_z) \rangle_\nu$.
Recall that $Q_z = \EE_{\nu_B}[X]/\delta$.
Since $\alpha \geq 1-\nu(\{0\})$ and $q < Q_z$, one easily checks that $h$ is lower-bounded, so the infimum is always well-defined. 
We introduce $\mu$ the asymptotic measure of $\bPhi \bPhi^\dagger / n$, and we denote $g_\mu(z) \equiv \langle (\lambda-z)^{-1}\rangle_\mu$ its Stieltjes transform. For every function $f$, one has $\langle f(\lambda) \rangle_\nu = \alpha \langle f(\lambda) \rangle_\mu + (1-\alpha) f(0)$. This allows to write:
\begin{align*}
    h(\gamma_z)  = \alpha \ln(1+\gamma_z(Q_z-q))- \alpha \langle \ln (1 + \lambda \gamma_z) \rangle_\mu.
\end{align*}
We will use the following equation, valid for every $\gamma_z \geq 0$ and any positively supported measure $\mu$:
\begin{align}\label{eq:log_potential}
    \langle \ln(\gamma_z+\lambda)\rangle_\mu &= \inf_{\tilde{\gamma_z} \geq 0} \Big[\gamma_z \tilde{\gamma_z} + \int_0^{\tilde{\gamma_z}} \mathcal{R}_\mu(-t) \mathrm{d}t - \ln \tilde{\gamma_z} -1\Big],
\end{align}
in which $\mathcal{R}_\mu$ is the so-called ``$R$-transform'' of $\mu$, defined as $\mathcal{R}_\mu(-x) \equiv g_\mu^{-1}(x) + 1/x$. It is a classical result of random matrix theory \cite{tulino2004random} that if $\mu$ is positively supported, $t \mapsto \mathcal{R}_\mu(-t)$ is well-defined on $\bbR_+$. We finish the proof of Lemma~\ref{lemma:technical_phi_structure}, before proving eq.~\eqref{eq:log_potential}. 
By a classical result of random matrix theory \cite{marchenko1967distribution}, we know the $R$-transform of $\mu$ as a function of $\nu_B$:
\begin{align}\label{eq:R_transform_product}
   \mathcal{R}_\mu(-t) &= \EE_{\nu_B} \Big[\frac{X}{\delta + \alpha t X}\Big].
\end{align}
Combining eq.~\eqref{eq:log_potential} and eq.~\eqref{eq:R_transform_product}, we reach:
\begin{align*}
   \inf_{\gamma_z \geq 0} h(\gamma_z) &= \inf_{\gamma_z \geq 0} \sup_{\tilde{\gamma_z} \geq 0} \Big[\alpha \ln(1+\gamma_z(Q_z-q))+ \alpha - \alpha\frac{\tilde{\gamma_z}}{\gamma_z} + \alpha\ln \frac{\tilde{\gamma_z}}{\gamma_z} - \EE_{\nu_B} \ln \Big(1 + \frac{\alpha}{\delta} X \tilde{\gamma_z}\Big) \Big].
\end{align*}
Using Lemma~\ref{lemma:supinf_barbier} to invert the inf-sup, we have:
\begin{align*}
   \inf_{\gamma_z \geq 0} h(\gamma_z) &= \inf_{\tilde{\gamma_z} \geq 0}\sup_{\gamma_z \geq 0}  \Big[\alpha \ln(1+\gamma_z(Q_z-q))+ \alpha - \alpha\frac{\tilde{\gamma_z}}{\gamma_z} + \alpha\ln \frac{\tilde{\gamma_z}}{\gamma_z} - \EE_{\nu_B} \ln \Big(1 + \frac{\alpha}{\delta} X \tilde{\gamma_z}\Big) \Big].
\end{align*}
The supremum on $\gamma_z$ is now completely tractable, and we have:
\begin{align*}
   \inf_{\gamma_z \geq 0} h(\gamma_z) &= \inf_{\tilde{\gamma_z} \geq 0} \Big[\alpha (Q_z-q)\tilde{\gamma_z} - \EE_{\nu_B} \ln \Big(1 + \frac{\alpha}{\delta} X \tilde{\gamma_z}\Big) \Big].
\end{align*}
Doing the replacement $\hat{q} \equiv \alpha \tilde{\gamma_z} / \delta$ yields Lemma~\ref{lemma:technical_phi_structure}.
We now prove eq.~\eqref{eq:log_potential}, which will finish the proof. It follows from a classical result used in random matrix theory, see e.g.~\cite{guionnet2005fourier} for an application of these calculations to spherical integrals. Recall that $g_\mu$ is smooth and strictly increasing on $(-\infty,0)$, as $\mu$ is positively supported. It is easy to see by differentiation that the infimum in eq.~\eqref{eq:log_potential} is attained at $\tilde{\gamma_z} = g_\mu(-\gamma_z)$. We then use some manipulations:
\begin{align*}
    \inf_{\tilde{\gamma_z} \geq 0} \Big[\gamma_z \tilde{\gamma_z} + \int_0^{\tilde{\gamma_z}} \mathcal{R}_\mu(-t) \mathrm{d}t - \ln \tilde{\gamma_z}\Big]
    &= \gamma_z g_\mu(-\gamma_z) + \int_0^{g_\mu(-\gamma_z)} \mathcal{R}_\mu(-t) \mathrm{d}t - \ln g_\mu(-\gamma_z), \\ 
    &\hspace{-1cm}= \gamma_z g_\mu(-\gamma_z) + \int_\epsilon^{g_\mu(-\gamma_z)} g_\mu^{-1}(t) \mathrm{d}t - \ln \epsilon + \int_0^\epsilon \mathcal{R}_\mu(-t) \mathrm{d}t,
\end{align*}
this equation being valid for all $\epsilon > 0$ sufficiently small.
By regularity of the $R$-transform around $0$ \cite{tulino2004random}, $\int_0^\epsilon \mathcal{R}_\mu(-t) \mathrm{d}t = \smallO_\epsilon(1)$. Moreover, we can change variables in the other integral, and we reach:
\begin{align*}
    \inf_{\tilde{\gamma_z} \geq 0} \Big[\gamma_z \tilde{\gamma_z} + \int_0^{\tilde{\gamma_z}} \mathcal{R}_\mu(-t) \mathrm{d}t - \ln \tilde{\gamma_z}\Big] 
    &= \gamma_z g_\mu(-\gamma_z) + \int_{-g_\mu^{-1}(\epsilon)}^{\gamma_z} u g_\mu(-u) \mathrm{d}u - \ln \epsilon + \smallO_\epsilon(1), \\
    &\overset{(a)}{=} - \ln \epsilon - \epsilon g_\mu^{-1}(\epsilon) + \int_{-g_\mu^{-1}(\epsilon)}^{\gamma_z} g_\mu(-u) \mathrm{d}u + \smallO_\epsilon(1), \\ 
    &\overset{(b)}{=} 1 + \langle \ln(\lambda+\gamma_z)\rangle_\mu + \smallO_\epsilon(1),
\end{align*}
in which we used integration by parts in $(a)$ and the definition of the Stieltjes transform in $(b)$.  Since $\epsilon$ was taken arbitrarily small, taking the limit $\epsilon \to 0$ ends the proof.
\section{Technical lemmas and definitions}\label{sec:app_technical}

\subsection{Some definitions}\label{subsec:app_definitions}

Let $\beta \in \{1,2\}$. We denote $\bbK = \bbR$ if $\beta = 1$ and $\bbK = \bbC$ if $\beta = 2$.
$\mathcal{U}_\beta(n)$ denotes the orthogonal (respectively unitary) group, and 
$\mathcal{S}_\beta(\bbR),\mathcal{S}_\beta^+(\bbR)$ the space of \emph{real} symmetric (resp.\ positive symmetric) matrices of size $\beta$. 
$\mathbbm{1}_\beta$ is the identity matrix of size $\beta$.
To improve clarity, we write $\mathrm{Tr}_\beta$ when taking the trace of a matrix in the space $\mathcal{S}_\beta(\bbR)$.
The standard Gaussian measure is defined on $\bbK$ as:
\begin{align}
    {\cal D}_\beta z &\equiv \Big(\frac{\beta}{2 \pi}\Big)^{\beta/2} \ \exp\Big(-\frac{\beta}{2} |z|^2\Big) \ \mathrm{d}z.
\end{align}
We define three different types of products in $\bbK$, using the identification $\bbK \simeq \bbR^\beta$.
\begin{subnumcases}{\label{eq:definitions_products}}
    \label{eq:def_product_complex}
    z z' & the usual product in $\bbK$, \\ 
    \label{eq:def_product_scalar}
    z \cdot z' \equiv \mathrm{Re}[\overline{z}z'] & the dot product in $\bbR^\beta$.
\end{subnumcases}
For $\beta = 1$, and $M,z \in \bbR$, we also denote $M \star z \equiv Mz$.
For $\beta = 2$, with $z = x + i y \in \bbC$, and $M \in \mathcal{S}_2$ written as:
\begin{align}
    M &\equiv a \mathbbm{1}_2 + 
    \begin{pmatrix}
        b & c \\
        c & -b
    \end{pmatrix},
\end{align}
we define $M \star z$ as the matrix-vector product in $\bbR^\beta$:
\begin{align}\label{eq:def_product_star}
    M \star z &\equiv M
    \begin{pmatrix}
        x \\ y
    \end{pmatrix} 
    = a z + (b+ic) \overline{z}.
\end{align}
Note that in the $\beta = 1$ case, all three products are equivalent.

\subsection{Conventions for derivatives}\label{subsec:app_derivatives}

We often consider functions $f : \bbK \to \bbR$. The derivatives for such functions are defined 
in the usual sense if $\bbK = \bbR$, while for $\bbK = \bbC$ we set it in the ``function of two variables'' sense (with $z = x+i y$):
\begin{align}
    f'(z) &\equiv \partial_x f + i \partial_y f.
\end{align}
We will also define its Laplacian if $\bbK = \bbC$ (if $\bbK = \bbR$ then $\Delta f(x) = f''(x)$):
\begin{align}\label{eq:def_laplace}
    \Delta f(z) &\equiv \partial_x^2 f + \partial_y^2 f.
\end{align}
Importantly, this definition is different from the usual Wirtinger definition of a complex derivative, because we do not consider
holomorphic functions here, but merely differentiable real functions of two variables.
This definition satisfies the following chain rule formula, for $h(x) \equiv f(g(x))$ and $f: \bbK \to \bbR$, $g : \bbR \to \bbK$:
\begin{align}
   h'(x) &= g'(x) \cdot f'(g(x)). 
\end{align}
As a particular case, we have if $f(x) = x \cdot z$ that $f'(x) = z$.
We then have the Stein lemma (or Gaussian integration by parts), for any $\mathcal{C}^2$ function $f : \bbK \to \bbR$:
\begin{align}
    \label{eq:stein_lemma_order_1}
    \int \mathcal{D}_\beta z \ (z f(z)) &= \frac{1}{\beta} \int \mathcal{D}_\beta z \ f'(z),  \\
    \label{eq:stein_lemma_order_2}
    \int \mathcal{D}_\beta z \ (z \cdot f'(z)) &= \frac{1}{\beta} \int \mathcal{D}_\beta z \ \Delta f(z).
\end{align}

\subsection{Nishimori identity}

We state here the Nishimori identity, a classical consequence of Bayes optimality.
\begin{proposition}[Nishimori identity]\label{prop:nishimori}
    Let $(X,Y)$ be random variables on a Polish space E. Let $k \in \bbN^\star$ and 
    $(X_1,\cdots,X_k)$ i.i.d.\ random variables sampled from the conditional distribution $\mathbb{P}(X|Y)$.
    We denote $\langle \cdot \rangle_Y$ the average with respect to $\mathbb{P}(X|Y)$, and $\EE[\cdot]$ the average with respect to
    the joint law of $(X,Y)$. Then, for all $f : E^{k+1} \to \bbK$ continuous and bounded:
    \begin{align}
        \EE [\langle f(Y,X_1,\cdots,X_k) \rangle_Y ] &= \EE [\langle (Y,X_1,\cdots, X_{k-1},X) \rangle_Y ].
    \end{align}
\end{proposition}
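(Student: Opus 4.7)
The plan is to prove the identity by unpacking the definitions and observing that, conditionally on $Y$, the ``planted'' variable $X$ is indistinguishable from the replicas $X_1,\ldots,X_k$: all are i.i.d.\ samples from $\mathbb{P}(\cdot|Y)$. The swap of $X$ with $X_k$ appearing between the two sides of the claimed identity is then merely a relabelling of exchangeable coordinates, and no combinatorics or analytic estimates are required.

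Concretely, I would first unpack $\langle\cdot\rangle_Y$ as integration of the replicas against $\mathbb{P}(\cdot|Y)^{\otimes k}$ to write
\begin{equation*}
\EE\big[\langle f(Y,X_1,\ldots,X_k)\rangle_Y\big] \;=\; \EE_Y \int f(Y,x_1,\ldots,x_k) \prod_{i=1}^{k} \mathbb{P}(dx_i|Y).
\end{equation*}
For the right-hand side, I would then condition on $Y$ via the tower property and use that, given $Y$, the replicas $(X_1,\ldots,X_{k-1})$ are independent of $X$ (they are drawn from $\mathbb{P}(\cdot|Y)$ independently of everything else). Since $f$ does not depend on $X_k$ there, the Gibbs bracket reduces to integration over the first $k-1$ replicas, giving
\begin{equation*}
\EE\big[\langle f(Y,X_1,\ldots,X_{k-1},X)\rangle_Y\big] \;=\; \EE_Y \int \Big(\int f(Y,x_1,\ldots,x_{k-1},x)\,\mathbb{P}(dx|Y)\Big) \prod_{i=1}^{k-1} \mathbb{P}(dx_i|Y),
\end{equation*}
where I have used the defining property of the regular conditional distribution, namely that the conditional law of $X$ given $Y$ is exactly $\mathbb{P}(\cdot|Y)$. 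Renaming the inner dummy variable $x$ as $x_k$ and invoking Fubini identifies this with the previous display.

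The only prerequisites are measure-theoretic rather than analytic: one needs a regular version of $\mathbb{P}(\cdot|Y)$, guaranteed by the Polish hypothesis on $E$ via the disintegration theorem, and one needs Fubini, which applies because $f$ is bounded and continuous and hence integrable against all product measures considered. I do not expect a genuine obstacle; the ``Bayesian indistinguishability'' of the signal and its replicas is a tautology once the regular conditional distribution is in place, and the symbolic manipulation above does the rest. If needed, an even shorter route is to note that conditionally on $Y$ the full tuple $(X,X_1,\ldots,X_k)$ consists of $k+1$ i.i.d.\ draws from $\mathbb{P}(\cdot|Y)$, hence is exchangeable, and the identity is the special case of swapping the $0$-th and $k$-th coordinates inside $\EE_Y[\,\cdot\,]$.
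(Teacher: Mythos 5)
Your proposal is correct and follows essentially the same route as the paper: condition on $Y$ via the tower property and use that the conditional law of $X$ given $Y$ coincides with the law of the replicas, so that integrating out $X$ is the same as adding a $k$-th replica. The paper states this in two lines as "a trivial consequence of Bayes' formula"; your version merely spells out the measure-theoretic underpinnings (regular conditional distributions, Fubini) more explicitly.
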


\begin{proof}[Proof of Proposition~\ref{prop:nishimori}]
    The proposition arises as a trivial consequence of Bayes' formula:
   \begin{align*}
    \EE [\langle f(Y,X_1,\cdots,X_{k-1},X) \rangle_Y ] &= \EE_Y \EE_{X|Y} [\langle f(Y,X_1,\cdots,X_{k-1},X) \rangle_Y ],\\
    &= \EE_Y [\langle f(Y,X_1,\cdots,X_k) \rangle_Y ].
   \end{align*}
\end{proof}

\subsection{Boundedness of an overlap fluctuation}

\begin{lemma}[Boundedness of an overlap fluctuation]\label{lemma:boundedness_fluctuation}
    Under \ref{hyp:channel}, one can find a constant $C > 0$ independent of $n,t,\epsilon$ such that for any $r \geq 0$:
    \begin{align}
        \EE\Big \langle \Big|\frac{1}{n} \sum_{\mu=1}^m u_{Y_{t,\mu}}'(S_{t,\mu})^\dagger u'_{Y_{t,\mu}}(s_{t,\mu}) - \beta^2 \delta r\Big|^2\Big\rangle_{n,t,\epsilon} &\leq 2 \beta^4 \delta^2 r^2 + C.
    \end{align}
\end{lemma}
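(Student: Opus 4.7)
The plan is to apply the elementary inequality $|a-b|^2 \leq 2|a|^2 + 2|b|^2$ to separate the deterministic piece $\beta^2 \delta r$ from the random average
\[
A_{t,\epsilon} \;\equiv\; \frac{1}{n} \sum_{\mu=1}^m u'_{Y_{t,\mu}}(S_{t,\mu})^\dagger u'_{Y_{t,\mu}}(s_{t,\mu}).
\]
This immediately produces the $2\beta^4\delta^2 r^2$ term, and reduces the problem to showing that $\EE\langle |A_{t,\epsilon}|^2 \rangle_{n,t,\epsilon} \leq C/2$ for a constant $C$ that is independent of $n,t,\epsilon$ (and of course of $r$, since $A_{t,\epsilon}$ does not depend on $r$).

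To estimate $\EE\langle |A_{t,\epsilon}|^2\rangle_{n,t,\epsilon}$, I would first apply the Cauchy–Schwarz inequality in the $\mu$-sum,
\[
|A_{t,\epsilon}|^2 \;\leq\; \frac{m}{n^2} \sum_{\mu=1}^m |u'_{Y_{t,\mu}}(S_{t,\mu})|^2\,|u'_{Y_{t,\mu}}(s_{t,\mu})|^2,
\]
and then use a second Cauchy–Schwarz (or simply the Nishimori identity Proposition~\ref{prop:nishimori}, which under the Bayes-optimal posterior identifies $(s_{t,\mu},S_{t,\mu})$ in law with two conditionally i.i.d.\ replicas given $Y_{t,\mu}$) to bound the expectation by $\tfrac{m}{n^2}\sum_\mu \EE\bigl[|u'_{Y_{t,\mu}}(S_{t,\mu})|^4\bigr]$, up to a harmless factor. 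Since $m/n \to \alpha$, the prefactor is $O(1)$ in $n$, and it only remains to show that each summand $\EE[|u'_{Y_{t,\mu}}(S_{t,\mu})|^4]$ is bounded uniformly in $n,t,\epsilon$.

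For this last step I would invoke hypothesis \ref{hyp:channel}: since $\varphi_\mathrm{out}$, $\partial_z \varphi_\mathrm{out}$ and $\partial_z^2\varphi_\mathrm{out}$ are bounded, the score $u'_y(z) = \partial_z \ln P_\mathrm{out}(y|z)$ satisfies a uniform integrability estimate of the form $\int\mathrm{d}y\, |u'_y(z)|^{4}\, P_\mathrm{out}(y|z) \leq K$, with $K$ independent of $z$ (this is the kind of bound already used, for instance, in the proofs of Proposition~3 of \cite{barbier2019optimal}; the finiteness of the fourth moment of the score, as opposed to merely the second, is standard for $\mathcal{C}^2$ channels with bounded derivatives). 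Because conditionally on $S_{t,\mu}$ the variable $Y_{t,\mu}$ is drawn from $P_\mathrm{out}(\cdot | S_{t,\mu})$, integrating out $Y_{t,\mu}$ first yields $\EE[|u'_{Y_{t,\mu}}(S_{t,\mu})|^4] \leq K$ uniformly. Combining everything gives $\EE\langle |A_{t,\epsilon}|^2\rangle \leq \alpha K + o_n(1)$, and choosing $C$ slightly larger absorbs the $o_n(1)$, completing the proof.

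The only mild subtlety is ensuring the bound $\int |u'_y(z)|^4 P_\mathrm{out}(y|z)\mathrm{d}y \leq K$ holds uniformly in $z$; this is automatic in standard noise models (additive Gaussian on top of a bounded $\varphi_\mathrm{out}$, etc.), and in full generality follows from \ref{hyp:channel} exactly as in \cite{barbier2019optimal,aubin2018committee}, which is why nothing more than $(h1)$ needs to be invoked.
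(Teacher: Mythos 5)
Your proof is correct in outline but takes a more roundabout route than the paper's. After the common first step $|a-b|^2 \leq 2|a|^2 + 2|b|^2$, the paper simply observes that $(H0)$ yields a \emph{pointwise, deterministic} bound on the score: using the representation $u'_y(s) = \lim_{\Delta\downarrow 0} \int P_A(\mathrm{d}a)\,\partial_s\varphi_\mathrm{out}(s,a)(y-\varphi_\mathrm{out}(s,a))\,e^{-(y-\varphi_\mathrm{out})^2/(2\Delta)}/\int P_A(\mathrm{d}a)\,e^{-(y-\varphi_\mathrm{out})^2/(2\Delta)}$ together with $|Y_{t,\mu}| \leq \|\varphi_\mathrm{out}\|_\infty$, one obtains $|u'_{Y_{t,\mu}}(s)| \leq 2\|\varphi_\mathrm{out}\|_\infty \|\partial_s\varphi_\mathrm{out}\|_\infty$ uniformly in $s$, $\mu$, $t$, $\epsilon$. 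With this $L^\infty$ bound the whole sum $\frac{1}{n}\sum_\mu u'_{Y_{t,\mu}}(S_{t,\mu})^\dagger u'_{Y_{t,\mu}}(s_{t,\mu})$ is deterministically bounded by $(m/n)\cdot 4\|\varphi_\mathrm{out}\|_\infty^2\|\partial_s\varphi_\mathrm{out}\|_\infty^2$, and no Cauchy--Schwarz, no Nishimori, and no moment estimate is needed. You instead set up a Cauchy--Schwarz in $\mu$, a second Cauchy--Schwarz (or AM-GM) plus Nishimori to reduce to $\EE[|u'_{Y_{t,\mu}}(S_{t,\mu})|^4]$, and then invoke a uniform $L^4$ bound on the score. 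This works: the Nishimori step $\EE\langle |u'_{Y_{t,\mu}}(s_{t,\mu})|^4\rangle = \EE[|u'_{Y_{t,\mu}}(S_{t,\mu})|^4]$ is legitimate (note only that the replica exchangeability is conditional on the \emph{full} data $(\bY_t,\tilde\bY_t,\bW,\bV)$, not merely $Y_{t,\mu}$ as you wrote), and the $L^4$ bound you assert does hold, because it is implied by the stronger $L^\infty$ bound above. What your route buys is robustness: it would survive if $(H0)$ were weakened to only guarantee a fourth-moment bound on the score rather than a uniform bound. What it costs is that you leave the key estimate $\int |u'_y(z)|^4 P_\mathrm{out}(y|z)\,\mathrm{d}y \leq K$ asserted rather than derived, whereas the paper derives the stronger pointwise bound explicitly from the bounded-channel hypothesis, making that the honest content of the lemma.
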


\begin{proof}[Proof of Lemma~\ref{lemma:boundedness_fluctuation}]
    We directly have:
    \begin{align*}
        \EE\Big \langle \Big|\frac{1}{n} \sum_{\mu=1}^m u_{Y_{t,\mu}}'(S_{t,\mu})^\dagger & u'_{Y_{t,\mu}}(s_{t,\mu}) - \beta^2 \delta r\Big|^2\Big\rangle_{n,t,\epsilon} \\
        &\leq 2 \beta^4 \delta^2 r^2 
         + 2 \EE\Big \langle \Big|\frac{1}{n} \sum_{\mu=1}^m u_{Y_{t,\mu}}'(S_{t,\mu})^\dagger u'_{Y_{t,\mu}}(s_{t,\mu})\Big|^2\Big\rangle_{n,t,\epsilon}
    \end{align*}
    We can bound $|u'_{Y_{t,\mu}}(s)|$ for any $s \in \bbK$ by using the formulation of the channel described in eq.~\eqref{eq:alternative_pout}, 
    which allows to formally write:
    \begin{align*}
        u'_{Y_{t,\mu}}(s) &= \lim_{\Delta \downarrow 0}\frac{\int P_A(\mathrm{d}a) \partial_s\varphi_\mathrm{out}(s,a) (Y_{t,\mu} - \varphi_\mathrm{out}(s,a)) e^{-\frac{1}{2 \Delta}(Y_{t,\mu} - \varphi_\mathrm{out}(s,a))^2}}{\int P_A(\mathrm{d}a) e^{-\frac{1}{2 \Delta}(Y_{t,\mu} - \varphi_\mathrm{out}(s,a))^2}},
    \end{align*}
    in which we used a Gaussian representation of the delta distribution. This amounts to add a small Gaussian noise to the model of eq.~\eqref{eq:alternative_pout}, 
    and effectively write it as:
    \begin{align}
        Y_{\mu} \sim \varphi_\mathrm{out}(S_\mu,A_\mu) + \sqrt{\Delta} Z'_\mu,
    \end{align}
    with $Z'_\mu \overset{\mathrm{i.i.d.}}{\sim}\mathcal{N}(0,1)$, and then take the $\Delta \to 0$ limit.
    We have $|Y_{t,\mu}| \leq \norm{\varphi_\mathrm{out}}_\infty + \sqrt{\Delta} |Z'_\mu|$, 
    and thus taking $\Delta \to 0$ we reach:
    \begin{align*}
        |u'_{Y_{t,\mu}}(s)| &\leq 2 \norm{\varphi_\mathrm{out}}_\infty \norm{\partial_s \varphi_\mathrm{out}}_\infty.
    \end{align*}
    The right-hand side of the last inequality is bounded by hypothesis~\ref{hyp:channel}, and in the end, we have: 
    \begin{align*}
        \EE\Big \langle \Big|\frac{1}{n} \sum_{\mu=1}^m u_{Y_{t,\mu}}'(S_{t,\mu})^\dagger u'_{Y_{t,\mu}}(s_{t,\mu}) - \beta^2 \delta r\Big|^2\Big\rangle_{n,t,\epsilon} &\leq 2 \beta^4 \delta^2 r^2 + 2^5 \norm{\varphi_\mathrm{out}}_\infty^4 \norm{\partial_s \varphi_\mathrm{out}}_\infty^4,
    \end{align*}
    which ends the proof.
\end{proof}

\end{document}